\documentclass[12pt, a4paper]{amsart}

\usepackage[hmargin=30mm, vmargin=25mm, includefoot, twoside]{geometry}
\usepackage[bookmarksopen=true]{hyperref}

\usepackage{amsfonts,amssymb,verbatim}
\usepackage{latexsym}
\usepackage{mathrsfs}
\usepackage{stmaryrd}
\usepackage{xspace}
\usepackage{enumerate, paralist}
\usepackage{graphicx}
\usepackage[all]{xy}
\usepackage{extarrows}
\usepackage{enumitem}

\usepackage[usenames,dvipsnames]{color}

\usepackage{txfonts, pxfonts}

\usepackage{amsthm}
\usepackage{amsmath}

\newtheorem{thm}{Theorem}[section]
 \newtheorem{cor}[thm]{Corollary}
 \newtheorem{lem}[thm]{Lemma}
 \newtheorem{prop}[thm]{Proposition}

\numberwithin{equation}{section}

 \theoremstyle{definition}
  \newtheorem{defn}[thm]{Definition}
 \theoremstyle{remark}
 \newtheorem{rem}[thm]{Remark}
 \newtheorem{ex}[thm]{Example}

\def\R{\mathbb{R}}
\def\N{\mathbb{N}}
\def\B{\mathfrak{B}}
\def\Id{\mathrm{I}}
\def\P{\mathcal{P}}
\def\F{\mathcal{F}}
\def\Nd{\mathcal{N}}

\def\Dom{\{0\} \cup [1,\infty]}
\def\dom{\{0\} \cup [1,\infty)}

\def\lp{\ell^p_E(X)}
\def\lo{\ell^1_E(X)}
\def\lz{\ell^0_E(X)}
\def\linf{\ell^\infty_E(X)}

\def\lpz{\ell^p_{E,0}(X)}

\def\linfz{\ell^\infty_{E,0}(X)}

\def\andx{\quad\mbox{~and~}\quad}
\def\ppg{\mathrm{prop}}
\def\supp{\mathrm{supp}}

\def\Kp{\mathcal{K}_E^p(X)}
\def\Lp{\mathcal{L}_E^p(X)}
\def\Sp{\mathcal{S}_E^p(X)}
\def\Ap{\mathcal{A}_E^p(X)}

\def\Kinf{\mathcal{K}_E^\infty(X)}
\def\Linf{\mathcal{L}_E^\infty(X)}
\def\Sinf{\mathcal{S}_E^\infty(X)}
\def\Ainf{\mathcal{A}_E^\infty(X)}

\def\Kz{\mathcal{K}_E^0(X)}
\def\Lz{\mathcal{L}_E^0(X)}
\def\Sz{\mathcal{S}_E^0(X)}
\def\Az{\mathcal{A}_E^0(X)}

\def\BD{\mathbb{C}[X;E]}

\bibliographystyle{plain}

\begin{document}

\title{Extreme cases of limit operator theory on metric spaces}

\author{Jiawen Zhang}
\address{School of Mathematics, University of Southampton, Highfield, SO17 1BJ, United Kingdom.}
\email{jiawen.zhang@soton.ac.uk}

\thanks{Supported by the Sino-British Trust Fellowship by Royal Society.}

\date{}

\keywords{Band-dominated operators, limit operators, Property A}

\baselineskip=16pt

\begin{abstract}
The theory of limit operators was developed by Rabinovich, Roch and Silbermann to study the Fredholmness of band-dominated operators on $\ell^p(\mathbb{Z}^N)$ for $p \in \Dom$, and recently generalised to discrete metric spaces with Property A by \v{S}pakula and Willett for $p \in (1,\infty)$. In this paper, we study the remained extreme cases of $p \in\{0,1,\infty\}$ (in the metric setting) to fill the gaps.
\end{abstract}
\date{\today}
\maketitle

\parskip 4pt

\noindent\textit{Mathematics Subject Classification} (2010): 47A53, 30Lxx, 46L85, 47B36.\\

\section{Introduction}

As in linear algebra, a linear operator $A$ on $\ell^p(\mathbb{Z}^N)$ can be regarded as a $\mathbb{Z}^N$-by-$\mathbb{Z}^N$ matrix. We say that $A$ is a \emph{band operator} if all non-zero entries in its matrix sit within a fixed distance from the diagonal, and that $A$ is a \emph{band-dominated operator} if it is a norm-limit of band operators. For each $k \in \mathbb{Z}^N$, the \emph{$k$-shift operator} $V_k: \ell^p(\mathbb{Z}^N) \to \ell^p(\mathbb{Z}^N)$ maps $(x_i)_{i \in \mathbb{Z}^N}$ to $(y_i)_{i \in \mathbb{Z}^N}$ with $y_{i+k}=x_i$. Given a band-dominated operator $A$ and a sequence $(k_m)_{m \in \mathbb N}$ in $\mathbb{Z}^N$ tending to infinity, the sequence $(V_{-k_m}AV_{k_m})_{m \in \mathbb N}$ of shifts of $A$ always contains an entry-wise convergent subsequence by Bolzano-Weierstrass Theorem and a Cantor diagonal argument, and the limit is called a \emph{limit operator} of $A$. The collection of all limit operators of $A$ is called the \emph{operator spectrum} of $A$. See the excellent book \cite{rabinovich2012limit} and the survey paper \cite{chandler2011limit} for relevant references.

Intuitively, the Fredholmness of a band-dominated operator is invariant under compact perturbations such as arbitrary modifications to finitely many entries of the associated matrix, so it should be ``encoded" in the asymptotic behaviours of the matrix. This leads to the central problem in limit operator theory: to study the Fredholmness of a band-dominated operator on the discrete domain $\mathbb{Z}^N$ in terms of its operator spectrum. Major work has been done around it by Lange, Rabinovich, Roch, Roe and Silbermann \cite{lange1985noether, rabinovich2012limit, rabinovich2004fredholm, rabinovich1998fredholm} with several recent contributions by Chandler-Wilde, Lindner, Seidel and others \cite{chandler2011limit, lindner2006infinite, lindner2014affirmative, seidel2014fredholm}.

The following fundamental theorem was proved by Rabinovich, Roch, Silbermann in the case of $p\in (1,\infty)$ and later by Lindner filling the gaps for $p\in \{0,1,\infty\}$.

\begin{thm}[\cite{lindner2006infinite, rabinovich2012limit}]\label{thm1}
Let $A$ be a band-dominated operator on $\ell^p(\mathbb{Z}^N)$, where $p \in \Dom$. Then $A$ is Fredholm if and only if all limit operators of $A$ are invertible and their inverses are uniformly bounded in norm.
\end{thm}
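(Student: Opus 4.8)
The plan is to prove the two implications separately, exploiting the $C^*$-algebraic/Banach-algebraic structure of band-dominated operators together with a localisation principle. Let $\mathcal{A}^p$ denote the closure of the band operators; this is a Banach algebra containing the compacts $\mathcal{K}$ as a closed two-sided ideal, and Fredholmness of $A$ means invertibility of $A + \mathcal{K}$ in the quotient $\mathcal{A}^p/\mathcal{K}$. The easy direction is necessity: if $A$ is Fredholm, pick any sequence $k_m \to \infty$ realising a limit operator $A_h = \lim V_{-k_m} A V_{k_m}$. One checks that $V_{-k_m} A V_{k_m}$ is Fredholm with the same (parametrix and) index as $A$, and that the parametrix relations $B A = I + K_1$, $A B = I + K_2$ with $K_i$ compact pass to the limit after conjugation, because conjugating a fixed compact operator by $V_{k_m}$ with $k_m \to \infty$ sends it to $0$ in the strong (hence, on finite-rank pieces, norm) sense needed. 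This yields an inverse for $A_h$; the uniform bound on $\|A_h^{-1}\|$ comes from a compactness/normal-solvability argument showing the inverses cannot blow up along any sequence of shifts without contradicting the lower norm bound $\|Ax\| \geq c\|x\|$ available up to a compact perturbation.

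The substantive direction is sufficiency: assuming all limit operators are invertible with $\sup_h \|A_h^{-1}\| < \infty$, show $A$ is Fredholm. The standard route is to establish that $A$ is Fredholm if and only if $A$ is \emph{invertible at infinity}, i.e.\ there exist $R > 0$, $c > 0$ and an operator $B$ with $\|BA - I\|$ and $\|AB - I\|$ small when restricted to functions supported outside the ball of radius $R$; equivalently $A$ is invertible modulo operators whose matrix entries are supported near a fixed finite set. For $p \in (1,\infty)$ this equivalence (Fredholm $\Leftrightarrow$ invertible at infinity) is classical, but for $p \in \{0,1,\infty\}$ it is exactly where Lindner's contribution enters, using the predual/preadjoint relationships between $\ell^0,\ell^1,\ell^\infty$ and a careful treatment of the fact that compact operators need no longer be limits of finite-rank operators in $\ell^\infty$. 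Granting that equivalence, one then proves: invertibility of all limit operators with uniformly bounded inverses implies invertibility at infinity. This is the heart of the matter and proceeds by contradiction: if $A$ were not invertible at infinity, one extracts vectors $x_m$ with $\|x_m\| = 1$, $\supp x_m$ escaping to infinity, and $\|A x_m\| \to 0$; translating by suitable $k_m$ and passing to a limit operator $A_h$ produces a nonzero $x$ with $A_h x = 0$ (or, dually, a deficiency in the range), contradicting invertibility of $A_h$ — and the \emph{uniform} bound is what lets one control the whole family simultaneously and run the argument on the left and the right.

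The key technical steps, in order, are: (1) set up the Banach algebra of band-dominated operators and the shift action, and record that $V_{k_m}(\cdot)V_{-k_m}$ fixes band-dominated operators and kills compacts in the appropriate sense; (2) prove the necessity direction by conjugating a parametrix and taking limits, with the uniform-boundedness of inverses coming from a normal-solvability estimate; (3) prove the equivalence ``Fredholm $\Leftrightarrow$ invertible at infinity'' on $\ell^p(\mathbb{Z}^N)$ for all $p \in \Dom$, which for the extreme exponents requires the duality arguments of Lindner; (4) prove ``all limit operators invertible with uniformly bounded inverses $\Rightarrow$ invertible at infinity'' by the contradiction/extraction argument sketched above, carried out on both sides.

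The main obstacle I expect is step (4) in the limiting cases $p \in \{0,1,\infty\}$: the extraction of a convergent subsequence of shifts producing a bona fide limit operator relies, in the $p\in(1,\infty)$ theory, on boundedness-plus-diagonal arguments that behave less well for $\ell^1$ and $\ell^\infty$, and the passage from ``$\|A x_m\|\to 0$ with escaping support'' to ``$A_h$ not bounded below'' must be done with care about which topology the approximate kernel elements converge in; dually, on $\ell^\infty$ one cannot simply appeal to finite-rank approximation of compacts, so the argument that invertibility at infinity yields an actual parametrix (not merely a regulariser) needs Lindner's refined description of the relevant ideal. Controlling these uniformly over the whole operator spectrum — that is why the hypothesis demands a uniform bound on $\|A_h^{-1}\|$ rather than mere invertibility — is the crux.
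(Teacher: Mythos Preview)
The paper does not prove this theorem; it is quoted in the introduction as a known result from the cited references \cite{lindner2006infinite, rabinovich2012limit}. What the paper actually proves is the generalisation Theorem~\ref{main thm} for metric spaces with Property~A, of which Theorem~\ref{thm1} is the special case $X=\mathbb{Z}^N$. So there is no ``paper's own proof'' of Theorem~\ref{thm1} to compare against directly.

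That said, comparing your plan to the paper's proof of Theorem~\ref{main thm}: the necessity direction is similar in spirit (pass a parametrix through the limit map; see Proposition~\ref{bdd below} and Lemma~\ref{norm app}). For sufficiency, however, the approaches diverge. You propose a contradiction argument: extract approximate kernel vectors with escaping support, shift, and produce a non-invertible limit operator. The paper instead constructs a parametrix \emph{directly}: the uniform invertibility hypothesis yields local inverses $B_i,C_i$ over each member of a suitable cover (Lemma~\ref{local parametrices}), and these are glued together via a metric partition of unity with small variation (Lemmas~\ref{cstrct op}--\ref{cstrct op left}, Proposition~\ref{band para}). This constructive route sidesteps the convergence issues you correctly flag for $p\in\{0,1,\infty\}$ and yields an explicit norm bound on the parametrix; your extraction argument is plausible but would need substantial work precisely at the points you identify as obstacles. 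One further point: you equate Fredholmness with invertibility in $\mathcal{A}^p/\mathcal{K}$, but a priori one only gets an inverse in $\mathcal{B}(\ell^p)/\mathcal{K}$; that the inverse can be taken band-dominated is itself part of the content (the equivalence $(1)\Leftrightarrow(2)\Leftrightarrow(3)$ in Theorem~\ref{main thm}, handled via the commutant technique of Section~\ref{commt tech}).
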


It had been a longstanding question whether the requirement of uniform boundedness can be removed in the above theorem, and was recently answered by Lindner and Seidel affirmatively \cite{lindner2014affirmative}. In some literatures (for example \cite{chandler2011limit, rabinovich2012limit}), Theorem \ref{thm1} is stated in a Banach space valued version, which makes it possible to deal with continuous spaces as well.

Roe realised that it is essentially the coarse geometry of the underlying space that plays a key role in the limit operator theory. In \cite{roe2005band}, he generalised limit operators in the case of $p=2$ to all discrete groups and proved Theorem \ref{thm1} for exact discrete groups.

Following Roe's philosophy, \v{S}pakula and Willett \cite{vspakula2017metric} introduced and studied limit operator theory for any discrete metric space $X$ with Property A. The notion of Property A was introduced by Yu \cite{yu2000coarse}, and it is equivalent to exactness in the case of groups. Having noticed that generally there are no natural shift operators on $X$, they considered the Stone-\v{C}ech boundary of $X$ and constructed a canonical \emph{limit space} associated to each boundary point $\omega$, denoted by $X(\omega)$. Intuitively, $X(\omega)$ captures the geometry of $X$ as one `looks towards infinity in the direction of $\omega$'. Furthermore, for a \emph{rich} band-dominated operator $A$ on $\ell^p(X)$ where $p \in (1,\infty)$, they constructed the limit operator $\Phi_\omega(A)$ as a bounded operator on $\ell^p(X(\omega))$.

Having established the above setup, \v{S}pakula and Willett proved the following generalisation of Theorem \ref{thm1}:

\begin{thm}[\cite{vspakula2017metric}]\label{thm2}
Let $X$ be a strongly discrete metric space with bounded geometry, $p \in (1,\infty)$ and $E$ be a Banach space. Assume that $X$ has Property A. Let $A$ be a rich band-dominated operator on $\ell^p_E(X)$. Then the following are equivalent:
\begin{enumerate}
  \item $A$ is $\P$-Fredholm;
  \item all the limit operators are invertible and their inverses have uniformly bounded norms;
  \item all the limit operators are invertible.
\end{enumerate}
\end{thm}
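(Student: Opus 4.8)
The plan is to prove the cycle $(1)\Rightarrow(2)\Rightarrow(3)\Rightarrow(1)$; the implication $(2)\Rightarrow(3)$ is immediate, and Property A will be needed only at the end. For $(1)\Rightarrow(2)$, the key preliminary fact is that a $\P$-Fredholm rich band-dominated operator admits a $\P$-pseudoinverse that is again rich and band-dominated: there is a rich band-dominated $B$ on $\ell^p_E(X)$ with $AB-\Id$ and $BA-\Id$ both $\P$-compact. Since $\Phi_\omega$ is multiplicative on rich band-dominated operators and kills $\P$-compact operators, applying it gives $\Phi_\omega(A)\Phi_\omega(B)=\Id=\Phi_\omega(B)\Phi_\omega(A)$ on $\ell^p_E(X(\omega))$; thus each limit operator is invertible with $\|\Phi_\omega(A)^{-1}\|=\|\Phi_\omega(B)\|\le\|B\|$, a bound independent of $\omega$.

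For $(3)\Rightarrow(2)$ one uses a compactness argument resting on the self-similarity of the operator spectrum: a limit operator of a limit operator of $A$ is again a limit operator of $A$. If $(2)$ failed, there would be boundary points $\omega_m$ with $\|\Phi_{\omega_m}(A)^{-1}\|\to\infty$, equivalently with lower norms $\nu(\Phi_{\omega_m}(A))=\inf\{\|\Phi_{\omega_m}(A)x\|:\|x\|=1\}\to 0$; choosing almost-null unit vectors, re-centering them, and passing to an ultralimit of the pointed limit spaces (here bounded geometry is used) produces a single limit operator $\Phi_{\omega_\infty}(A)$ with $\nu(\Phi_{\omega_\infty}(A))=0$, contradicting $(3)$.

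The implication $(2)\Rightarrow(1)$ is the substantial one. For a band-dominated operator $A$ on $\ell^p_E(X)$ let $\nu_\infty(A)$ denote its \emph{limiting lower norm}, the infimum of $\liminf_m\|Ax_m\|$ over normalised sequences $(x_m)$ whose supports eventually leave every bounded subset of $X$. Two ingredients are needed: \emph{(i)} a lower-norm criterion for Fredholmness, transferring Theorem \ref{thm1} to the metric setting --- a rich band-dominated $A$ is $\P$-Fredholm if and only if $\nu_\infty(A)>0$ and $\nu_\infty(A^*)>0$, where $A^*$ is the adjoint, rich band-dominated on $\ell^q_E(X)$ with $1/p+1/q=1$; and \emph{(ii)} the identity $\nu_\infty(A)=\inf_{\omega}\nu\bigl(\Phi_\omega(A)\bigr)$, together with $\Phi_\omega(A^*)=\Phi_\omega(A)^*$. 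Granting these, $(2)$ gives $c>0$ with $\nu(\Phi_\omega(A))\ge c$ and $\nu(\Phi_\omega(A)^*)\ge c$ for all $\omega$, hence $\nu_\infty(A)\ge c$ and $\nu_\infty(A^*)\ge c$, so $A$ is $\P$-Fredholm.

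The main obstacle lies in ingredient \emph{(ii)}, specifically the inequality $\nu_\infty(A)\ge\inf_\omega\nu(\Phi_\omega(A))$ (the reverse inequality is an easy ultralimit extraction of a near-null unit vector on some $\ell^p_E(X(\omega))$). Given a unit vector $y\in\ell^p_E(X(\omega))$ with $\|\Phi_\omega(A)y\|$ small, one must transplant it back to a genuinely normalised, far-out vector $x\in\ell^p_E(X)$ with $\|Ax\|$ comparably small. This is where Property A is used: for every scale $R$ and every $\varepsilon>0$ it supplies a partition of unity on $X$ subordinate to a uniformly bounded cover and varying by at most $\varepsilon$ in $\ell^p$-norm over $R$-balls, and gluing local copies of $y$ along such a partition of unity produces the required $x$, the Property A estimate being exactly what keeps the telescoping and commutator errors negligible and $\|x\|$ close to $1$. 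A secondary technical point, which is in fact the focus of the present paper, is that in the extreme ranges $p\in\{0,1,\infty\}$ the literal Banach-space adjoint $A^*$ is not the right object in \emph{(i)}, so both the adjoint and the dual limit space must be replaced by appropriate substitutes.
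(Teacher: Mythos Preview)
This theorem is quoted from \cite{vspakula2017metric}; the present paper does not reprove it but follows the same scheme in its own Theorem~\ref{main thm}. Compared with that scheme your proposal has several genuine gaps.

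\medskip
\textbf{$(1)\Rightarrow(2)$.} You assume a $\P$-pseudoinverse $B$ that is itself \emph{rich band-dominated}, so that $\Phi_\omega(B)$ makes sense. This is exactly what is not available a priori; producing such a $B$ without circularity essentially requires the density theorem (which for $p\in(1,\infty)$ already uses Property~A) together with the very implication you are trying to prove. The route in \cite{vspakula2017metric}, reproduced here as Proposition~\ref{bdd below} via Lemma~\ref{norm app}, avoids this: from $BA-\Id\in\Kp$ one gets $\|\Phi_\omega(A)v\|\ge\|v\|/\|B\|$ directly for finitely supported $v$, never invoking $\Phi_\omega(B)$, and then handles the adjoint by Lemma~\ref{adj limit op}.

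\medskip
\textbf{$(3)\Rightarrow(2)$.} Your ultralimit sketch is missing the crucial step: to push a far-out near-null sequence to a single limit space one must first know the near-null vectors can be chosen with \emph{uniformly bounded support diameter}. This is the lower-norm localisation of \cite[Proposition~7.6]{vspakula2017metric} (cf.\ Proposition~\ref{local norm} here), and for $p\in(1,\infty)$ it is precisely where Property~A is used. So your assertion that Property~A is ``needed only at the end'' is wrong for this implication.

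\medskip
\textbf{$(2)\Rightarrow(1)$.} Two problems. First, ingredient~(i) --- that $\nu_\infty(A)>0$ and $\nu_\infty(A^*)>0$ force $\P$-Fredholmness --- is asserted without argument; it is not immediate and is not how \cite{vspakula2017metric} or this paper proceed. Second, your description of ingredient~(ii) has the directions reversed: you correctly name $\nu_\infty(A)\ge\inf_\omega\nu(\Phi_\omega(A))$ as the hard inequality, but then propose to prove it by transplanting a near-null $y\in\ell^p_E(X(\omega))$ back to $X$. That transplant gives $\nu_\infty(A)\le\nu(\Phi_\omega(A))$, the easy direction (this is just Lemma~\ref{norm app}). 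The hard direction goes from $X$ to some $X(\omega)$ and again needs lower-norm localisation, hence Property~A.

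The actual argument in \cite{vspakula2017metric} for $(2)\Rightarrow(1)$, mirrored here in Proposition~\ref{band para} and Lemma~\ref{local parametrices}, is structurally different from yours: one builds a parametrix explicitly. Property~A supplies a metric $p$-partition of unity $\{\phi_i\}$ of small variation; uniform invertibility of the limit operators yields, via a compactness argument on $\partial X$, local inverses $B_i$ on the pieces; and the glued operator $\sum_i\phi_i^{p/q}B_i\phi_i$ is shown (using Lemmas~\ref{cstrct op}--\ref{cstrct op left}) to invert $A$ modulo $\Kp$. No lower-norm criterion enters.
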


In this paper, we study the remained cases of $p\in \{0,1,\infty\}$ in the above metric setup, and prove that the above theorem still holds in these cases (Theorem \ref{main thm}). As we will see in the paper, surprisingly some parts of the proof are quite different from the case of $p \in (1,\infty)$, hence new tools and techniques are required. For convenience of the readers, we list the main points as follows.

The first obstacle is to construct limit operators for $p=\infty$. Analogous construction by \v{S}pakula and Willett in the case of $p \in (1,\infty)$ only produces a formal matrix, while we have to verify that it can be realised as the matrix coefficients of a unique bounded operator. This holds trivial when $p<\infty$, while unfortunately, it is not always the case when $p=\infty$ due to the fact that the set of finitely supported vectors are no longer dense in $\ell^\infty(X)$ for infinite $X$.

We work out by proving a density result for $p\in \{0,1,\infty\}$, providing an approach to approximate \emph{rich} band-dominated operators via \emph{rich} band operators. Notice that when $p \in (1,\infty)$, this has already been shown under the additional assumption of Property A on the underlying space \cite[Theorem 6.6]{vspakula2017metric}, which is not necessary for $p \in \{0,1,\infty\}$ from our arguments. We would also like to mention that the approach we develop here also plays a role in a very recent work by \v{S}pakula and the author to study quasi-locality and property A \cite{SZ2018}.

On the other hand, non-density of finitely supported vectors in $\ell^\infty(X)$ also prevents us from using the same tools in \cite{vspakula2017metric} directly to prove some parts of the result. While alternatively, thanks to the imposition of band-domination on operators, we may still consider only finitely supported vectors via a commutant technique.

Another obstacle here is the lack of duality for $p=\infty$. In the case of $p\in (1,\infty)$, as shown in \cite{vspakula2017metric}, properties of operators on $\ell^p(X)$ can be easily transferred to their adjoints since they still act on \emph{$\ell^p$-type} spaces. Unfortunately, this does not apply to the  space $\ell^\infty(X)$. Instead, we borrow dual-space arguments from \cite{chandler2011limit, lindner2003limit}. Roughly speaking, the idea is to consider the double-predual of $\ell^\infty(X)$ (i.e., $c_0(X)$) and properties of operators on $\ell^\infty(X)$ can be characterised nicely via their restrictions on $c_0(X)$, which are much easier to handle.

\textbf{Outline.} For completeness, most of the materials here are written for general $p$ rather than just $p \in \{0,1,\infty\}$. In Section \ref{preliminary}, we recall several classes of operators on $\ell^p$-spaces including band(-dominated) operators; and study when operators can be uniquely determined by their matrix coefficients. In Sections \ref{limit spaces} and \ref{limit operators}, we recall the notion of limit spaces and limit operators in the case of $p\in \dom$. After stating the density result (Theorem \ref{density thm}) whose proof is postponed to Section \ref{density section}, we show in Section \ref{imp for infty} how to implement limit operators when $p=\infty$. Section \ref{construct para section} contains several technical lemmas providing approximations of an operator via its block cutdowns, which is mainly devoted to proving Theorem \ref{density thm} and constructing parametrices later. Finally in Section \ref{main thm section}, we state our main theorem and provide a detailed proof which is divided into several parts.

\emph{Acknowledgments.} First, I would like to thank Kang Li for suggesting this topic and some early discussions. I am also grateful to J\'{a}n \v{S}pakula, Baojie Jiang and Benyin Fu for several illuminating discussions and comments after reading some early drafts of this paper. Finally, I would like to express my sincere gratitude to Graham Niblo and Nick Wright for continuous supports.

\section{Preliminaries}\label{preliminary}
We collect several background notions in operator algebra theory and establish our settings in this section. See \cite{chandler2011limit,rabinovich2012limit} for more information.

For a metric space $(X,d)$, denote by $B(x,R)$ the closed ball in $X$ with radius $R$ and centre $x$. We say that $X$ has \emph{bounded geometry} if for any $R$, the number $\sup_{x\in X} \sharp B(x,R)$ is finite; that $X$ is \emph{strongly discrete} if the set $\{d(x,y):x,y\in X\}$ is a discrete subset of $\R$. We say that `\emph{$X$ is a space}' as shortened for `$X$ is a strongly discrete metric space with bounded geometry' from now on.

\emph{Throughout the paper, always denote $X$ to be a space and $E$ to be a Banach space.}

\subsection{Banach space valued $\ell^p$-space}\label{lp space}
We start with the following notions of Banach space valued $\ell^p$-spaces:
\begin{itemize}
  \item $\lp:=\ell^p(X;E)$ for $p\in [1,\infty)$, which denotes the Banach space of $p$-summable functions from $X$ to $E$ with respect to the counting measure;
  \item $\linf:=\ell^\infty(X;E)$, which denotes the Banach space of bounded functions from $X$ to $E$;
  \item $\lz:=c_0(X;E)$, which denotes the Banach space of continuous functions from $X$ to $E$ vanishing at infinity.
\end{itemize}

For a Banach space $Y$, denote by $\B(Y)$ the Banach algebra of all bounded linear operators on $Y$. For $p \in \Dom$, $\rho: C_b(X) \to \B(\lp)$ is a representation defined by point-wise multiplication. To simplify notations, we write $f \xi$ instead of $\rho(f)(\xi)$ for $f \in C_b(X)$ and $\xi \in \lp$.

Denote by $\F$ the set of all finite subsets in $X$, equipped with the order by inclusion. For any $F \in \F$, define an operator $P_F:=\rho(\chi_F)$ on $\lp$ where $\chi_F$ denotes the characteristic function of $F$, and set $Q_F:=\Id-P_F$. Clearly, the net $\P:=\{P_F\}_{F \in \F}$ satisfies the following conditions for $p \in \Dom$:
\begin{enumerate}
  \item[i)] $\sup_F \|P_F\xi\| = \|\xi\|$ for any $\xi \in \lp$;
  \item[ii)] $P_FP_{F'}=P_F=P_{F'}P_F$ for any $F \subseteq F'$.
\end{enumerate}
Furthermore, denote by $\lpz$ the closure of $\bigcup_{F \in \F}P_F(\lp)$ in $\lp$. We point out the following elementary observation:

\begin{lem}\label{Y0}
$\lpz=\lp$ for $p\in \dom$, while $\linfz=\lz$.
\end{lem}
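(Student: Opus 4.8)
The plan is to unwind the definitions on both sides and check the two set inclusions in each case.

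First I would treat the case $p \in \dom = \{0\} \cup [1,\infty)$. The inclusion $\lpz \subseteq \lp$ is immediate since $\lpz$ is by definition a closed subspace of $\lp$, so the content is the reverse inclusion. For $p \in [1,\infty)$, given $\xi \in \lp$, I would enumerate a support-exhausting sequence of finite sets $F_n \in \F$ with $\bigcup_n F_n \supseteq \supp(\xi)$ (possible since $X$ is countable, being a strongly discrete space of bounded geometry) and observe that $\|\xi - P_{F_n}\xi\|_p^p = \sum_{x \notin F_n} \|\xi(x)\|_E^p \to 0$ by the tail-summability of a convergent series; hence $\xi$ lies in the closure of $\bigcup_F P_F(\lp)$. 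For $p=0$, i.e.\ $\xi \in c_0(X;E)$, the same $F_n$ work: given $\varepsilon>0$, only finitely many $x$ satisfy $\|\xi(x)\|_E \geq \varepsilon$, so once $F_n$ contains all of them $\|\xi - P_{F_n}\xi\|_\infty = \sup_{x\notin F_n}\|\xi(x)\|_E < \varepsilon$. Thus $\lpz = \lp$ for all $p \in \dom$.

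For the case $p=\infty$ I must show $\linfz = \lz$, i.e.\ the closure of $\bigcup_F P_F(\linf)$ in $\linf$ equals $c_0(X;E)$. Each $P_F\eta$ is finitely supported, hence certainly in $c_0(X;E)$, and $c_0(X;E)$ is closed in $\linf$, giving $\linfz \subseteq \lz$. Conversely, given $\xi \in c_0(X;E)$ and $\varepsilon>0$, the set $F := \{x : \|\xi(x)\|_E \geq \varepsilon\}$ is finite, and $\|\xi - P_F\xi\|_\infty = \sup_{x \notin F}\|\xi(x)\|_E \leq \varepsilon$, so $\xi$ is approximated by elements of $\bigcup_F P_F(\linf)$; hence $\lz \subseteq \linfz$. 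Combining the two inclusions finishes this case.

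There is no real obstacle here; the only point requiring a moment's care is the contrast between $p<\infty$ and $p=\infty$: for finite $p$ the space $\lpz$ recaptures all of $\lp$ because finitely supported vectors are dense, whereas for $p=\infty$ they are dense only in $c_0(X;E)$ — exactly the phenomenon the introduction flags as the source of later difficulties. I would state the proof concisely, perhaps handling $p \in [1,\infty)$, $p=0$, and $p=\infty$ in three short displayed estimates of the form $\|\xi - P_F\xi\|$, and leave it at that.
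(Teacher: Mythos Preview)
The paper does not give a proof of this lemma; it is presented as an ``elementary observation'' and left to the reader. Your argument is correct and is precisely the routine verification one would supply: finitely supported vectors are norm-dense in $\lp$ for $p\in[1,\infty)$ by tail-summability, dense in $c_0(X;E)$ by the vanishing-at-infinity condition, and their $\ell^\infty$-closure is exactly $c_0(X;E)$.

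One cosmetic remark: the detour through countability of $X$ is unnecessary. For $p\in[1,\infty)$ the condition $\sum_{x\in X}\|\xi(x)\|_E^p<\infty$ already forces $\supp(\xi)$ to be countable and guarantees that for every $\epsilon>0$ there is a finite $F$ with $\sum_{x\notin F}\|\xi(x)\|_E^p<\epsilon$; one can argue directly with the net $\{P_F\}_{F\in\F}$ rather than extracting a sequence. This avoids the (correct but tangential) appeal to bounded geometry, and matches the net-based formulation used throughout the paper.
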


\subsection{Classes of Operators}\label{class of op}
Here we recall several classes of operators on $\lp$. Let $p \in \{0\} \cup [1,\infty]$ be fixed.

\begin{defn}
We define subsets $\Kp$, $\Lp$ and $\Sp$ of $\B(\lp)$ as follows:
\begin{enumerate}
  \item $K \in \Kp$ \emph{if and only if}
        $$\lim_{F\in \F}\|KQ_F\|=0 \andx \lim_{F\in \F}\|Q_FK\|=0.$$
        Elements in $\Kp$ are called \emph{$\P$-compact}.
  \item $T \in \Lp$ \emph{if and only if} for any $F'\in \F$, we have
        $$\lim_{F\in \F}\|P_{F'}TQ_F\|=0 \andx \lim_{F\in \F}\|Q_FTP_{F'}\|=0.$$
  \item $T \in \Sp$ \emph{if and only if} for any $F'\in \F$, we have
        $$\lim_{F\in \F}\|P_{F'}TQ_F\|=0.$$
\end{enumerate}
\end{defn}

Notice that $\Kp, \Lp$ and $\Sp$ are also denoted by $\mathcal{K}(Y,\P)$, $\mathcal{L}(Y,\P)$ and $S(Y)$ for $Y=\lp$ respectively in some literatures, for example \cite{chandler2011limit, lindner2006infinite, rabinovich2012limit}. Clearly $\P \subseteq \Kp \subseteq \Lp \subseteq \Sp$, and $\Kp, \Lp, \Sp$ are linear subspaces of $\B(\lp)$. Furthermore, we have:

\begin{lem}[\cite{chandler2011limit}]
$\Kp, \Lp$ and $\Sp$ are Banach subalgebras of $\B(\lp)$.
\end{lem}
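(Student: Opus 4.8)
The plan is to leverage what is already on the table: each of $\Kp$, $\Lp$, $\Sp$ has been observed to be a linear subspace of $\B(\lp)$, and we have the inclusions $\P\subseteq\Kp\subseteq\Lp\subseteq\Sp$. So the only things left to verify are (a) closure under multiplication, making each a subalgebra, and (b) norm-closedness, which upgrades each to a \emph{Banach} subalgebra, since a norm-closed subalgebra of the Banach algebra $\B(\lp)$ is automatically complete. Throughout I would use the elementary facts that $\|P_F\|\le 1$ and $\|Q_F\|\le 1$ for every $F\in\F$ (both are given by multiplication by a characteristic function).

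For closure under products, the case of $\Kp$ is immediate: if $K,K'\in\Kp$ then $\|KK'Q_F\|\le\|K\|\,\|K'Q_F\|\to 0$ and $\|Q_FKK'\|\le\|Q_FK\|\,\|K'\|\to 0$. For $\Sp$ the key device is to insert a resolution of the identity $\Id=P_G+Q_G$ in the middle: given $A,B\in\Sp$, $F'\in\F$ and $\varepsilon>0$, write $P_{F'}ABQ_F=P_{F'}AP_GBQ_F+P_{F'}AQ_GBQ_F$; using $A\in\Sp$ choose $G\in\F$ with $\|P_{F'}AQ_G\|<\varepsilon/(1+\|B\|)$ so that the second term is $\le\varepsilon$ uniformly in $F$, and then with this $G$ fixed use $B\in\Sp$ (applied to the finite set $G$) to make $\|P_GBQ_F\|$, hence $\|P_{F'}AP_GBQ_F\|\le\|A\|\,\|P_GBQ_F\|$, small for $F$ large. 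For $\Lp$ I would simply observe that $\Lp=\Sp\cap\Sp'$, where $\Sp'$ is the set of $T$ with $\lim_F\|Q_FTP_{F'}\|=0$ for all $F'$, and that the symmetric version of the above computation (now inserting $\Id=P_G+Q_G$ on the other side) shows $\Sp'$ is closed under products; an intersection of two subalgebras is a subalgebra.

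For norm-closedness, each defining condition is of the form $\lim_{F\in\F}\|R_F(T)\|=0$ where $R_F(T)$ is one of the ``sandwiches'' $P_{F'}TQ_F$, $Q_FTP_{F'}$, $Q_FT$, $TQ_F$, each depending linearly on $T$ with $\|R_F(T)\|\le\|T\|$. Hence if $T_n\to T$ in norm with all $T_n$ in one of the classes, a routine $\varepsilon/2$ argument — pick $n$ with $\|T-T_n\|$ small, then $F_0$ large for that fixed $T_n$ — shows $T$ lies in the same class. I expect the only genuinely non-routine point to be the product-closure argument for $\Sp$ (and the matching half of $\Lp$): one must be careful with the order of quantifiers, choosing the auxiliary finite set $G$ after $F'$ and the first factor but before the tail index $F$, so that the estimate closes up.
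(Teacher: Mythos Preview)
Your proposal is correct and follows essentially the same approach as the paper: the core step is the $P_G+Q_G$ splitting for products in $\Sp$, choosing $G$ from the first factor and then $F$ from the second, exactly as the paper does. You are more thorough in spelling out norm-closedness and in handling $\Lp$ via the intersection $\Sp\cap\Sp'$, whereas the paper simply declares these points routine, but the substance is the same.
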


\begin{proof}
It suffices to show the closeness under multiplication, which is obvious for $\Kp$. We only prove for $\Sp$, and the argument is similar for $\Lp$. Given $S,T \in \Sp$, for any $F' \in \F$ and $\epsilon>0$, there exists $G\in \F$ such that $\|P_{F'}SQ_G\|<\frac{\epsilon}{2\|T\|}$. For such $G$, there exists some $F \in \F$ such that $\|P_GTQ_F\|<\frac{\epsilon}{2\|S\|}$. Hence we have:
$$\|P_{F'}STQ_F\| \leq \|P_{F'}SQ_GTQ_F\| + \|P_{F'}SP_GTQ_F\| < \frac{\epsilon}{2\|T\|}\cdot \|T\| + \|S\| \cdot \frac{\epsilon}{2\|S\|}=\epsilon,$$
and we finish the proof.
\end{proof}

\begin{lem}[\cite{chandler2011limit}]\label{ideal}
$\Kp$ is a two-sided ideal in $\Lp$, and $\Lp$ is the largest subalgebra of $\B(\lp)$ with this property.
\end{lem}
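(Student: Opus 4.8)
The plan is to prove the two assertions separately: first that $\Kp$ is a two-sided ideal in $\Lp$, and then that $\Lp$ is maximal with this property, i.e.\ any subalgebra $\mathcal{B}$ of $\B(\lp)$ containing $\Kp$ as a two-sided ideal must be contained in $\Lp$.

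For the ideal statement, I would take $K \in \Kp$ and $T \in \Lp$ and verify the four limit conditions defining $KT, TK \in \Kp$. The two conditions with $Q_F$ on the outside of $K$, namely $\lim_F \|Q_F(TK)\| = 0$ and $\lim_F \|(KT)Q_F\| = 0$, are immediate from $\lim_F\|Q_FK\|=0$, $\lim_F\|KQ_F\|=0$ and boundedness of $T$, since $\|Q_F T K\| \le \|Q_F K\|\cdot\ldots$ — wait, that does not factor that way; more carefully, $\|Q_F(TK)\| \le \|T\| \cdot$ does not help either. The correct pairing is: $KT$ has $K$ on the left, so $\|Q_F(KT)\|$ needs $\|Q_F K\| \to 0$ — no. Let me restate: for $KT \in \Kp$ we need $\|(KT)Q_F\|\to 0$ and $\|Q_F(KT)\|\to 0$. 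The first follows since $\|KTQ_F\| \le \|K\|\|TQ_F\|$ and $TQ_F$ does not go to zero; instead use $\|(KT)Q_F\|$ — here we must insert a block: pick $G$ with $\|Q_GK\|$ small, write $KTQ_F = KP_GTQ_F + KQ_GTQ_F$, and use $T \in \Lp \subseteq \Sp$ applied with fixed $G$ (so $\|P_GTQ_F\|\to 0$), plus $\|KQ_G\|$ — no, $\|Q_GK\|$ — small for the second term after estimating $\|KQ_G T Q_F\|\le \|KQ_G\|\|T\|$. So the key point is that the definition of $\Lp$ supplies exactly the "both-sided" decay needed to absorb a $\P$-compact factor. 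The remaining two conditions ($\|Q_F(KT)\|\to 0$ from $\|Q_FK\|\to 0$, and $\|(TK)Q_F\|\to 0$ from $\|KQ_F\|\to 0$) are direct. Linearity being already known, this gives the ideal property. I expect a parallel but slightly more delicate bookkeeping here than in the $\Sp$ multiplicativity lemma above, and this is a routine verification.

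For the maximality statement, the main idea is that $\P \subseteq \Kp$, so if $\mathcal{B} \supseteq \Kp$ has $\Kp$ as an ideal, then for every $T \in \mathcal{B}$ and every $F \in \F$ we have $P_F T \in \Kp$ and $T P_F \in \Kp$. Now I would use that for $T \in \B(\lp)$, membership $T \in \Lp$ is equivalent to: $P_{F'}T \in \Kp$ and $TP_{F'} \in \Kp$ for every $F' \in \F$. Indeed $P_{F'}T \in \Kp$ unpacks (using $\lim_F\|P_{F'}TQ_F\|=0$ and $\lim_F\|Q_FP_{F'}T\|=0$, the latter automatic since $Q_FP_{F'}=0$ eventually) to precisely one of the two defining conditions of $\Lp$, and symmetrically $TP_{F'} \in \Kp$ gives the other. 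Hence $\mathcal{B} \subseteq \Lp$. One should also check $\Lp$ itself is a subalgebra in which $\Kp$ is an ideal — but that is the first part together with the earlier lemma that $\Lp$ is a Banach algebra, so nothing new is needed.

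The only genuinely substantive point, and the one I would flag as the crux, is the reformulation "$T \in \Lp \iff P_{F'}T, TP_{F'} \in \Kp$ for all $F'$"; everything else is chasing the definitions. I would isolate this equivalence as the organizing observation, deduce maximality in one line, and keep the ideal verification compact by only writing out the one nontrivial block-decomposition estimate and noting the other three limits are immediate.
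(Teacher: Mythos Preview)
Your approach is correct and essentially the same as the paper's: split $KTQ_F = KP_GTQ_F + KQ_GTQ_F$ with $\|KQ_G\|$ small, then use $T\in\Lp$ to make $\|P_GTQ_F\|$ small; and for maximality, use $\P\subseteq\Kp$ so that $P_{F'}T,\,TP_{F'}\in\Kp$ forces $T\in\Lp$. One small correction: of the four limits needed for $KT,TK\in\Kp$, only two are immediate (those with $Q_F$ adjacent to $K$); the other two, $\|KTQ_F\|\to 0$ and $\|Q_FTK\|\to 0$, both require the block-insertion argument, so ``the other three are immediate'' overcounts by one.
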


\begin{proof}
For any $S\in \Kp$ and $T \in \Lp$, we know $\|Q_FST\| \to 0$ since $\|Q_FS\| \to 0$. On the other hand, for any $\epsilon>0$, there exists $G \in \F$ such that $\|SQ_G\|<\frac{\epsilon}{2\|T\|}$. For such $G$, there exists some $F \in \F$ such that $\|P_GTQ_F\|<\frac{\epsilon}{2\|S\|}$. Hence,
$$\|STQ_F\|\leq \|SQ_GTQ_F\|+\|SP_GTQ_F\| < \frac{\epsilon}{2\|T\|}\cdot \|T\|+\|S\|\cdot \frac{\epsilon}{2\|S\|}=\epsilon.$$
So $ST\in \Kp$, and similarly $TS \in \Kp$. For the second statement, given $T\in \B(\lp)$ satisfying that $ST$ and $TS$ sit in $\Kp$ for any $S \in \Kp$, then $T \in \Lp$ since $\P \subseteq \Kp$.
\end{proof}

\begin{rem}
Recall that by \cite[Proposition 1.1.9]{rabinovich2012limit}, $\Lp$ is closed under taking inverses. Also notice that by \cite[Chapter 3]{chandler2011limit}, $\Sp$ is exactly the set of all sequentially continuous operators on $\lp$, which explains the terminology.
\end{rem}

\begin{lem}\label{inv}
For any $T \in \Lp$, the subspace $\lpz$ is invariant under $T$.
\end{lem}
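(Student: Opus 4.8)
The plan is to show that if $T \in \Lp$ and $\xi \in \lpz$, then $T\xi \in \lpz$; since $\lpz$ is by definition closed, it suffices to check this on a dense subset, namely on vectors of the form $\xi = P_{F'}\eta$ with $F' \in \F$ and $\eta \in \lp$. So fix such an $F'$ and $\eta$. The goal becomes: $T P_{F'}\eta$ lies in the closure of $\bigcup_{F \in \F} P_F(\lp)$.

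The key observation is that $P_F(TP_{F'}\eta)$ converges to $TP_{F'}\eta$ as $F \in \F$, which is exactly the assertion that $\|Q_F T P_{F'}\eta\| \to 0$. Here is where the defining property of $\Lp$ enters: by definition of $\Lp$ (part (2) of the definition), for the fixed $F' \in \F$ we have $\lim_{F \in \F}\|Q_F T P_{F'}\| = 0$, hence a fortiori $\|Q_F T P_{F'}\eta\| \to 0$ for our fixed $\eta$. Since $Q_F = \Id - P_F$, this says precisely that $P_F(TP_{F'}\eta) \to TP_{F'}\eta$ in norm, and each $P_F(TP_{F'}\eta)$ lies in $P_F(\lp) \subseteq \bigcup_{F\in\F} P_F(\lp)$. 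Therefore $TP_{F'}\eta \in \lpz$.

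To finish, I would note that $\bigcup_{F' \in \F} P_{F'}(\lp)$ is dense in $\lpz$ by definition, $T$ is bounded, and $\lpz$ is a closed subspace of $\lp$; hence $T(\lpz) \subseteq \overline{T\big(\bigcup_{F'} P_{F'}(\lp)\big)} \subseteq \overline{\lpz} = \lpz$, which is the claim. I do not anticipate a genuine obstacle here: the statement is essentially a direct unwinding of the definition of $\Lp$ together with the density of the $P_{F'}(\lp)$ in $\lpz$ and the continuity of $T$. The only mild point to be careful about is that $\F$ is a directed set (ordered by inclusion), so the limits above are limits of nets rather than sequences; but property ii) of $\P$ guarantees the relevant quantities are monotone along the net, so convergence of the net is unproblematic and the argument goes through verbatim for every $p \in \{0\}\cup[1,\infty]$.
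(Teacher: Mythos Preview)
Your proof is correct and rests on exactly the same ingredient as the paper's, namely the defining condition $\lim_{F\in\F}\|Q_F T P_{F'}\|=0$ for $T\in\Lp$. The only cosmetic difference is organizational: the paper handles a general $v\in\lpz$ directly via the characterization $\lpz=\{v:\lim_F\|Q_Fv\|=0\}$ and the decomposition $v=P_Fv+Q_Fv$, whereas you first treat finitely supported vectors and then pass to the closure by continuity of $T$.
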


\begin{proof}
Clearly, $\lpz=\{v\in \lp: \lim_{F \in \F}\|Q_Fv\| = 0\}$. Hence for any $v \in \lpz$ and $\epsilon>0$, there exists $F \in \F$ such that $\|Q_Fv\|<\epsilon$. Since $T\in \Lp$, there exists $G \in \F$ such that $\|Q_GTP_F\| < \epsilon$. Combining them together, we have
$$\|Q_GTv\| \leq \|Q_GTP_Fv\| + \|Q_GTQ_Fv\| < \epsilon \|v\| + \epsilon\|T\|.$$
Taking $\epsilon \to 0$, we have $\|Q_GTv\| \to 0$, which implies that $Tv \in \lpz$.
\end{proof}

The notions of $\P$-Fredholmness and invertibility at infinity are fundamental in limit operator theory, whose definitions are recalled as follows:
\begin{defn}[\cite{rabinovich2012limit, vspakula2017metric}]
An operator $A\in \Lp$ is \emph{invertible at infinity} if there exists $B \in \B(\lp)$ such that $AB-\Id$ and $BA-\Id$ belong to $\Kp$. $A \in \Lp$ is \emph{$\P$-Fredholm} if the above $B$ can be chosen in $\Lp$.
\end{defn}

\subsection{Matrix Coefficients}\label{matrix coeff}
For $p \in \Dom$ and $x\in X$, let $E_x:E \to \lp$ and $R_x: \lp \to E$ be the extension and restriction operators, defined by:
\begin{equation*}
E_x(e)(y)=
\begin{cases}
  ~e, & y=x, \\
  ~0, & \mbox{otherwise};
\end{cases}
\end{equation*}
and $R_x(\xi)=\xi(x)$ where $\xi \in \lp$. For $T \in \B(\lp)$ and $x,y\in X$, define its \emph{$xy$-matrix coefficient} to be $T_{xy}=R_xTE_y \in \B(E)$.

A natural question is that to what extent do matrix coefficients determine the operator itself? It is obvious that for $\xi \in \bigcup_{F \in \F}P_F(\lp)$, $T\xi$ can be reformulated as
$$(T\xi)(x)=R_xT\big(\sum_{y \in X}E_y\xi(y)\big)=\sum_{y \in X}T_{xy}\xi(y)$$
for each $x\in X$. Passing to the closure and by Lemma \ref{Y0}, we obtain that for $p\in \dom$, an operator on $\lp$ can be determined by its matrix coefficients. Furthermore, in this case we have that for any $T\in \B(\lp)$,
\begin{equation}\label{norm p fin}
  \|T\|=\sup_{F\in \F}\|P_FTP_F\|.
\end{equation}
Conversely, we have the following result, whose proof is straightforward.

\begin{lem}\label{matrix to op for finite p}
Given $p \in \dom$ and an $X$-by-$X$ matrix $A=[a_{xy}]_{x,y \in X}$ with entries in $\B(E)$. Suppose there exists $M>0$ such that: For any finite $F \subseteq X$, the operator $A_F$ on $\ell^p(F)$ defined by multiplication by the finite submatrix $[a_{xy}]_{x,y \in F}$ has norm at most $M$. Then the multiplication by $A$ provides a well-defined linear operator on $\lp$ with norm $\sup_{F \in \F} \|A_F\|$.
\end{lem}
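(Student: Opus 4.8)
The plan is to show that the matrix $A$ acts on each $\ell^p(X;E)$ by the expected formula, with the norm bound inherited from the finite truncations. First I would fix $p \in \dom$ and, for $F \in \F$, let $A_F$ denote the operator on $\ell^p(F;E) \cong P_F(\lp)$ given by the finite submatrix $[a_{xy}]_{x,y \in F}$, so that by hypothesis $\|A_F\| \le M$ for all $F$. Observe that for $F \subseteq F'$ in $\F$ one has $P_F A_{F'} P_F = A_F$, so the net $(P_F A_F P_F)_{F \in \F}$ — reinterpreted as operators on $\lp$ via $P_F A_F P_F := P_F A_{F'} P_F$ for any $F' \supseteq F$ — is ``coherent'' on the dense subspace $\bigcup_{F} P_F(\lp)$: for $\xi \in P_{F_0}(\lp)$ and any $F \supseteq F_0$, $(P_F A_F P_F)\xi = \sum_{y \in F_0} a_{xy}\xi(y)$ evaluated at $x \in F$, which stabilizes as $F$ grows in the sense that the values at each fixed $x$ are eventually constant.

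Next I would define, for $\xi \in \bigcup_{F}P_F(\lp)$, the vector $A\xi$ by $(A\xi)(x) := \sum_{y \in X} a_{xy}\xi(y)$ (a finite sum since $\xi$ has finite support), and check that $A\xi \in \lp$ with $\|A\xi\| \le M\|\xi\|$: indeed if $\supp \xi \subseteq F_0$ then for any finite $G \supseteq F_0$ one has $\|P_G A \xi\| = \|A_G \xi\| \le M\|\xi\|$, and taking the supremum over $G$ using property i) of the net $\P$ (i.e. $\sup_G \|P_G \eta\| = \|\eta\|$ for $\eta \in \lp$, which in particular shows $A\xi$ does lie in $\lp$ once we know its truncations are uniformly bounded — here one should note $A\xi$ is a priori just a function $X \to E$, and the bound $\sup_G \|P_G A\xi\|_p \le M\|\xi\| < \infty$ forces $A\xi \in \ell^p(X;E)$). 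Since $\bigcup_F P_F(\lp)$ is dense in $\lp = \lpz$ for $p \in \dom$ by Lemma \ref{Y0}, the bounded operator $\xi \mapsto A\xi$ extends uniquely to a bounded linear operator on $\lp$, still denoted by multiplication by $A$, with $\|A\| \le M$; and the reverse inequality $\|A\| \ge \sup_F \|A_F\|$ is immediate since $A_F = P_F A P_F|_{P_F(\lp)}$ for the extended operator, giving $\|A\| = \sup_{F \in \F}\|A_F\|$ by \eqref{norm p fin}. Finally I would verify that the matrix coefficients of the resulting operator are exactly the $a_{xy}$, so that ``multiplication by $A$'' is unambiguous: $R_x A E_y(e) = a_{xy}(e)$ directly from the defining formula on the finitely supported vector $E_y(e)$.

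The only genuinely delicate point is the step that upgrades ``all finite truncations are uniformly bounded'' to ``$A\xi$ is itself an element of $\lp$''; for $p < \infty$ this is just the monotone convergence of $\sum_{x \in G}\|(A\xi)(x)\|_E^p$ as $G \uparrow X$, bounded by $M^p\|\xi\|^p$, while for $p = 1$ it is the analogous statement with $p$-sums replaced by sums of norms — both routine. Everything else is formal: coherence of the truncations, density via Lemma \ref{Y0}, and the norm identity \eqref{norm p fin}. I therefore expect no real obstacle, which is consistent with the paper's remark that the proof is straightforward; the content of the lemma is really just packaging the observation that on $\lpz$ an operator is determined by, and controlled by, its finite compressions.
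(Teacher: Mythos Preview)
The paper omits the proof as straightforward, and for $p \in [1,\infty)$ your argument is precisely the expected one: define $A\xi$ on finitely supported $\xi$ by the matrix formula, observe $\|P_G A\xi\|_p = \|A_G\xi\|_p \le M\|\xi\|_p$ for all finite $G \supseteq \supp\xi$, upgrade to $A\xi \in \lp$ by monotone convergence of the $p$-sums, and extend by density via Lemma~\ref{Y0}. The norm identity then follows from \eqref{norm p fin}. Nothing to add there.

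There is, however, a genuine gap at $p = 0$. Your closing paragraph treats only $p \in [1,\infty)$; the monotone-convergence step does not apply to $\lz = c_0(X;E)$ since the norm there is the sup norm, and the inference ``$\sup_G \|P_G(A\xi)\|_\infty \le M\|\xi\|_\infty$ forces $A\xi \in \lz$'' is simply false --- it only places $A\xi$ in $\linf$. In fact the lemma as stated appears to fail for $p=0$: with $X=\mathbb{N}$ (usual metric), $E=\mathbb{C}$, and the matrix whose $0$-th column is identically $1$ with all other entries $0$, every finite truncation $A_F$ has operator norm at most $1$ on $\ell^\infty(F)$, yet $A\delta_0 = \mathbf{1}_X \notin c_0(X)$. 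So for $p=0$ some extra hypothesis (e.g.\ that each column of $A$ lies in $c_0$, or that $A$ has finite propagation) is needed to even get well-definedness on $\lz$; in the paper's actual application (Corollary~\ref{p finite}) this is supplied by the band-dominated structure, but it is not part of the hypothesis of Lemma~\ref{matrix to op for finite p} as written. You should flag this case rather than fold it into the ``routine'' remark.
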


Unfortunately, since finitely supported vectors are no longer dense in $\linf$ generally, things become complicated when $p=\infty$. However, we still have:

\begin{lem}\label{op to matrix coeff}
For $T \in \Sinf$, $\xi \in \linf$ and $x\in X$, the series $\sum_{y \in X}T_{xy}\xi(y)$ converges to $(T\xi)(x)$, where $T_{xy}$ is the $xy$-matrix coefficient of $T$. Furthermore, $\|T\|=\sup_{F\in \F}\|P_FTP_F\|$.
\end{lem}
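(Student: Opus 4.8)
The plan is to exploit the sequential-continuity property defining $\Sinf$ together with the fact that $\linfz = \lz = c_0(X;E)$ (Lemma \ref{Y0}), even though finitely supported vectors are not dense in $\linf$. Fix $T \in \Sinf$, $\xi \in \linf$ and $x \in X$. For any finite $F \in \F$ we have $P_F\xi \in \bigcup_{G\in\F}P_G(\linf) \subseteq \linfz$, so the computation recorded just before the lemma applies and gives $(TP_F\xi)(x) = \sum_{y\in F}T_{xy}\xi(y)$, a finite sum. Thus proving the series claim amounts to showing $(TP_F\xi)(x) \to (T\xi)(x)$ as $F \in \F$; since the index set $\F$ is directed and the partial sums over finite $F$ are exactly the net $(TP_F\xi)(x)$, this is the same as net-convergence of $\sum_{y\in X}T_{xy}\xi(y)$ to $(T\xi)(x)$. (One should remark that because $X$ has bounded geometry and $T$ is — in the applications — band-dominated, only countably many coefficients $T_{xy}$ are nonzero for fixed $x$, so the net limit is an honest series; but for this lemma the net formulation suffices.)

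First I would write $(T\xi)(x) - (TP_F\xi)(x) = (TQ_F\xi)(x) = R_x T Q_F \xi$. Now $R_x = R_x P_{\{x\}}$, so $|R_x T Q_F\xi| = |R_x P_{\{x\}} T Q_F \xi| \le \|P_{\{x\}} T Q_F\| \cdot \|\xi\|$. Taking $F' = \{x\}$ in the definition of $\Sinf$ gives $\lim_{F\in\F}\|P_{\{x\}}TQ_F\| = 0$, hence $(TQ_F\xi)(x) \to 0$ and the series converges to $(T\xi)(x)$ as claimed. This is the heart of the argument and it is short; the genuine subtlety — the reason the lemma is not completely trivial — is precisely that one cannot approximate $\xi$ itself in norm by finitely supported vectors, so the convergence must be extracted coordinatewise from the $\Sinf$ condition rather than from density.

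For the norm identity, one inequality is automatic: $\|P_FTP_F\| \le \|T\|$ for every $F$, so $\sup_F\|P_FTP_F\| \le \|T\|$. For the reverse, let $M := \sup_F\|P_FTP_F\|$ and fix $\xi \in \linf$ with $\|\xi\| \le 1$ and $x \in X$; I must bound $|(T\xi)(x)|$ by $M$. Using the first part, $(T\xi)(x) = \lim_{F\in\F}(TP_F\xi)(x)$. For a fixed finite $F$ containing $x$, $(TP_F\xi)(x) = (P_F T P_F\xi)(x)$ because applying $P_F$ on the left only discards coordinates outside $F$ and $x\in F$; hence $|(TP_F\xi)(x)| \le \|P_FTP_F\|\cdot\|P_F\xi\| \le M$. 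Letting $F$ grow gives $|(T\xi)(x)| \le M$, and taking the supremum over $x$ and over $\|\xi\|\le 1$ yields $\|T\| \le M$. Combining the two inequalities completes the proof. The main obstacle, as noted, is conceptual rather than computational: recognising that the $\Sinf$ hypothesis is exactly the substitute for density that makes both the series convergence and the norm formula go through.
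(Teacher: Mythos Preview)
Your argument is correct and is exactly the direct-from-definition computation the paper alludes to (the paper omits the proof, saying it ``follows directly from the definition''). One minor cosmetic point: since $E$ is a general Banach space you should write $\|(T\xi)(x)\|_E$ rather than $|(T\xi)(x)|$, and your parenthetical about countability is simpler than you suggest --- $X$ is countable by bounded geometry alone --- but neither affects the substance.
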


The proof follows directly from the definition, hence omitted.

\subsection{Band (Dominated) Operators}\label{BDO}
Now we introduce our main objects.
\begin{defn}[\cite{chandler2011limit}, \cite{vspakula2017metric}]
Let $A=[A_{xy}]_{x,y \in X}$ be an $X$-by-$X$ matrix with entries in $\B(E)$. $A$ is a \emph{band operator (on $X$)} if
\begin{enumerate}
  \item the norms $\|A_{xy}\|$ are uniformly bounded for $x,y\in X$;
  \item the \emph{propagation} of $A$, defined by $\ppg(A):=\sup\{d(x,y): A_{xy} \neq 0\}$,
  is finite.
\end{enumerate}
Let $\BD$ denote the collection of all band operators on $X$.
\end{defn}

\begin{ex}[\cite{vspakula2017metric}]\label{mul and pt}
\begin{enumerate}
  \item Let $f: X \to \B(E)$ be a bounded function. Then the diagonal matrix defined by
        \begin{equation*}
            A_{xy}=
            \begin{cases}
              ~f(x), & y=x, \\
              ~0, & y\neq x
            \end{cases}
        \end{equation*}
      is a band operator of propagation 0, called a \emph{multiplication operator}.
  \item Let $D,R$ be subsets of $X$, and $t:D \to R$ be a bijection such that $\sup_{x\in D}d(x,t(x))$ is finite. Define a matrix $V$ by
      \begin{equation*}
            V_{yx}=
            \begin{cases}
              ~\Id_E, & x\in D \mbox{~and~} y=t(x), \\
              ~0, & \mbox{otherwise}.
            \end{cases}
        \end{equation*}
      Then $V$ is a band operator, called a \emph{partial translation operator}.
\end{enumerate}
\end{ex}

The above two classes of operators indeed generate $\BD$ as an algebra. The following lemma is very well-known:
\begin{lem}[\cite{vspakula2017metric}]\label{dec of BO}
Let $A\in\BD$ have propagation at most $r$ and $N=\sup_{x\in X} \sharp B(x,r)$. Then there exist multiplication operators $f_1,\ldots,f_N$ with $\|f_k\| \leq \sup_{x,y}\|A_{xy}\|$, and partial translation operators $V_1,\ldots,V_N$ of propagation at most $r$ such that:
$$A=\sum_{k=1}^N f_kV_k.$$
\end{lem}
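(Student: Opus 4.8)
The plan is to decompose the "band" $\{(x,y) : d(x,y) \le r\}$ into $N$ partial matchings (graphs of partial translations), and then peel off the corresponding pieces of $A$, each of which is a multiplication operator composed with a partial translation. First I would fix $r = \ppg(A)$ and $N = \sup_{x \in X} \sharp B(x,r)$, which is finite by bounded geometry. The key combinatorial step is to show that the "band relation" $\sim_r$ on $X$, where $x \sim_r y$ iff $d(x,y) \le r$, can be written as a union of $N$ graphs of partial bijections $t_k : D_k \to R_k$ with $\sup_{x \in D_k} d(x, t_k(x)) \le r$. I would prove this by viewing the band relation as (the edge set of) a graph on $X$ with all degrees at most $N$; such a graph has a proper edge colouring with $N$ colours — here one can avoid invoking Vizing/König and instead give a direct greedy argument using bounded geometry and strong discreteness (or, more simply, enumerate $B(x,r) = \{x\} \cup \{$ neighbours $\}$ for each $x$ and use strong discreteness to list, for each $x$, its at-most-$N$ "partners" in a consistent way), yielding partial bijections $t_1, \dots, t_N$ whose graphs partition $\{(x,y): 0 < d(x,y) \le r, A_{xy}\neq 0\}$ together with the diagonal handled in one of the $t_k$ (or separately).

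Next, for each $k$ let $V_k$ be the partial translation operator associated to $t_k$ as in Example \ref{mul and pt}(2), so $V_k$ has propagation at most $r$, and define $f_k : X \to \B(E)$ by $f_k(y) = A_{t_k(x), x}$ when $y = t_k(x)$ for some (necessarily unique) $x \in D_k$, and $f_k(y) = 0$ otherwise; since $\|A_{xy}\| \le \sup_{x,y}\|A_{xy}\|$ for all $x,y$, the multiplication operator $f_k$ satisfies $\|f_k\| \le \sup_{x,y}\|A_{xy}\|$. A direct computation of matrix coefficients then shows $(f_k V_k)_{yx} = A_{yx}$ when $y = t_k(x)$ and $0$ otherwise, so that $\sum_{k=1}^N f_k V_k$ has the same matrix coefficients as $A$; since both sides are band operators (bounded matrix coefficients, finite propagation), the equality of operators follows from the fact that on $\ell^p$-spaces an operator is determined by its matrix coefficients in the relevant sense — more carefully, one checks the identity on finitely supported vectors, which suffices by Lemma \ref{matrix to op for finite p} for $p \in \dom$ and by Lemma \ref{op to matrix coeff} for $p = \infty$ (noting band operators lie in $\Sinf$).

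I expect the main obstacle to be the combinatorial colouring step: making precise, with uniform control of propagation, the claim that the band relation decomposes into exactly $N$ graphs of partial translations. The subtlety is that a naive greedy assignment might require more than $N$ colours if done carelessly, so one should be a little careful — e.g. colour the edges of the (simple, symmetric, degree-$\le N$) band graph so that edges at a common vertex get distinct colours, which is possible with $N$ colours for any graph of maximum degree $\le N-1$, and with $N$ colours after a small trick (such as treating the diagonal loop at $x$, i.e. $A_{xx}$, as using up one colour uniformly, or absorbing all diagonal entries into $f_1 V_1$ with $V_1$ the identity on its domain) when we want to also accommodate $y = x$. Everything else — the norm bound $\|f_k\| \le \sup_{x,y}\|A_{xy}\|$, the propagation bound, and the matrix-coefficient bookkeeping — is routine.
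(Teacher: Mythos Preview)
The paper does not prove this lemma --- it is quoted from \cite{vspakula2017metric} as ``very well-known'' --- so there is no in-paper argument to compare against. Your overall strategy (decompose the support of $A$ into $N$ graphs of partial translations, then read off the multiplication operators) is the standard one, and your bookkeeping for $f_k$, the norm and propagation bounds, and the matrix-coefficient verification is correct.

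The one place that needs repair is the combinatorial step. Framing it as an edge-colouring of the \emph{simple undirected} band graph on $X$ is not quite the right picture: the loop-free graph has maximum degree $\le N-1$, so Vizing bounds its edge-chromatic number by $N$ but not by $N-1$; your proposed tricks for the diagonal (dedicating one $V_k$ to the identity, or reserving one colour uniformly for the loops) would then force the off-diagonal part into the remaining $N-1$ pieces, which is not guaranteed to suffice. The clean formulation is \emph{bipartite}: take two copies of $X$ (columns and rows) with an edge from $x$ to $y$ whenever $d(x,y)\le r$; both sides have degree $\le N$, and K\"onig's edge-colouring theorem (which extends to locally finite infinite bipartite graphs by a routine compactness argument) gives a proper $N$-colouring. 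Each colour class is then directly the graph of a partial translation $t_k:D_k\to R_k$ of propagation at most $r$, with the diagonal absorbed automatically and no extra trick required. Strong discreteness plays no role in this step --- only bounded geometry is used. With this adjustment your argument goes through as written.
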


Consequently, a band operator can always be realised as a bounded linear operator on any $\ell^p$-space:
\begin{cor}
Let $A \in \BD$ have propagation at most $r$, and $p \in \Dom$. Then the operator on $\lp$ defined by matrix multiplication by $A$ is bounded and belongs to $\Lp$, with norm at most $\sup_{x,y}\|A_{xy}\| \cdot \sup_{x\in X} \sharp B(x,r)$.
\end{cor}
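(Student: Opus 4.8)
The plan is to reduce everything to the decomposition provided by Lemma \ref{dec of BO}. Write $A = \sum_{k=1}^N f_k V_k$ where each $f_k$ is a multiplication operator with $\|f_k\| \le \sup_{x,y}\|A_{xy}\|$ and each $V_k$ is a partial translation operator of propagation at most $r$, and $N = \sup_{x\in X}\sharp B(x,r)$. Since $\Lp$ is a linear subspace of $\B(\lp)$ and the norm is submultiplicative, it suffices to (a) check that a multiplication operator and a partial translation operator each define bounded operators on $\lp$ belonging to $\Lp$, and (b) track the norm bounds through the sum. For (a), boundedness of a multiplication operator $\rho(f)$ on $\lp$ is clear with norm $\le \|f\|_\infty$; boundedness of a partial translation operator $V$ associated to a bijection $t: D\to R$ follows because $V$ simply relabels coordinates, so $\|V\xi\| \le \|\xi\|$ for $p \in \Dom$ (for $p<\infty$ the supported values are permuted, for $p=\infty$ the supremum can only decrease). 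Hence $\|f_k V_k\| \le \|f_k\| \le \sup_{x,y}\|A_{xy}\|$, and summing over $k$ gives $\|A\| \le \sup_{x,y}\|A_{xy}\|\cdot N$, which is the claimed bound.

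Next I would verify the membership $A \in \Lp$, again via the decomposition: since $\Lp$ is a linear subspace, it is enough to show $f_k V_k \in \Lp$ for each $k$, and for this it suffices to show that both multiplication operators and partial translation operators lie in $\Lp$ (as $\Lp$ is a subalgebra by the earlier lemma, products stay inside). For a multiplication operator $\rho(f)$ and fixed $F' \in \F$, note that $P_{F'}\rho(f) = \rho(f)P_{F'}$ has range inside $P_{F'}(\lp)$, so $P_{F'}\rho(f)Q_F = P_{F'}\rho(f)P_{F'}Q_F = 0$ as soon as $F \supseteq F'$, and similarly for $Q_F\rho(f)P_{F'}$; thus the defining limits are eventually zero. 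For a partial translation operator $V$ with propagation at most $r$, observe that $P_{F'} V E_y = 0$ unless $y$ lies within distance $r$ of $F'$, i.e. unless $y \in F'_r := \{y : d(y, F') \le r\}$, and $F'_r$ is finite by bounded geometry; hence $P_{F'} V Q_F = 0$ whenever $F \supseteq F'_r$. A symmetric argument (using that $V^{-1}$ moves points by at most $r$ as well, or directly that $R_x V = 0$ unless $x$ is within $r$ of the preimage) handles $Q_F V P_{F'}$. So both generators lie in $\Lp$, and therefore so does $A$.

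Finally I would assemble the pieces: boundedness with the stated norm bound from the first paragraph, and $A \in \Lp$ from the second. The argument is uniform in $p \in \Dom$ because every estimate used — the contractivity of partial translations, the norm bound on multiplication operators, and the finiteness of $r$-neighbourhoods — holds for $\ell^p$, $c_0$, and $\ell^\infty$ alike; in particular it does not rely on density of finitely supported vectors, so no subtlety specific to $p=\infty$ enters here.

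I do not expect any genuine obstacle in this corollary: it is a bookkeeping exercise built entirely on Lemma \ref{dec of BO} and the already-established fact that $\Lp$ is a Banach subalgebra. The only point requiring a moment's care is confirming that a partial translation operator is contractive on $\ell^\infty$ (where one must argue with suprema over possibly different index sets $D$ and $R$ rather than with a bijective reindexing of an $\ell^p$-sum), and that its $r$-neighbourhood conditions are symmetric in domain and range — both of which are immediate from the definition in Example \ref{mul and pt} together with bounded geometry.
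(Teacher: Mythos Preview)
Your proof is correct and takes exactly the approach the paper intends: the corollary is stated as an immediate consequence of Lemma~\ref{dec of BO}, and your argument---decomposing $A$ into $\sum f_k V_k$, checking that multiplication and partial translation operators are bounded with the right norms and lie in $\Lp$, then summing---is precisely how one fills in the details. The paper omits the proof entirely, so your write-up is if anything more thorough than what appears there.
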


From now on, $\BD$ is regarded as a subalgebra of $\Lp$ for $p \in \Dom$. Conversely, an operator $A \in \Lp$ is a band operator if there exists some $r>0$ such that $A_{xy}=0$ for $d(x,y)>r$. Taking norm closures, we have:

\begin{defn}
For $p \in \Dom$, the norm-closure of $\BD$ in $\B(\lp)$ is denoted by $\Ap$. Elements in $\Ap$ are called \emph{band-dominated operators}.
\end{defn}

Combined with Lemma \ref{ideal}, we end up with the following inclusions:
\begin{lem}[\cite{chandler2011limit}]
For the above subalgebras, we have $\Kp \subseteq \Ap \subseteq \Lp \subseteq \Sp$. Furthermore, $\Kp$ is a two-sided ideal in $\Lp$, hence a two-sided ideal in $\Ap$ as well.
\end{lem}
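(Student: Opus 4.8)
The plan is to establish the chain of inclusions one link at a time, leaning on the facts already assembled in this section. The inclusion $\Lp \subseteq \Sp$ needs no argument: membership in $\Sp$ imposes only the first of the two conditions defining $\Lp$. The inclusion $\Ap \subseteq \Lp$ is a closure argument: the Corollary preceding the definition of $\Ap$ shows $\BD \subseteq \Lp$, and since $\Lp$ is a Banach (in particular norm-closed) subalgebra of $\B(\lp)$, passing to norm-closures gives $\Ap = \overline{\BD} \subseteq \overline{\Lp} = \Lp$. The two-sidedness of $\Kp$ as an ideal in $\Lp$ is exactly Lemma~\ref{ideal}; and once $\Kp \subseteq \Ap \subseteq \Lp$ is known, restricting the multiplication of $\Lp$ to the subalgebra $\Ap$ immediately yields that $\Kp$ is a two-sided ideal in $\Ap$ as well.

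The one link that requires a genuine computation is $\Kp \subseteq \Ap$. Given $K \in \Kp$, I would approximate $K$ in operator norm by its finite cutdowns $P_F K P_F$, $F \in \F$. Writing $K - P_F K P_F = Q_F K + P_F K Q_F$ and using $\|P_F\| \le 1$ gives the estimate $\|K - P_F K P_F\| \le \|Q_F K\| + \|K Q_F\|$, and both terms tend to $0$ along $\F$ by the very definition of $\P$-compactness, so $K = \lim_{F \in \F} P_F K P_F$ in norm. On the other hand, each $P_F K P_F$ has matrix coefficients supported on the finite set $F \times F$, hence has propagation at most $\diam(F) < \infty$ and matrix coefficients of norm at most $\|K\|$; thus $P_F K P_F \in \BD$. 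Therefore $K$ lies in the norm-closure of $\BD$, i.e.\ $K \in \Ap$, completing the inclusion.

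The only point worth handling with care is the norm-convergence $P_F K P_F \to K$ when $p = \infty$, since there the finitely supported vectors fail to be dense in $\linf$; but the estimate above is carried out entirely at the level of operator norms through $\P$-compactness and does not invoke density of $\bigcup_{F \in \F} P_F(\linf)$, so it goes through verbatim for all $p \in \Dom$. I expect this to be the part worth spelling out, while the remaining inclusions and the ideal statement are bookkeeping on top of the Corollary and Lemma~\ref{ideal}.
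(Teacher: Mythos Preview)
Your proof is correct. The paper itself does not supply a proof of this lemma---it is simply attributed to \cite{chandler2011limit}---so there is nothing to compare against directly. That said, your argument is the standard one and exactly what one would expect: the inclusions $\Ap \subseteq \Lp \subseteq \Sp$ and the ideal statement are immediate from the surrounding material (the Corollary and Lemma~\ref{ideal}), while the only substantive step is $\Kp \subseteq \Ap$, which you handle via the decomposition $K - P_F K P_F = Q_F K + P_F K Q_F$ and the observation that each $P_F K P_F$ is a band operator. Your remark that the $p=\infty$ case causes no trouble because the estimate is purely at the level of operator norms is well placed and correct.
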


\section{Limit spaces and limit operators}

Throughout this section, $X$ is a space, $\beta X$ is the associated Stone-\v{C}ech compactification and $\partial X=X \setminus \beta X$ is the boundary. We use the classical identification for $\beta X$ in terms of ultrafilters on $X$. More details can be found in literatures, for example \cite[Appendix A]{brown2008cstar} or \cite[Appendix A]{vspakula2017metric}.

\subsection{Limit spaces}\label{limit spaces}
The notion of limit spaces for general metric spaces were introduced by \v{S}pakula and Willett. In the case of groups, all limit spaces are isometric to the group itself due to homogeneity. Here we only collect the notions and results required later, while more details and illuminating examples can be found in \cite[Section 3]{vspakula2017metric}.

Recall from Example \ref{mul and pt} that a function $t: D \to R$ with $D,R \subseteq X$ is called a \emph{partial translation} if $t$ is a bijection from $D$ to $R$, and $\sup_{x\in X}d(x,t(x))$ is finite.

\begin{defn}[\cite{vspakula2017metric}]
Fix an ultrafilter $\omega \in \beta X$. A partial translation $t:D \to R$ on $X$ is \emph{compatible with $\omega$} if $\omega(D)=1$. In this case, regarding $t$ as a function from $D$ to $\beta X$, define
$$t(\omega):=\lim_\omega t \in \beta X.$$
An ultrafilter $\alpha \in \beta X$ is \emph{compatible with $\omega$} if there exists a partial translation $t$ which is compatible with $\omega$ and $t(\omega)=\alpha$.
\end{defn}

\begin{rem}\label{limit comp}
As pointed out in \cite{vspakula2017metric}, an ultrafilter $\alpha \in \beta X$ is compatible with $\omega \in \beta X$ \emph{if and only if} there exists a partial translation $t:D \to R$ such that $\omega(D)=1$ and $\alpha(S)=1$ iff $\omega(t^{-1}(S \cap R))=1$ for any $S \subseteq X$. Therefore compatibility is an equivalence relation.
\end{rem}

Let us recall the following uniqueness statement:
\begin{lem}[\cite{vspakula2017metric}]\label{unique}
Let $\omega$ be an ultrafilter on $X$, and $t: D_t \to R_t$ and $s: D_s \to R_s$ be two partial translations compatible with $\omega$ such that $s(\omega)=t(\omega)$. Then if
$$D:=\{x \in D_t \cap D_s: t(x)=s(x)\},$$
we have that $\omega(D)=1$.
\end{lem}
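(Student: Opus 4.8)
The plan is to argue by contradiction using the defining property of ultrafilters: a subset of $X$ either has $\omega$-measure $1$ or its complement does. Suppose $\omega(D)\neq 1$, so that $\omega(D)=0$ where $D=\{x\in D_t\cap D_s: t(x)=s(x)\}$. Since $t$ and $s$ are both compatible with $\omega$, we have $\omega(D_t)=\omega(D_s)=1$, hence $\omega(D_t\cap D_s)=1$. Writing $D_t\cap D_s$ as the disjoint union of $D$ and $D':=\{x\in D_t\cap D_s: t(x)\neq s(x)\}$, we conclude that $\omega(D')=1$.

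The next step is to feed $D'$ into the computation of $t(\omega)$ and $s(\omega)$ and derive a contradiction with $s(\omega)=t(\omega)$. I would consider the sets $R':=t(D')$ and $R'':=s(D')$, i.e. the images of $D'$ under the two partial translations. By the characterisation of $t(\omega)$ as the ultrafilter $\beta$ with $\beta(S)=1$ iff $\omega(t^{-1}(S\cap R_t))=1$ (Remark \ref{limit comp}), and since $\omega(D')=1$ forces $\omega(t^{-1}(R'))=1$, we get $t(\omega)(R')=1$; similarly $s(\omega)(R'')=1$. Because $s(\omega)=t(\omega)=:\alpha$, this gives $\alpha(R'\cap R'')=1$, in particular $R'\cap R''\neq\emptyset$. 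The key point I must then extract is a uniform bound forcing $R'$ and $R''$ to be ``far apart'' on $D'$: for $x\in D'$ we have $t(x)\neq s(x)$, and since $X$ is strongly discrete, the distances $\{d(z,w):z,w\in X\}$ form a discrete set, so there is $\delta>0$ with $d(t(x),s(x))\geq\delta$ for all $x\in D'$. Combined with $\sup_x d(x,t(x))<\infty$ and $\sup_x d(x,s(x))<\infty$ and bounded geometry, each point of $X$ lies within a fixed radius of only boundedly many points, so one can cleanly separate $R'$ from $R''$ — more precisely, I would show that the element $\alpha\in\beta X$, being a limit of both $t|_{D'}$ and $s|_{D'}$, would have to satisfy $d(t(\omega),s(\omega))\geq\delta>0$ in the extended sense, contradicting $t(\omega)=s(\omega)$.

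To make the last contradiction fully rigorous it is cleanest to work pointwise rather than with the (not literally metric) boundary. Fix any $w\in R'\cap R''$. Then $w=t(x)$ for some $x\in D'$ and $w=s(x')$ for some $x'\in D'$; but in fact one wants the \emph{same} $x$, which is where one must be careful. The honest route: partition $D'$ according to the (finitely many, by strong discreteness and the two bounds on $d(\cdot,t(\cdot))$, $d(\cdot,s(\cdot))$) possible values of the pair $(d(x,t(x)),d(x,s(x)))$ together with the relative position, pick by the ultrafilter a piece $D''\subseteq D'$ with $\omega(D'')=1$ on which these are constant, and observe that on $D''$ the map $x\mapsto t(x)$ and $x\mapsto s(x)$ are injective with images at Hausdorff-type distance $\geq\delta$; then $t(\omega)=\lim_\omega t|_{D''}$ and $s(\omega)=\lim_\omega s|_{D''}$ are distinct points of $\beta X$ because the sets $t(D'')$ and $s(D'')$ are disjoint (indeed $\delta$-separated) and each carries the respective limit with measure $1$. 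This disjointness of a pair of sets each of full measure under $t(\omega)$ and $s(\omega)$ respectively directly contradicts $t(\omega)=s(\omega)$.

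The main obstacle I anticipate is precisely this separation argument: turning ``$t(x)\neq s(x)$ for every $x\in D'$'' into ``the sets $t(D')$ and $s(D')$ can be arranged to be disjoint after passing to a full-measure subset''. Without some regularity this could fail — $t$ and $s$ might weave so that $t(D')$ and $s(D')$ overlap badly — and it is exactly the hypotheses baked into ``$X$ is a space'' (strong discreteness, giving the gap $\delta$, and bounded geometry, bounding multiplicities) that rescue the argument. Once that combinatorial separation is in place, the ultrafilter bookkeeping is routine. An alternative, possibly slicker, packaging is to invoke Lemma \ref{unique}-style reasoning in reverse via Remark \ref{limit comp}: the composition $s\circ t^{-1}$ (suitably restricted) is a partial translation fixing $t(\omega)$, and a partial translation compatible with an ultrafilter that fixes it must equal the identity on a full-measure set — but proving that auxiliary fact seems to require essentially the same separation estimate, so I would simply carry out the direct argument above.
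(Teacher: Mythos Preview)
The paper does not give a proof of this lemma; it is quoted from \v{S}pakula--Willett with a citation and no argument, so there is no in-paper proof to compare against.

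Assessing your proposal on its own: the overall architecture (pass to $D'$, push forward by $t$ and $s$, find a set witnessing $t(\omega)\neq s(\omega)$) is correct, but the separation step you yourself flag as the obstacle is not established by what you wrote. Partitioning $D'$ by the pair $(d(x,t(x)),d(x,s(x)))$ ``together with the relative position'' does not produce a full-measure $D''$ with $t(D'')\cap s(D'')=\emptyset$: in a general strongly discrete space there is no workable notion of ``relative position'', and fixing the distances alone is insufficient. For a concrete failure, take $X=\mathbb{Z}$, $t(x)=x+1$, $s(x)=x-1$; then $d(x,t(x))=d(x,s(x))\equiv 1$ and $t(x)\neq s(x)$ everywhere, yet $t(\mathbb{Z})=s(\mathbb{Z})=\mathbb{Z}$, so no distance-based refinement separates the images. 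Your ``Hausdorff-type distance $\geq\delta$'' claim conflates the pointwise gap $d(t(x),s(x))\geq\delta$ with separation of the image \emph{sets}, which is false.

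The clean repair is a colouring argument rather than a metric one. Set $r:=s^{-1}\circ t$ on $D''':=D'\cap t^{-1}(R_s)$; one checks $\omega(D''')=1$, that $r$ is a fixed-point-free partial translation, and (using $t(\omega)=s(\omega)$) that $r(\omega)=\omega$. The graph on $D'''$ with edges $\{x,r(x)\}$ has maximum degree $2$ (each vertex sees at most $r(x)$ and $r^{-1}(x)$), hence is $3$-colourable; each colour class $D_j$ satisfies $r(D_j)\cap D_j=\emptyset$. Choosing the class with $\omega(D_j)=1$ gives $\omega(r(D_j))=r(\omega)(r(D_j))=1$ as well, contradicting disjointness. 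This is the missing combinatorial ingredient; note that neither strong discreteness nor the $\delta$-gap is actually needed here --- injectivity of $r$ already bounds the degree.
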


\begin{defn}[\cite{vspakula2017metric}]
Fix an ultrafilter $\omega$ on $X$. Write $X(\omega)$ for the collection of all ultrafilters on $X$ compatible with $\omega$. A \emph{compatible family for $\omega$} is a collection of partial translations $\{t_\alpha\}_{\alpha \in X(\omega)}$ such that each $t_\alpha$ is compatible with $\omega$ and $t_\alpha(\omega)=\alpha$.
\end{defn}

A metric can be imposed on $X(\omega)$ as follows:
\begin{prop}[\cite{vspakula2017metric}]
Fix an ultrafilter $\omega$ on $X$, and a compatible family $\{t_\alpha\}_{\alpha \in X(\omega)}$. Define a function $d_{\omega}: X(\omega) \times X(\omega) \to [0,\infty)$ by
$$d_{\omega}(\alpha, \beta):=\lim_{x\to \omega}d(t_\alpha(x),t_\beta(x)).$$
Then $d_\omega$ is a metric on $X(\omega)$ that does not depend on the choice of $\{t_{\alpha}\}$. Moreover,
$$\{d_\omega(\alpha, \beta): \alpha, \beta \in X(\omega)\} \subseteq \{d(x,y): x,y \in X\}$$
and
$$\max_{\alpha \in X(\omega)} \sharp B_{X(\omega)}(\alpha,r) \leq \max_{x\in X} \sharp B_X(x,r).$$
\end{prop}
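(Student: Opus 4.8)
The plan is to exploit two structural features of a space $X$: strong discreteness, which forces $\{d(x,y):x,y\in X\}$ to be a discrete (hence locally finite) subset of $\R$, and the fact that each $t_\alpha$ moves points a bounded distance. First I would fix $\alpha,\beta\in X(\omega)$ and observe that the function $x\mapsto d(t_\alpha(x),t_\beta(x))$, defined on $D_{t_\alpha}\cap D_{t_\beta}$ (a set which lies in $\omega$ since $\omega(D_{t_\alpha})=\omega(D_{t_\beta})=1$), is bounded: by the triangle inequality it is at most $\sup_{x}d(x,t_\alpha(x))+\sup_{x}d(x,t_\beta(x))<\infty$. Being bounded with values in a discrete subset of $\R$, it takes only finitely many values, so exactly one level set lies in $\omega$; this shows $\lim_{x\to\omega}d(t_\alpha(x),t_\beta(x))$ exists, so $d_\omega$ is well-defined, and moreover that this limit is literally \emph{attained} on a set in $\omega$ and equals some $d(x,y)$, which already gives the inclusion $\{d_\omega(\alpha,\beta):\alpha,\beta\in X(\omega)\}\subseteq\{d(x,y):x,y\in X\}$.

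Next I would check the metric axioms by transporting pointwise (in)equalities through the ultralimit. Symmetry and non-negativity are immediate. For the triangle inequality, on $D_{t_\alpha}\cap D_{t_\beta}\cap D_{t_\gamma}\in\omega$ one has $d(t_\alpha(x),t_\gamma(x))\le d(t_\alpha(x),t_\beta(x))+d(t_\beta(x),t_\gamma(x))$, and since ultralimits preserve $\le$ and are additive, $d_\omega(\alpha,\gamma)\le d_\omega(\alpha,\beta)+d_\omega(\beta,\gamma)$. For the separation axiom: if $\alpha=\beta$ then $t_\alpha(\omega)=t_\beta(\omega)$, so by Lemma \ref{unique} the set on which $t_\alpha$ and $t_\beta$ agree lies in $\omega$, forcing $d_\omega(\alpha,\beta)=0$; conversely, if $d_\omega(\alpha,\beta)=0$ then by the previous paragraph $\{x:t_\alpha(x)=t_\beta(x)\}\in\omega$, and since $\alpha=\lim_\omega t_\alpha$ and $\beta=\lim_\omega t_\beta$ are computed along $\omega$, this yields $\alpha=\beta$. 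Independence of the compatible family is the same argument: given another compatible family $\{s_\alpha\}$, we have $s_\alpha(\omega)=\alpha=t_\alpha(\omega)$ and $s_\beta(\omega)=\beta=t_\beta(\omega)$, so by Lemma \ref{unique} both $\{x:s_\alpha(x)=t_\alpha(x)\}$ and $\{x:s_\beta(x)=t_\beta(x)\}$ lie in $\omega$; on their (still $\omega$-large) intersection $d(s_\alpha(x),s_\beta(x))=d(t_\alpha(x),t_\beta(x))$, so the two ultralimits coincide.

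Finally, for the ball estimate, fix $\alpha\in X(\omega)$ and $r\ge 0$ and let $\beta_1,\dots,\beta_n$ be distinct elements of $B_{X(\omega)}(\alpha,r)$. Using that each of the relevant ultralimits is attained on an $\omega$-large set, the sets $A_i:=\{x:d(t_\alpha(x),t_{\beta_i}(x))=d_\omega(\alpha,\beta_i)\le r\}$ lie in $\omega$, and (by the separation axiom and $\beta_i\neq\beta_j$) the sets $A_{ij}:=\{x:d(t_{\beta_i}(x),t_{\beta_j}(x))=d_\omega(\beta_i,\beta_j)>0\}$ lie in $\omega$. Since $\omega$ is closed under finite intersections, $A:=\bigcap_i A_i\cap\bigcap_{i<j}A_{ij}\in\omega$, in particular $A\neq\emptyset$; picking any $x\in A$, the points $t_{\beta_1}(x),\dots,t_{\beta_n}(x)$ are $n$ distinct elements of $B_X(t_\alpha(x),r)$, so $n\le \sharp B_X(t_\alpha(x),r)\le \max_{y\in X}\sharp B_X(y,r)$ (finite by bounded geometry). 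As $n$ was arbitrary, $\sharp B_{X(\omega)}(\alpha,r)\le \max_{y\in X}\sharp B_X(y,r)$, and taking the maximum over $\alpha$ completes the proof.

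The only genuinely delicate point is the repeatedly used fact that the ultralimit of $x\mapsto d(t_\alpha(x),t_\beta(x))$ is attained on a set in $\omega$ rather than merely approximated; this is exactly where strong discreteness of $X$ enters, and once it is in hand everything else reduces to pushing pointwise statements through the ultralimit together with Lemma \ref{unique}.
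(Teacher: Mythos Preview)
The paper does not actually supply a proof of this proposition: it is quoted verbatim from \v{S}pakula--Willett \cite{vspakula2017metric} as background material, with no argument given. Your proof is correct and is essentially the standard one from that source --- the key observation that strong discreteness forces $x\mapsto d(t_\alpha(x),t_\beta(x))$ to take only finitely many values on a bounded range (so the ultralimit is attained on an $\omega$-large set), followed by routine verification of the metric axioms via Lemma~\ref{unique} and the ball bound via a finite-intersection argument, is exactly how this goes.
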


\begin{defn}[\cite{vspakula2017metric}]
For each non-principal ultrafilter $\omega$ on $X$, the metric space $(X(\omega),d_{\omega})$ is called the \emph{limit space} of $X$ at $\omega$.
\end{defn}

It is shown in \cite[Proposition 3.9]{vspakula2017metric} that $X(\omega)$ does not depend on the choice of $\omega$ in the sense that for any $\alpha \in X(\omega)$, $X(\alpha)=X(\omega)$. Now we recall the following result stating that the local geometry of $X$ can be captured by that of its limit space.

\begin{prop}[\cite{vspakula2017metric}]\label{local isom}
Let $\omega$ be a non-principal ultrafilter on $X$, and $\{t_\alpha:D_\alpha \to R_\alpha\}$ a compatible family for $\omega$. Then for each finite $F \subseteq X(\omega)$, there exists a subset $Y \subseteq X$ with $\omega(Y)=1$, such that for each $y\in Y$, there is a finite subset $G(y) \subseteq X$ such that the map
$$f_y: F \to G(y), \alpha \mapsto t_\alpha(y)$$
is a surjective isometry. Such a collection $\{f_y\}_{y\in Y}$ is called a \emph{local coordinate systerm } for $F$, and the maps $f_y$ are called \emph{local coordinates}.

Furthermore, if $F$ is a metric ball $B(\omega,r)$, then there exist $Y \subseteq X$ with $\omega(Y)=1$ and a local coordinate system $\{f_y: F \to G(y)\}_{y\in Y}$ such that each $G(y)$ is the ball $B(y,r)$.
\end{prop}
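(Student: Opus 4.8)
\emph{Proof proposal.} The plan is to exploit the two standing hypotheses on $X$ — strong discreteness and bounded geometry — in order to upgrade the defining ultrafilter-limits to statements that hold \emph{exactly} on an $\omega$-large set, and then to intersect the finitely many $\omega$-large sets that arise. For the first assertion, enumerate $F=\{\alpha_1,\dots,\alpha_n\}$ and take the partial translations $t_{\alpha_i}\colon D_{\alpha_i}\to R_{\alpha_i}$ from the given compatible family, so that $\omega(D_{\alpha_i})=1$ and $t_{\alpha_i}(\omega)=\alpha_i$. For each pair $i<j$ the function $x\mapsto d(t_{\alpha_i}(x),t_{\alpha_j}(x))$ is bounded (by $\sup_x d(x,t_{\alpha_i}(x))+\sup_x d(x,t_{\alpha_j}(x))<\infty$) and takes values in the discrete set $\{d(x,y):x,y\in X\}$, hence assumes only finitely many values; therefore its $\omega$-limit $d_\omega(\alpha_i,\alpha_j)$ is attained on a set $Y_{ij}$ with $\omega(Y_{ij})=1$. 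Put $Y:=\bigcap_i D_{\alpha_i}\cap\bigcap_{i<j}Y_{ij}$, which is $\omega$-large, and for $y\in Y$ set $f_y(\alpha_i):=t_{\alpha_i}(y)$ and $G(y):=f_y(F)$. Then $f_y$ is onto $G(y)$ by construction and $d(f_y(\alpha_i),f_y(\alpha_j))=d_\omega(\alpha_i,\alpha_j)$ for all $i,j$; since $d_\omega$ is a genuine metric this forces $f_y$ to be injective, so $f_y\colon F\to G(y)$ is a surjective isometry.

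For the refinement to metric balls, take $F=B_{X(\omega)}(\omega,r)=\{\alpha_1,\dots,\alpha_n\}$ with $\alpha_1=\omega$; applying Lemma \ref{unique} to $t_\omega$ and the identity we may shrink $Y$ and assume $t_\omega=\mathrm{id}$ on $Y$, so $f_y(\omega)=y$. The argument above already gives $G(y)\subseteq B_X(y,r)$, because $d_\omega(\alpha_i,\omega)\le r$ yields $d(t_{\alpha_i}(y),y)\le r$ for $y\in Y$. For the reverse inclusion I would run a counting argument. Writing $N=\sup_{x}\sharp B_X(x,r)$, bounded geometry makes $\{y:\sharp B_X(y,r)=k\}$, $k=1,\dots,N$, a finite partition of $X$, so exactly one piece $Y_{k_0}$ is $\omega$-large; since $f_y$ embeds $F$ into $B_X(y,r)$ for $y\in Y\cap Y_{k_0}$ we get $n\le k_0$. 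Next, decompose the relation $\{(x,z):d(x,z)\le r\}$ as a finite union $\bigcup_{j=1}^{M}\mathrm{graph}(s_j)$ of graphs of partial translations $s_j$ of propagation $\le r$, with $s_1=\mathrm{id}$ — this is the standard decomposition underlying Lemma \ref{dec of BO}. Let $J=\{j:\omega(D_{s_j})=1\}$; for $j\in J$ the ultrafilter $s_j(\omega)$ lies in $B_{X(\omega)}(\omega,r)=F$ since $d_\omega(s_j(\omega),\omega)=\lim_{y\to\omega}d(s_j(y),y)\le r$. Exactly as in the first paragraph (using strong discreteness and Lemma \ref{unique}) there is an $\omega$-large set on which $B_X(y,r)=\{s_j(y):j\in J\}$ and $s_j(y)=s_{j'}(y)$ precisely when $s_j(\omega)=s_{j'}(\omega)$; intersecting it with $Y\cap Y_{k_0}$ produces a $y$ with $k_0=\sharp B_X(y,r)=\sharp\{s_j(\omega):j\in J\}\le n$. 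Hence $n=k_0$, so on the final $\omega$-large set the injection $f_y\colon F\to B_X(y,r)$ between two $k_0$-element sets is a bijection, i.e.\ $G(y)=B_X(y,r)$.

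The step I expect to be the crux is this reverse inclusion $B_X(y,r)\subseteq G(y)$, equivalently the cardinality identity $\sharp B_{X(\omega)}(\omega,r)=k_0$. The subtlety is that a partial translation $s$ with $s(\omega)\in B_{X(\omega)}(\omega,r)$ need not itself have propagation $\le r$ — only $d(s(y),y)\le r$ on an $\omega$-large set — so one cannot read the elements of $F$ off directly from partial translations of small propagation. Decomposing the $r$-closeness relation into finitely many small-propagation partial translations, then pigeonholing over those finitely many indices with respect to $\omega$ and invoking Lemma \ref{unique}, is precisely what bridges this gap, and it is here that both strong discreteness and bounded geometry are used in an essential way.
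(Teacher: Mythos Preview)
Your proof is correct. The paper does not supply its own proof of this proposition; it is quoted verbatim from \v{S}pakula--Willett and simply cited, so there is nothing in the present paper to compare your argument against. That said, your approach is exactly the standard one: strong discreteness turns each bounded distance function $y\mapsto d(t_{\alpha_i}(y),t_{\alpha_j}(y))$ into a finitely-valued map whose $\omega$-limit is attained on an $\omega$-large set, and a finite intersection then yields the isometry. For the ball refinement, the decomposition of the relation $\{(x,z):d(x,z)\le r\}$ into finitely many partial translations of propagation at most $r$ (the combinatorial fact behind Lemma~\ref{dec of BO}), combined with Lemma~\ref{unique} and the cardinality pigeonhole on $\sharp B_X(y,r)$, is precisely how the reverse inclusion $B_X(y,r)\subseteq G(y)$ is obtained in the source. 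Your identification of this step as the crux is accurate, and the argument you give for it is complete: the key observation that $s_j(\omega)=s_{j'}(\omega)$ forces $s_j=s_{j'}$ on an $\omega$-large set (Lemma~\ref{unique}), together with the converse (if they disagree at $\omega$ then the agreement set cannot be $\omega$-large), pins down $\sharp B_X(y,r)=\sharp\{s_j(\omega):j\in J\}\le \sharp F$ on an $\omega$-large set, closing the count.
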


Interesting examples of limits spaces can be found in \cite[Example 3.14]{vspakula2017metric}.

\subsection{Limit operators}\label{limit operators}
Now we introduce the notion of limit operators on metric spaces. The case of $p\in (1,\infty)$ was studied by \v{S}pakula and Willett \cite{vspakula2017metric}, while for the convenience to the readers, they are stated here as well. We start with the following condition of richness for an operator, which is designed to ensure that limit operators always exist.

\begin{defn}
Let $\omega$ be a non-principal ultrafilter on $X$. For $p \in \Dom$, a band-dominated operator $A$ in $\Ap$ is said to be \emph{rich at $\omega$}, if for any pair of partial translations $t,s$ compatible with $\omega$, the limit
$$\lim_{x \to \omega}A_{t(x)s(x)}$$
exists under the norm topology on $\B(E)$. Denote by $\Ap^{\$,\omega}$ the collection of all band-dominated operators rich at $\omega$.

If $A$ is rich at $\omega$ for all $\omega \in \partial X$, it is said to be \emph{rich}. Denote by $\Ap^{\$}$ the collection of all rich band-dominated operators.
\end{defn}

\begin{defn}
Let $\omega$ be a non-principal ultrafilter on $X$, $p \in \Dom$ and $A$ be a band-dominated operator on $\lp$ rich at $\omega$. Fix a compatible family $\{t_\alpha\}_{\alpha \in X(\omega)}$ for $\omega$. The \emph{limit operator of $A$ at $\omega$}, denoted by $\Phi_\omega(A)$, is an $X(\omega)$-by-$X(\omega)$ indexed matrix with entries in $\B(E)$ defined by
$$\Phi_\omega(A)_{\alpha\beta}:=\lim_{x \to \omega}A_{t_{\alpha}(x)t_{\beta}(x)}.$$
\end{defn}

From Lemma \ref{unique}, we know the above definition is proper:
\begin{lem}
Let $\omega$ be a non-principal ultrafilter on $X$, $p \in \Dom$ and $A$ a band-dominated operator on $\lp$ rich at $\omega$. Then the limit operator $\Phi_\omega(A)$ does not depend on the choice of compatible family of $\omega$.
\end{lem}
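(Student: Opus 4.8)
The plan is to reduce the statement to Lemma \ref{unique} together with two elementary facts about the ultrafilter $\omega$: a limit $\lim_{x\to\omega}$ depends only on the restriction of the net to any set $D\subseteq X$ with $\omega(D)=1$, and the intersection of finitely many $\omega$-large sets is again $\omega$-large.

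First I would fix two compatible families $\{t_\alpha\}_{\alpha\in X(\omega)}$ and $\{s_\alpha\}_{\alpha\in X(\omega)}$ for $\omega$ (recall that the index set $X(\omega)$ is intrinsic to $\omega$ and does not depend on any choice), and fix $\alpha,\beta\in X(\omega)$. Since $A$ is rich at $\omega$ and $t_\alpha,t_\beta,s_\alpha,s_\beta$ are all partial translations compatible with $\omega$, both limits $\lim_{x\to\omega}A_{t_\alpha(x)t_\beta(x)}$ and $\lim_{x\to\omega}A_{s_\alpha(x)s_\beta(x)}$ exist in $\B(E)$; it remains only to show they coincide.

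Next, since $t_\alpha$ and $s_\alpha$ are partial translations compatible with $\omega$ with $t_\alpha(\omega)=\alpha=s_\alpha(\omega)$, Lemma \ref{unique} yields that
$$D_\alpha:=\{x\in D_{t_\alpha}\cap D_{s_\alpha}: t_\alpha(x)=s_\alpha(x)\}$$
satisfies $\omega(D_\alpha)=1$, and likewise $\omega(D_\beta)=1$ for the analogously defined $D_\beta$. Hence $D:=D_\alpha\cap D_\beta$ has $\omega(D)=1$, and for every $x\in D$ one has $A_{t_\alpha(x)t_\beta(x)}=A_{s_\alpha(x)s_\beta(x)}$. Two nets that agree on a set of $\omega$-measure $1$ have the same limit along $\omega$, so $\Phi_\omega(A)_{\alpha\beta}$ computed from $\{t_\alpha\}$ equals the one computed from $\{s_\alpha\}$. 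As $\alpha,\beta\in X(\omega)$ were arbitrary, the matrix $\Phi_\omega(A)$ is independent of the compatible family.

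I do not expect a genuine obstacle here: the content is entirely carried by Lemma \ref{unique}, and the only point that deserves an explicit sentence is that ultrafilter limits are determined by the behaviour of the net on $\omega$-large sets, which is immediate from the definition of $\lim_\omega$.
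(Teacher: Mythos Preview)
Your proof is correct and is exactly the argument the paper has in mind: the paper simply states that the lemma follows from Lemma \ref{unique} without writing out the details, and your proposal spells out precisely that argument. There is nothing to add or change.
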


Up till now, $\Phi_\omega(A)$ is only an abstractly defined infinite matrix, rather than an operator on any space. We will follow the way in \cite{vspakula2017metric} to make it concrete in the case of $p\in \dom$ as follows, while leave the case of $p=\infty$ to the next subsection where more technical tools are developed.

\begin{prop}\label{local isom op}
Let $\omega$ be a non-principal ultrafilter on $X$, $p \in \Dom$ and $A$ a band-dominated operator on $\lp$ rich at $\omega$. Let $F$ be a finite subset of $X(\omega)$ and $\epsilon>0$. Let $\{t_\alpha\}$ be a compatible family of partial translations for $\omega$. Then there exists a local coordinate system $\{f_y:F \to G(y)\}_{y \in Y}$ such that for each $y \in Y$, if
$$U_y: \ell^p(F) \to \ell^p(G(y)), ~~(U_y\xi)(x):=\xi(f_y^{-1}(x))$$
is the linear isometry induced by $f_y$, then we have:
$$\|U_y^{-1}P_{G(y)}AP_{G(y)}U_y-P_F\Phi_\omega(A)P_F\| < \epsilon,$$
where $P_F\Phi_\omega(A)P_F$ is regarded as a finite $F$-by-$F$ matrix with entries in $\B(E)$, acting on $\ell^p_E(F)$ by matrix multiplication.
\end{prop}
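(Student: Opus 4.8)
The plan is to combine the purely geometric statement of Proposition \ref{local isom} with the definition of the matrix coefficients $\Phi_\omega(A)_{\alpha\beta}$, exploiting that $A$ is a band-dominated operator so that the ``interaction'' between the finite block $G(y)$ and its complement is uniformly small. First I would reduce to the case of a band operator: since $A \in \Ap$, for the given $\epsilon$ choose a band operator $B \in \BD$ with $\|A - B\| < \epsilon/4$. The propagation of $B$ is some finite $r$, and the estimate $\|U_y^{-1}P_{G(y)}AP_{G(y)}U_y - U_y^{-1}P_{G(y)}BP_{G(y)}U_y\| \le \|A-B\| < \epsilon/4$ holds for every $y$, while on the limit side one checks $\|P_F\Phi_\omega(A)P_F - P_F\Phi_\omega(B)P_F\| \le \|A-B\|$ (each entry $\Phi_\omega(A)_{\alpha\beta} - \Phi_\omega(B)_{\alpha\beta}$ is a limit along $\omega$ of $A_{t_\alpha(x)t_\beta(x)} - B_{t_\alpha(x)t_\beta(x)}$, and the full matrix norm is controlled by the operator norm over the finite index set $F$). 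So it suffices to prove the claim for $B$ with an error $\epsilon/2$.

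Next, apply Proposition \ref{local isom} to the finite set $F \subseteq X(\omega)$ and the fixed compatible family $\{t_\alpha\}$, obtaining $Y \subseteq X$ with $\omega(Y)=1$ and surjective isometries $f_y : F \to G(y)$, $\alpha \mapsto t_\alpha(y)$. The key computation is then to identify the matrix coefficients of $U_y^{-1} P_{G(y)} B P_{G(y)} U_y$: for $\alpha, \beta \in F$, the $\alpha\beta$-entry equals $B_{f_y(\alpha) f_y(\beta)} = B_{t_\alpha(y) t_\beta(y)}$. Meanwhile the $\alpha\beta$-entry of $P_F \Phi_\omega(B) P_F$ is $\Phi_\omega(B)_{\alpha\beta} = \lim_{x\to\omega} B_{t_\alpha(x) t_\beta(x)}$. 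So the difference is the $F$-by-$F$ matrix with entries $B_{t_\alpha(y)t_\beta(y)} - \lim_{x\to\omega} B_{t_\alpha(x)t_\beta(x)}$. For each of the finitely many pairs $(\alpha,\beta) \in F \times F$, this entry tends to $0$ along $\omega$, i.e. the set of $y$ on which it is smaller than a prescribed threshold has $\omega$-measure $1$; intersecting these finitely many sets with $Y$ gives a set $Y' \subseteq X$ with $\omega(Y')=1$ on which every entry is small. Passing from smallness of all entries of a fixed finite matrix to smallness of its operator norm on $\ell^p_E(F)$ costs only a factor depending on $\sharp F$ (using the corollary bound for band operators, or simply $\|M\| \le \sharp F \cdot \max_{\alpha\beta}\|M_{\alpha\beta}\|$), so choosing the threshold as $\epsilon/(2\sharp F)$ yields $\|U_y^{-1}P_{G(y)}BP_{G(y)}U_y - P_F\Phi_\omega(B)P_F\| < \epsilon/2$ for all $y \in Y'$. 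Relabelling $Y'$ as $Y$ (and shrinking the $G(y)$ accordingly) completes the argument.

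The main obstacle I anticipate is a bookkeeping subtlety rather than a conceptual one: one must be careful that the local coordinate system produced by Proposition \ref{local isom} is the \emph{same} one, coming from the \emph{fixed} compatible family $\{t_\alpha\}$, that is used to define $\Phi_\omega(A)$ — otherwise the identification $(U_y^{-1}P_{G(y)}BP_{G(y)}U_y)_{\alpha\beta} = B_{t_\alpha(y)t_\beta(y)}$ and the limit $\Phi_\omega(B)_{\alpha\beta} = \lim_{x\to\omega}B_{t_\alpha(x)t_\beta(x)}$ would not line up entrywise. Once the coordinates are matched, everything is a routine $\epsilon/3$-style estimate. A minor secondary point is that shrinking $Y$ by intersecting with finitely many $\omega$-large sets keeps $\omega(Y)=1$ because $\omega$ is an ultrafilter (finite intersections of measure-one sets have measure one), and that for the final sentence about metric balls one simply invokes the ``furthermore'' part of Proposition \ref{local isom} to arrange $G(y) = B(y,r)$ before running the same estimate.
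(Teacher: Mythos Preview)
Your core argument---applying Proposition \ref{local isom}, identifying the matrix entries of $U_y^{-1}P_{G(y)}AP_{G(y)}U_y$ as $A_{t_\alpha(y)t_\beta(y)}$, and then intersecting finitely many $\omega$-large sets to force all $|F|^2$ entries close to their limits---is correct and is exactly the route the paper takes (it defers to \cite[Proposition 4.6]{vspakula2017metric}).

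However, your opening reduction to a band operator $B$ is both unnecessary and subtly flawed. Nothing guarantees that an arbitrary band approximant $B$ is rich at $\omega$, so the entrywise limits $\lim_{x\to\omega}B_{t_\alpha(x)t_\beta(x)}$ need not exist in the norm topology of $\B(E)$, and the object $\Phi_\omega(B)$ in your estimate $\|P_F\Phi_\omega(A)P_F - P_F\Phi_\omega(B)P_F\|\le\|A-B\|$ is undefined. Patching this by invoking Theorem \ref{density thm} to choose a \emph{rich} band approximant would be a forward reference (and, for $p\in(1,\infty)$, would import Property A, which the proposition does not assume). The fix is simply to delete the reduction: since $A$ itself is rich at $\omega$, the limits $\lim_{x\to\omega}A_{t_\alpha(x)t_\beta(x)}$ exist by hypothesis for every pair $(\alpha,\beta)\in F\times F$, and your steps 2--5 go through verbatim with $A$ in place of $B$. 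Your stated motivation for the reduction---controlling ``interaction between $G(y)$ and its complement''---is a red herring here, because the statement compares only the finite compressions $P_{G(y)}AP_{G(y)}$ and $P_F\Phi_\omega(A)P_F$; no propagation bound enters. (Your final remark about metric balls is also extraneous: that ``furthermore'' belongs to Proposition \ref{local isom}, not to the present statement.)
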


The proof follows directly from Proposition \ref{local isom} and is the same as that of \cite[Proposition 4.6]{vspakula2017metric}, hence omitted. Consequently, we have the following concrete implementation for limit operators in the case of $p \in \dom$. The proof is just a combination of Lemma \ref{matrix to op for finite p} and the above proposition, hence omitted.
\begin{cor}\label{p finite}
Let $\omega$ be a non-principal ultrafilter on $X$, $p \in \dom$ and $A$ a band-dominated operator on $\lp$ rich at $\omega$. Then the matrix multiplication by $\Phi_\omega(A)$ defines a bounded operator on $\ell^p(X(\omega))$ with norm at most $\|A\|$.
\end{cor}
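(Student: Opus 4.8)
The plan is to deduce Corollary \ref{p finite} by feeding the uniform finite-dimensional estimates of Proposition \ref{local isom op} into the matrix-to-operator criterion of Lemma \ref{matrix to op for finite p}. The key point is that for $p \in \dom$ the norm of an operator on $\ell^p_E(X(\omega))$ (and likewise on each $\ell^p_E(F)$) is governed by its finite block cutdowns via \eqref{norm p fin}, so it suffices to bound $\|P_F\Phi_\omega(A)P_F\|$ uniformly over finite $F \subseteq X(\omega)$.

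First I would fix a finite subset $F \subseteq X(\omega)$ and an arbitrary $\epsilon > 0$, and apply Proposition \ref{local isom op} with this $F$ and $\epsilon$. This yields a local coordinate system $\{f_y : F \to G(y)\}_{y \in Y}$ and, for each $y \in Y$, an isometry $U_y : \ell^p(F) \to \ell^p(G(y))$ with
$$\|U_y^{-1} P_{G(y)} A P_{G(y)} U_y - P_F \Phi_\omega(A) P_F\| < \epsilon.$$
Since $U_y$ is an isometric isomorphism, $\|U_y^{-1} P_{G(y)} A P_{G(y)} U_y\| = \|P_{G(y)} A P_{G(y)}\| \leq \|A\|$, where the last inequality uses \eqref{norm p fin} (or just that $P_{G(y)}$ is a contraction). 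Hence $\|P_F \Phi_\omega(A) P_F\| < \|A\| + \epsilon$, and letting $\epsilon \to 0$ gives $\|P_F \Phi_\omega(A) P_F\| \leq \|A\|$ for every finite $F$. Now Lemma \ref{matrix to op for finite p}, applied to the $X(\omega)$-by-$X(\omega)$ matrix $\Phi_\omega(A)$ with the uniform bound $M = \|A\|$, shows that matrix multiplication by $\Phi_\omega(A)$ is a well-defined bounded operator on $\ell^p_E(X(\omega))$, with norm $\sup_{F} \|P_F \Phi_\omega(A) P_F\| \leq \|A\|$.

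Honestly, there is no serious obstacle here: the entire content has been offloaded into the two cited results, and what remains is bookkeeping. The only subtlety worth a sentence is making sure Lemma \ref{matrix to op for finite p} is genuinely applicable, i.e. that the finite submatrix of $\Phi_\omega(A)$ indexed by $F$ is literally the operator $P_F \Phi_\omega(A) P_F$ acting on $\ell^p_E(F)$ — which is exactly the identification made in the statement of Proposition \ref{local isom op} — and that the bound obtained is independent of $F$, which it is since $\|A\|$ does not depend on $F$. One should also note in passing that $\Phi_\omega(A)$ has finite propagation (bounded by $\ppg$ of a band operator approximating $A$ up to $\epsilon$, or more carefully by the propagation bound inherited through the local coordinates), so the matrix genuinely has uniformly bounded entries as required by the hypothesis of Lemma \ref{matrix to op for finite p}; this follows from the construction of $\Phi_\omega(A)$ as an entrywise limit of cutdowns of $A$ together with the propagation estimate in Proposition \ref{local isom}.
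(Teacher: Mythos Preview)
Your proposal is correct and follows exactly the approach the paper indicates: bound the finite blocks $P_F\Phi_\omega(A)P_F$ uniformly by $\|A\|$ via Proposition \ref{local isom op}, then invoke Lemma \ref{matrix to op for finite p}. Your final paragraph is unnecessary and slightly off---Lemma \ref{matrix to op for finite p} does not require uniformly bounded entries or finite propagation as separate hypotheses, and $\Phi_\omega(A)$ need not have finite propagation when $A$ is merely band-dominated---but this aside does not affect the validity of the main argument.
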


Therefore in the case of $p \in \dom$, we may regard limit operators as concrete bounded linear operators on $\ell^p$-type spaces. See \cite[Section 4]{vspakula2017metric} for illuminating examples, and \cite[Appendix B]{vspakula2017metric} for the comparisons with the classical limit operators in the case of groups.

\subsection{Implementing limit operators for $p=\infty$}\label{imp for infty}

As we point out in Section \ref{matrix coeff}, operators on $\linf$ may not be uniquely determined by their matrix coefficients in general. Hence, to realise limit operators via matrices in this case, we need an extra density result as follows. It is stated for $p \in \{0,1,\infty\}$ due to further uses.

\begin{thm}\label{density thm}
Let $X$ be a space, $\omega$ be a non-principal ultrafilter on $X$ and $p \in \{0,1,\infty\}$. The set of band operators on $\lp$ rich at $\omega$ are dense in $\Ap^{\$,\omega}$; and the set of rich band operators on $\lp$ are dense in $\Ap^{\$}$.
\end{thm}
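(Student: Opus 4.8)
The plan is to approximate a rich band-dominated operator $A \in \Ap^{\$}$ in operator norm by band operators, while retaining the richness property during the truncation. The natural first move is to use the definition of $\Ap$: pick band operators $B_n$ with $\|A - B_n\| \to 0$. The difficulty is that these $B_n$ need not be rich at any $\omega$, so the bulk of the work is to produce, from $A$ and from a given error $\epsilon$, a \emph{rich} band operator within $\epsilon$ of $A$. I would first reduce to a single ultrafilter: if for each $\omega \in \partial X$ and each $\epsilon$ one can find a rich-at-$\omega$ band operator $\epsilon$-close to $A$, a compactness/partition argument on $\beta X$ — or more directly, the observation below that richness of a band operator is automatic once its diagonal ``blocks'' stabilise — upgrades this to simultaneous richness at all boundary points.

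The key structural observation I would exploit is that a band operator of propagation at most $r$ is, by Lemma \ref{dec of BO}, a finite sum $\sum_{k=1}^N f_k V_k$ of multiplication operators and partial translation operators of propagation at most $r$, with $N$ depending only on $r$ and the geometry of $X$; and such a band operator is rich (at every $\omega$) precisely when each coefficient function, viewed appropriately along partial translations, converges along every ultrafilter — which for a \emph{fixed finite propagation} reduces to a statement about finitely many bounded $\B(E)$-valued functions on $X$. So the strategy is: given $A \in \Ap^{\$}$ and $\epsilon > 0$, choose a band operator $B$ with $\|A-B\| < \epsilon/2$, say of propagation $\leq r$; then modify $B$ on a set that is ``small'' in a sense invisible to the operator norm estimate but which forces the coefficient functions to become ultrafilter-convergent. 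The honest mechanism for this is to run the argument locally on balls: by bounded geometry and strong discreteness there are only finitely many isometry types of $r$-balls, hence finitely many possible ``local patterns'' of the matrix of $B$ near a point up to a prescribed $\B(E)$-quantisation; one partitions $X$ according to these finitely many patterns, and on each piece one can replace $B$ by a genuinely translation-like (hence rich) model, at the cost of only $O(\epsilon)$ in norm because $B$ was already within $\epsilon/2$ of the rich operator $A$ whose limit operators control the patterns.

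Concretely, the steps I would carry out are: (1) state and prove the single-$\omega$ case — given $A \in \Ap^{\$,\omega}$ and $\epsilon$, take a band operator $B$ with $\|A-B\|<\epsilon/2$, use $\ppg(B) \le r$, and use Lemma \ref{local isom op} (its $p \in \{0,1,\infty\}$ analogue, which holds by the same proof once we only compare finite submatrices, where all $\ell^p$ norms are legitimate) together with richness of $A$ to see that along $\omega$ the finite blocks $P_{G(y)} B P_{G(y)}$ are eventually within $\epsilon/2$ of a fixed matrix; (2) perform a ``diagonal freezing'': enumerate the finitely many local-coordinate balls, and on the $\omega$-large set where the blocks have stabilised up to $\epsilon/2$, redefine the entries of $B$ to equal the frozen model block, obtaining $B'$ with $\ppg(B') \le r$, $\|A-B'\| \le \|A-B\| + \sup_{x,y}\|B_{xy}-B'_{xy}\|\cdot\sup_x \sharp B(x,r) < \epsilon$ after rescaling the tolerance, and $B'$ rich at $\omega$ by construction since its coefficients are locally constant along the relevant translations; (3) for the global statement, iterate over a finite partition of $\beta X$ into clopen pieces on each of which a single frozen model works — here bounded geometry caps the number of distinct patterns at a fixed quantisation scale, so finitely many pieces suffice — and glue; finally (4) verify the glued operator still has propagation $\le r$ and lies within $\epsilon$ of $A$, and check it is rich at \emph{every} $\omega$, which again follows because a finite-propagation operator whose coefficient functions are eventually locally constant (in the sense above) is automatically rich.

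I expect the main obstacle to be step (2)–(3): making the ``freezing'' precise so that the modification genuinely produces a band operator (propagation must not increase) that is rich \emph{at every} boundary point simultaneously, not just at the one $\omega$ we started with, while keeping the norm perturbation under control uniformly. The subtlety is that naive independent freezing near different boundary directions can conflict on overlaps; the resolution is to quantise $\B(E)$ at scale $\epsilon$ first (using separability is \emph{not} available, so one must instead quantise the finitely many \emph{blocks} that actually occur, which is legitimate because for fixed propagation and fixed geometry only finitely many block-shapes arise and each block is a single bounded operator one may approximate once), partition $X$ into the finitely many resulting classes, and define the model block on each class — this produces a single globally-defined band operator with locally constant block structure, hence rich everywhere, and within $\epsilon$ of $A$. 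Notably this argument uses only bounded geometry and strong discreteness of $X$, not Property A, which is why the hypothesis can be dropped in the range $p \in \{0,1,\infty\}$, in contrast to \cite[Theorem 6.6]{vspakula2017metric}.
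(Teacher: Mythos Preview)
Your plan differs substantially from the paper's, and step~(3) has a genuine gap.

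The paper never starts from an arbitrary band approximant and repairs it. Instead it defines contractions $M_n:\Ap\to\Ap$ by $M_n(A)=\sum_{i}\phi_iA\psi_i^{(n)}$ (for $p\in\{0,\infty\}$) or $\sum_i\psi_i^{(n)}A\phi_i$ (for $p=1$), where $\{\phi_i\}$ is a fixed metric $1$-partition of unity and $\{\psi_i^{(n)}\}$ a $1/n$-Lipschitz dual family. The estimates of Lemmas~\ref{cstrct op}--\ref{cstrct op left}, which for $p\in\{0,1,\infty\}$ require no Property~A, give $\|M_n\|\le1$, each $M_n(A)$ a band operator, and $M_n(A)\to A$ in norm (Lemma~\ref{app}). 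Richness is then automatic: the matrix entry $(M_n(A))_{xy}$ equals $A_{xy}$ multiplied by the \emph{scalar} $\sum_i\phi_i(x)\psi_i^{(n)}(y)$, and since bounded real-valued functions on $X$ always have ultrafilter limits, richness of $A$ at $\omega$ immediately gives richness of $M_n(A)$ at $\omega$. No quantisation of $\B(E)$, no freezing, no gluing is needed; this is why the paper can simply quote the remainder of the argument from \cite[Theorem~6.6]{vspakula2017metric}.

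The sentence ``bounded geometry caps the number of distinct patterns at a fixed quantisation scale, so finitely many pieces suffice'' is where your argument fails. Bounded geometry bounds the number of isometry types of $r$-balls --- the number of block \emph{shapes} --- but says nothing about how many $\epsilon$-separated \emph{values} in $\B(E)$ the entries of $B$ assume; your clause ``each block is a single bounded operator one may approximate once'' conflates a shape with the infinitely many blocks realising that shape. What would actually supply an $\epsilon$-net is the richness of $A$ itself: if $A$ is rich at every $\omega$, then for each pair of partial translations $t,s$ the map $z\mapsto A_{t(z)s(z)}$ extends continuously to a compact subset of $\beta X$, hence has totally bounded range in norm, and the same then holds for $B$ since $\|A-B\|<\epsilon/2$. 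With this in hand one can quantise the coefficient functions $f_k$ in the decomposition $B=\sum_k f_kV_k$ of Lemma~\ref{dec of BO} to finitely-valued $f_k'$ and set $B'=\sum_k f_k'V_k$, which is then rich everywhere. So your outline is salvageable, but the mechanism you invoke is the wrong one, and once the correct one is in place the route is no more elementary than the paper's scalar-multiplication trick.
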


The proof is postponed to Section \ref{density section} after we develop the tool to construct operators via blocks. We point out that the density result also holds for $p \in (1,\infty)$ under the extra assumption of Property A on $X$ \cite[Theorem 6.6]{vspakula2017metric}, while surprisingly as we will see, it holds generally for $p\in\{0,1,\infty\}$.

Now we use Theorem \ref{density thm} to show that $\Phi_\omega(A)$ can be implemented as a bounded operator when $p=\infty$. We start with the following lemma.

\begin{lem}\label{limit on BD}
Let $\omega$ be a non-principal ultrafilter on $X$, and $A \in \BD$ be a band operator on $\linf$ rich at $\omega$. Then $\Phi_\omega(A)$ is a band operator whose propagation does not exceed that of $A$, and $\|\Phi_\omega(A)\| \leq \|A\|$.
\end{lem}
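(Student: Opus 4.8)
The plan is to reduce the statement to the local geometric comparison in Proposition \ref{local isom op}, which is already available for $p=\infty$ once we know matrix multiplication makes sense. First I would check the propagation claim directly from the definition of $\Phi_\omega(A)$: if $\alpha, \beta \in X(\omega)$ satisfy $d_\omega(\alpha,\beta) > r := \ppg(A)$, pick a compatible family $\{t_\gamma\}$ and recall that $d_\omega(\alpha,\beta) = \lim_{x\to\omega} d(t_\alpha(x), t_\beta(x))$. Since the distance set of $X$ is discrete and $d_\omega(\alpha,\beta)$ lies in it, the set $\{x : d(t_\alpha(x), t_\beta(x)) > r\}$ is in $\omega$; on that set $A_{t_\alpha(x)t_\beta(x)} = 0$, so the $\omega$-limit defining $\Phi_\omega(A)_{\alpha\beta}$ is $0$. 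Hence $\ppg(\Phi_\omega(A)) \le r$. The uniform boundedness of the entries $\|\Phi_\omega(A)_{\alpha\beta}\| = \lim_{x\to\omega}\|A_{t_\alpha(x)t_\beta(x)}\| \le \sup_{x,y}\|A_{xy}\|$ is immediate, so $\Phi_\omega(A)$ is a band operator (as a formal matrix); by the Corollary after Lemma \ref{dec of BO} it defines a bounded operator on $\ell^\infty(X(\omega))$, in particular one that is determined by its matrix coefficients.

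For the norm bound, I would invoke \eqref{norm p fin} in the form of Lemma \ref{op to matrix coeff} (valid since a band operator lies in $\Sinf$): it suffices to bound $\|P_F \Phi_\omega(A) P_F\|$ on $\ell^\infty_E(F)$ uniformly over finite $F \subseteq X(\omega)$ by $\|A\|$. Fix such an $F$ and $\epsilon > 0$. Apply Proposition \ref{local isom op} with this $F$ and $\epsilon$: it yields a local coordinate system $\{f_y : F \to G(y)\}_{y\in Y}$ with $\omega(Y)=1$ and isometric identifications $U_y : \ell^\infty(F) \to \ell^\infty(G(y))$ (these are isometries for the $\ell^\infty$-norm just as for any $\ell^p$) such that
$$\|U_y^{-1} P_{G(y)} A P_{G(y)} U_y - P_F \Phi_\omega(A) P_F\| < \epsilon$$
for every $y \in Y$. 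Fixing any one such $y$, and using that $P_{G(y)} A P_{G(y)}$ is a compression of $A$ so has norm at most $\|A\|$ (and $U_y$ is isometric), we get $\|P_F \Phi_\omega(A) P_F\| < \|A\| + \epsilon$. Letting $\epsilon \to 0$ gives $\|P_F \Phi_\omega(A) P_F\| \le \|A\|$, and taking the supremum over $F$ gives $\|\Phi_\omega(A)\| \le \|A\|$.

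The only genuinely delicate point is that Proposition \ref{local isom op} as stated is phrased for $p \in \Dom$, but its statement already covers $p = \infty$, and its proof (inherited from \cite[Proposition 4.6]{vspakula2017metric}) goes through on $\ell^\infty_E(F)$ for finite $F$ with no change, since for finite index sets all $\ell^p$-norms are comparable and the argument is purely entrywise and finite-dimensional in the relevant sense. One should just double-check that $U_y$ is indeed isometric for the sup-norm (it is, being induced by a bijection $f_y$) and that the compression estimate $\|P_{G(y)} A P_{G(y)}\| \le \|A\|$ holds on $\ell^\infty$ (it does, since $P_{G(y)}$ has norm one on $\ell^\infty$ for finite $G(y)$). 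No use of density (Theorem \ref{density thm}) is actually needed for this particular lemma; density enters only afterwards, when passing from band operators to band-dominated operators.
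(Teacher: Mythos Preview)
Your proposal is correct and follows essentially the same route as the paper's own proof: the propagation bound is read off directly from the definition of $\Phi_\omega(A)$ (the paper just cites \cite[Theorem 4.10(2)]{vspakula2017metric} for this), and the norm bound is obtained by combining Lemma~\ref{op to matrix coeff} with Proposition~\ref{local isom op} exactly as you do. Your version simply unpacks the details the paper leaves implicit, and your closing remarks about $U_y$ being a sup-norm isometry and about the density theorem not being needed here are accurate side observations.
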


\begin{proof}
The claim on propagation follows directly from that of \cite[Theorem 4.10(2)]{vspakula2017metric}, hence omitted. Concerning norms, note that $\Phi_\omega(A) \in \mathbb{C}[X(\omega);E] \subseteq \mathcal{S}^\infty_E(X(\omega))$. Hence by Lemma \ref{op to matrix coeff}, $\|\Phi_\omega(A)\|=\sup_{F \in \F} \|P_F\Phi_\omega(A)P_F\|$. On the other hand, by lemma \ref{local isom op}, $\|P_F\Phi_\omega(A)P_F\| \leq \|A\|$ for any finite $F\subseteq X$. So the lemma holds.
\end{proof}

\begin{prop}\label{impl of limit}
Let $X$ be a space and $\omega$ a non-principal ultrafilter on $X$. Suppose $A \in \Ainf$ is rich at $\omega$, and $\Phi_\omega(A)$ is the associated $X(\omega)$-by-$X(\omega)$ matrix. Then there exists a unique operator in $\mathcal{S}^\infty_E(X(\omega))$ with matrix coefficients $\{\Phi_\omega(A)_{\alpha\beta}\}_{\alpha,\beta \in X(\omega)}$ and norm at most $\|A\|$. Furthermore, this operator belongs to $\mathcal{A}^\infty_E(X(\omega))$.
\end{prop}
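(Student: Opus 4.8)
The plan is to realise the sought operator as a norm-limit of the already-implemented limit operators of \emph{band} operators, with Theorem \ref{density thm} as the essential input, and then to check a posteriori that it carries the prescribed matrix coefficients. Uniqueness is immediate and does not use the hypotheses: if $T,T' \in \mathcal{S}^\infty_E(X(\omega))$ have the same matrix coefficients, then $S:=T-T'$ satisfies $S_{\alpha\beta}=0$ for all $\alpha,\beta \in X(\omega)$, and since $P_FSP_F$ is precisely multiplication by the finite matrix $[S_{\alpha\beta}]_{\alpha,\beta \in F}$, Lemma \ref{op to matrix coeff} gives $\|S\|=\sup_{F\in \F}\|P_FSP_F\|=0$. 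In particular the operator constructed below will not depend on the auxiliary choices.

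For existence, since $A$ is rich at $\omega$, Theorem \ref{density thm} supplies band operators $A_n \in \BD$, each rich at $\omega$, with $\|A_n-A\| \to 0$. By Lemma \ref{limit on BD} each $\Phi_\omega(A_n)$ is a band operator on $\ell^\infty_E(X(\omega))$ with $\|\Phi_\omega(A_n)\| \leq \|A_n\|$. Since $\Phi_\omega$ is linear on band operators rich at $\omega$ (clear from the definition via ultralimits of matrix coefficients) and the difference of two such operators is again one,
$$\|\Phi_\omega(A_n)-\Phi_\omega(A_m)\| = \|\Phi_\omega(A_n-A_m)\| \leq \|A_n-A_m\| \longrightarrow 0,$$
so $(\Phi_\omega(A_n))_n$ is Cauchy in $\B(\ell^\infty_E(X(\omega)))$ and converges in norm to some operator $B$. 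Being a norm-limit of band operators, $B \in \mathcal{A}^\infty_E(X(\omega)) \subseteq \mathcal{S}^\infty_E(X(\omega))$, and $\|B\| = \lim_n \|\Phi_\omega(A_n)\| \leq \lim_n \|A_n\| = \|A\|$.

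It remains to identify the matrix coefficients of $B$. The map $T \mapsto T_{\alpha\beta}=R_\alpha TE_\beta$ is norm-contractive, so $B_{\alpha\beta}=\lim_n \Phi_\omega(A_n)_{\alpha\beta}$. Fixing a compatible family $\{t_\alpha\}$, contractivity of matrix coefficients also gives $\|(A_n)_{uv}-A_{uv}\| \leq \|A_n-A\|$ uniformly in $u,v \in X$, and taking the ultralimit along $x\to\omega$ yields
$$\big\|\Phi_\omega(A_n)_{\alpha\beta}-\Phi_\omega(A)_{\alpha\beta}\big\| \leq \|A_n-A\| \longrightarrow 0,$$
where $\Phi_\omega(A)_{\alpha\beta}=\lim_{x\to\omega}A_{t_\alpha(x)t_\beta(x)}$ denotes the abstract matrix entry. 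Hence $B_{\alpha\beta}=\Phi_\omega(A)_{\alpha\beta}$ for every $\alpha,\beta \in X(\omega)$, so $B$ is the desired operator.

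The substantive content is outsourced to Theorem \ref{density thm} (approximating a rich band-dominated operator by rich band operators), whose proof uses the block-cutdown machinery of Section \ref{construct para section}. Granting it, the only delicate point above is the interchange of the norm limit in $n$ with the ultralimit in $x$, which is legitimate precisely because $A_n \to A$ converges uniformly at the level of matrix coefficients; I expect no further obstacles.
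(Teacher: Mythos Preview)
Your proof is correct and follows essentially the same approach as the paper's: approximate $A$ by rich band operators via Theorem \ref{density thm}, use Lemma \ref{limit on BD} to see that the corresponding limit operators form a Cauchy sequence in $\mathcal{A}^\infty_E(X(\omega))$, and then identify the matrix coefficients of the limit. Your verification of the matrix coefficients via the uniform bound $\|\Phi_\omega(A_n)_{\alpha\beta}-\Phi_\omega(A)_{\alpha\beta}\|\leq\|A_n-A\|$ is a slightly cleaner packaging of the paper's explicit $\epsilon/3$ argument, but the content is identical.
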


\begin{proof}
By Theorem \ref{density thm}, there exist band operators $\{A_n\}_{n \in \N}$ on $\linf$ rich at $\omega$ and converging to $A$. By Lemma \ref{limit on BD}, each $\Phi_\omega(A_n)$ is a band operator and $\{\Phi_\omega(A_n)\}_{n \in \N}$ is Cauchy, hence it converges to some $T$ in $\mathcal{A}^\infty_E(X(\omega))$. For any $\alpha,\beta \in X(\omega)$ and $\epsilon>0$, there exists some $N$ such that $\|\Phi_\omega(A_N)_{\alpha \beta}-T_{\alpha\beta}\| < \epsilon/3$ and $\|A_N-A\|<\epsilon/3$. By definition, if we set
$$Y=\{x\in X: \|(A_N)_{t_\alpha(x)t_\beta(x)}-\Phi_\omega(A_N)_{\alpha \beta}\|< \epsilon/3\},$$
then $\omega(Y)=1$. Hence for $x\in Y$, we have
\begin{eqnarray*}
\|A_{t_\alpha(x)t_\beta(x)}-T_{\alpha\beta}\| &\leq& \|A_{t_\alpha(x)t_\beta(x)}-(A_N)_{t_\alpha(x)t_\beta(x)}\| + \|(A_N)_{t_\alpha(x)t_\beta(x)}-\Phi_\omega(A_N)_{\alpha \beta}\| \\
&&~+~ \|\Phi_\omega(A_N)_{\alpha \beta}-T_{\alpha\beta}\| \\
&<& \epsilon/3 + \epsilon/3 + \epsilon/3 =\epsilon.
\end{eqnarray*}
Therefore,
$$\omega\big(\{x\in X: \|A_{t_\alpha(x)t_\beta(x)}-T_{\alpha\beta}\|< \epsilon\}\big)=1$$
for any $\epsilon$, which implies that
$$\Phi_\omega(A)_{\alpha\beta}=\lim_{x\to \omega}A_{t_\alpha(x)t_\beta(x)}=T_{\alpha\beta}.$$
So $T$ is the required operator and it is unique by Lemma \ref{op to matrix coeff}.
\end{proof}

\begin{cor}\label{bd for 01infty}
Let $X$ be a space, $p\in \{0,1,\infty\}$ and $A \in \Ap$. Then for any $\omega \in \partial X$, the limit operator $\Phi_\omega(A)$ belongs to $\mathcal{A}^p_E(X(\omega))$.
\end{cor}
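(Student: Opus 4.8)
The plan is to reduce everything to the density result, just as in the proof of Proposition \ref{impl of limit}. First note that $\Phi_\omega(A)$ only makes sense when $A$ is rich at $\omega$, so we work with $A \in \Ap$ rich at $\omega$. The case $p=\infty$ is then literally Proposition \ref{impl of limit}, which already exhibits an operator in $\mathcal{A}^\infty_E(X(\omega))$ whose matrix coefficients are $\{\Phi_\omega(A)_{\alpha\beta}\}$. So it remains to treat $p \in \{0,1\}$, where the argument is a simplified rerun of that proof: since for $p \in \dom$ an operator on $\ell^p_E(X(\omega))$ is uniquely determined by its matrix coefficients (Section \ref{matrix coeff}), it is enough to produce \emph{some} band-dominated operator on $\ell^p_E(X(\omega))$ with the prescribed coefficients, and no uniqueness gymnastics are needed.

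To do so, I would invoke Theorem \ref{density thm} to choose band operators $A_n$ on $\lp$, rich at $\omega$, with $A_n \to A$ in norm. By the propagation estimate of \cite[Theorem 4.10(2)]{vspakula2017metric} (as recalled in Lemma \ref{limit on BD}), each $\Phi_\omega(A_n)$ is an $X(\omega)$-by-$X(\omega)$ matrix of propagation at most $\ppg(A_n)$, so it lies in $\mathbb{C}[X(\omega);E]$; by Corollary \ref{p finite} it defines a band operator on $\ell^p(X(\omega))$ with $\|\Phi_\omega(A_n)\| \le \|A_n\|$. Since $\Phi_\omega$ is linear on operators rich at $\omega$ (ultrafilter limits being linear) and each $A_n - A_m$ is again a band operator rich at $\omega$, Corollary \ref{p finite} gives
$$\|\Phi_\omega(A_n) - \Phi_\omega(A_m)\| = \|\Phi_\omega(A_n - A_m)\| \le \|A_n - A_m\|.$$
Hence $\{\Phi_\omega(A_n)\}_n$ is Cauchy in norm and converges to some $T \in \mathcal{A}^p_E(X(\omega))$.

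Finally, I would identify $T$ with the matrix $\Phi_\omega(A)$ by repeating, essentially verbatim, the three-$\epsilon$ estimate on matrix coefficients from the proof of Proposition \ref{impl of limit}: for fixed $\alpha,\beta \in X(\omega)$ and $\epsilon>0$, pick $N$ with $\|\Phi_\omega(A_N)_{\alpha\beta} - T_{\alpha\beta}\| < \epsilon/3$ and $\|A_N - A\| < \epsilon/3$, and use $\omega\big(\{x : \|(A_N)_{t_\alpha(x)t_\beta(x)} - \Phi_\omega(A_N)_{\alpha\beta}\| < \epsilon/3\}\big) = 1$ to conclude that $\|A_{t_\alpha(x)t_\beta(x)} - T_{\alpha\beta}\| < \epsilon$ on a set of $\omega$-measure one; letting $\epsilon \to 0$ yields $\Phi_\omega(A)_{\alpha\beta} = \lim_{x\to\omega} A_{t_\alpha(x)t_\beta(x)} = T_{\alpha\beta}$. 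As operators on $\ell^p_E(X(\omega))$ with $p \in \dom$ are determined by their matrix coefficients, $T = \Phi_\omega(A)$, and therefore $\Phi_\omega(A) \in \mathcal{A}^p_E(X(\omega))$.

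I do not expect a genuine obstacle here: the entire weight of the statement rests on Theorem \ref{density thm}, which supplies band-operator approximants that are \emph{still rich at $\omega$} --- exactly the ingredient unavailable for $p \in (1,\infty)$ without Property A, and exactly why this corollary can be split off cleanly for $p \in \{0,1,\infty\}$. Granting that, the only points still requiring a little care are the contractivity $\|\Phi_\omega(\cdot)\| \le \|\cdot\|$, needed to transfer the Cauchy property from $\{A_n\}$ to $\{\Phi_\omega(A_n)\}$, and the fact that for $p=\infty$ one cannot appeal to uniqueness via matrix coefficients and must route through Proposition \ref{impl of limit} instead.
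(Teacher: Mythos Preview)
Your proposal is correct and is exactly the argument the paper intends: the corollary is stated without proof immediately after Proposition~\ref{impl of limit}, and your adaptation of that proof to $p\in\{0,1\}$ via Theorem~\ref{density thm}, Corollary~\ref{p finite}, and the same three-$\epsilon$ identification of matrix coefficients is precisely the implicit reasoning. Your observation that richness at $\omega$ is an implicit hypothesis is also on point.
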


Now we collect several important properties of limit operators already obtained as follows. The proof is very similar to \cite[Theorem 4.10]{vspakula2017metric}, combined with Proposition \ref{impl of limit} and Corollary \ref{bd for 01infty}, hence omitted.

\begin{prop}\label{limit op homo}
Let $\omega$ be a non-principal ultrafilter on $X$ and $p\in\Dom$. Then the map
$$\Phi_\omega: \Ap^{\$,\omega} \to \B(\ell^p_E(X(\omega)))$$
has the following properties:
\begin{enumerate}
  \item $\Phi_\omega$ is contractive.
  \item $\Phi_\omega$ takes band operators to band operators and does not increase propagations.
  \item $\Phi_\omega$ is a homomorphism.
  \item $\Phi_\omega$ has image in $\mathcal{A}^p_E(X(\omega))$ for $p\in \{0,1,\infty\}$.
\end{enumerate}
\end{prop}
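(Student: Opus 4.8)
The plan is to follow the strategy of \cite[Theorem 4.10]{vspakula2017metric}: verify each of the four properties first for band operators, and then propagate it to all of $\Ap^{\$,\omega}$ by combining the density of rich band operators (Theorem \ref{density thm}) with the contractivity of $\Phi_\omega$. In the case $p=\infty$ the two new ingredients are Proposition \ref{impl of limit}, which realises $\Phi_\omega(A)$ as a genuine bounded operator on $\ell^\infty_E(X(\omega))$, and Corollary \ref{bd for 01infty}.

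Properties (1), (2) and (4) are essentially already in hand. For (2), if $A\in\BD$ is rich at $\omega$ then the entries $\Phi_\omega(A)_{\alpha\beta}=\lim_{x\to\omega}A_{t_\alpha(x)t_\beta(x)}$ are bounded by $\sup_{x,y}\|A_{xy}\|$, and whenever $d_\omega(\alpha,\beta)>\ppg(A)$ one has $d(t_\alpha(x),t_\beta(x))>\ppg(A)$ for $x$ in an $\omega$-large set, hence $A_{t_\alpha(x)t_\beta(x)}=0$ there and $\Phi_\omega(A)_{\alpha\beta}=0$; this is \cite[Theorem 4.10(2)]{vspakula2017metric}, already recorded in Lemma \ref{limit on BD}. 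Property (1) is Corollary \ref{p finite} for $p\in\dom$ and Proposition \ref{impl of limit} for $p=\infty$. Property (4) is precisely Corollary \ref{bd for 01infty}.

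The substance of the statement is the homomorphism property (3). Linearity and $\Phi_\omega(\Id)=\Id$ are immediate from the defining formula for the matrix coefficients, so only multiplicativity needs work. I would first treat band operators $A,B\in\BD$ that are rich at $\omega$, say of propagation at most $r$. Fix $\alpha,\beta\in X(\omega)$ and a compatible family $\{t_\gamma\}_{\gamma\in X(\omega)}$. For every $x\in X$ the sum $(AB)_{t_\alpha(x)t_\beta(x)}=\sum_{w\in X}A_{t_\alpha(x)w}B_{wt_\beta(x)}$ is finite, with nonzero terms only for $w\in B(t_\alpha(x),r)$. Choosing a large enough metric ball $F=B_{X(\omega)}(\omega,R)$ containing $\alpha$, $\beta$ and the ball $B_{X(\omega)}(\alpha,r)$, and applying the ``furthermore'' part of Proposition \ref{local isom}, one obtains $Y$ with $\omega(Y)=1$ and surjective isometries $f_y\colon F\to B_X(y,R)$, $\gamma\mapsto t_\gamma(y)$; then for $y\in Y$ every $w$ contributing above equals $t_\gamma(y)$ for a unique $\gamma\in F$, and, since $f_y$ is an isometry, the relevant index set $\{\gamma\in F:\ A_{t_\alpha(y)t_\gamma(y)}\neq 0\}\subseteq B_{X(\omega)}(\alpha,r)$ is finite and independent of $y\in Y$. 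Taking the limit along $\omega$ termwise in the resulting finite sum yields
\[
\lim_{x\to\omega}(AB)_{t_\alpha(x)t_\beta(x)}=\sum_{\gamma\in X(\omega)}\Phi_\omega(A)_{\alpha\gamma}\,\Phi_\omega(B)_{\gamma\beta}=\big(\Phi_\omega(A)\Phi_\omega(B)\big)_{\alpha\beta},
\]
the middle sum being finite because $\Phi_\omega(A)$ is a band operator by (2). In particular the left-hand limit exists for every compatible pair (reduce an arbitrary pair $t,s$ to one from the family via Lemma \ref{unique}), so $AB$ is rich at $\omega$ and $\Phi_\omega(AB)_{\alpha\beta}$ equals the above; when $p=\infty$ one invokes Lemma \ref{op to matrix coeff} to see that this identity of matrix coefficients genuinely computes the operator product. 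Finally, since $\Ap^{\$,\omega}$ is a closed subalgebra of $\Ap$ and $\Phi_\omega$ is contractive by (1), for general $A,B\in\Ap^{\$,\omega}$ one picks rich band operators $A_n\to A$ and $B_n\to B$ via Theorem \ref{density thm} and passes to the limit in $\Phi_\omega(A_nB_n)=\Phi_\omega(A_n)\Phi_\omega(B_n)$ to conclude $\Phi_\omega(AB)=\Phi_\omega(A)\Phi_\omega(B)$.

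The delicate step will be the bookkeeping in (3): identifying, uniformly over an $\omega$-large set of $y$, the intermediate summation variable $w\in X$ with $t_\gamma(y)$ for $\gamma$ ranging over a fixed finite subset of $X(\omega)$, so that the ultrafilter limit may be taken termwise; and, when $p=\infty$, making sure the manipulations performed at the level of matrix coefficients legitimately represent operator products (via Lemma \ref{op to matrix coeff}). Everything else reduces either to the cited results or to a routine density-and-continuity argument, which is why the full proof is omitted.
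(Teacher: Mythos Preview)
Your proposal is correct and matches the paper's approach: the paper's own proof is simply ``very similar to \cite[Theorem 4.10]{vspakula2017metric}, combined with Proposition \ref{impl of limit} and Corollary \ref{bd for 01infty}, hence omitted,'' and you have filled in exactly this outline. One small remark: your final density-and-continuity step for multiplicativity invokes Theorem \ref{density thm}, which is stated only for $p\in\{0,1,\infty\}$; for $p\in(1,\infty)$ the paper defers entirely to \cite[Theorem 4.10]{vspakula2017metric}, so your argument covers precisely the cases the paper is responsible for.
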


\begin{defn}
For $p\in \Dom$, let $A \in \Ap^{\$}$ be a rich band-dominated operator. The collection
$$\sigma_{op}(A):=\{\Phi_\omega(A) \in \B(\ell^p_E(X(\omega))): \omega \in \partial X\}$$
is called the \emph{operator spectrum} of $A$.
\end{defn}

Later in Theorem \ref{main thm}, we will detect the properties of $\P$-Fredholmness and invertibility at infinity for a rich band-dominated operator in terms of its operator spectrum for $p \in \Dom$.

\section{Partition of unity and the density theorem}\label{partition of unity}

In this section, as promised, we develop the technique to approximate operators via block cutdowns, especially in the case of $p \in \{0,1,\infty\}$. They are not only crucial to prove the density result (Theorem \ref{density thm}) above, but also useful to construct parametrices in later proof of our main theorem.

Throughout this section, $X$ is a space and $E$ is a Banach space. For $p \in [1,\infty)$, set $q \in (1,\infty]$ to be the conjugate exponent of $p$, i.e., $1/p+1/q=1$.

\subsection{Property A, partition of unity, and constructing operators}\label{construct para section}

Property A was first introduced by Yu \cite{yu2000coarse}, and since then it has been shown to be equivalent to a lot of other properties. Here we shall use the formulation in terms of the existence of partitions of unity with small variation \cite[Theorem 1.2.4]{willett2006some}, which helps us to cut an operator into blocks.

\begin{defn}[\cite{vspakula2017metric}]
For $p \in [1,\infty)$, a \emph{metric $p$-partition of unity} on $X$ is a collection $\{\phi_i: X \to [0,1]\}_{i\in I}$ of functions on $X$ such that:
\begin{enumerate}
  \item There exists $N \in \mathbb N$ such that for each $x \in X$, at most $N$ of $\phi_i(x)$ are non-zero.
  \item $\{\phi_i\}_{i\in I}$ have uniformly bounded supports, i.e.,
  $$\sup_{i\in I} (\mathrm{diam} (\{x \in X:\phi_i(x) \neq 0\})) < \infty.$$
  \item For each $x \in X$, $\sum_{i \in I} \phi_i(x)^p=1$.
\end{enumerate}
Let $r,\epsilon>0$. A metric $p$-partition of unity $\{\phi_i\}_{i\in I}$ is said to have $(r,\epsilon)$-\emph{variation} if for any $x,y \in X$ with $d(x,y) \leq r$, we have
$$\sum_{i \in I} |\phi_i(x)-\phi_i(y)|^p < \epsilon^p.$$
A space $X$ is said to have \emph{Property A} if for any $r,\epsilon>0$, there exists a metric $p$-partition of unity with $(r,\epsilon)$-variation.
\end{defn}

A lot of interesting spaces and groups are known to have Property A. For example, amenable groups \cite{yu2000coarse}, hyperbolic spaces \cite{roe2005hyperbolic}, CAT(0) cube complexes with finite dimension \cite{brodzki2009property} and all linear groups over any field \cite{guentner2005the}.

To deal with the extreme cases that $p\in \{0,1,\infty\}$, we introduce the following notion of dual family.

\begin{defn}
Let $\{\phi_i\}_{i \in I}$ be a metric $1$-partition of unity on $X$. A \emph{dual family} of $\{\phi_i\}_{i \in I}$ is defined to be a collection $\{\psi_i:X \to [0,1]\}_{i \in I}$ satisfying $\psi_i|_{\supp(\phi_i)}\equiv 1$ and there exists some $R>0$ such that $\supp(\psi_i) \subseteq \Nd_R(\supp(\phi_i))$ for any $i \in I$. A dual family $\{\psi_i\}_{i \in I}$ is \emph{$L$-Lipschitz}, if each $\psi_i$ is $L$-Lipschitz.
\end{defn}

Clearly, for any metric $1$-partition of unity on $X$ and any $L>0$, $L$-Lipschitz dual family always exists. And there exists $N' \in \mathbb N$ such that for each $x \in X$, at most $N'$ of $\psi_i(x)$ are non-zero since $X$ has bounded geometry.

Furthermore, to deal with the case of $p=\infty$, we also need the following topology on $\B(\linf)$.
\begin{defn}
Let $\{B_i\}_{i \in I}$ be a collection of bounded linear operators on $\linf$. If for any $v \in \linf$, the series $\sum_{i\in I}B_iv$ converges point-wise to a vector in $\linf$, and the map
$$\linf \to \linf, \quad v \mapsto \sum_{i \in I} B_i v $$
is a bounded linear operator, then we say $\sum_{i \in I} B_i$ \emph{converges point-wise strongly}.
\end{defn}

Clearly by definition, for a metric $1$-partition of unity $\{\phi_i\}_{i \in I}$, $\sum_{i \in I}\phi_i$ converges point-wise strongly to the identity on $\linf$. For compositions, we have the following elementary observation:
\begin{lem}\label{pwc lem1}
Let $\{B_i\}_{i \in I}$ be a collection of linear bounded operators on $\linf$ such that $\sum_{i \in I}B_i$ converges point-wise strongly, and $A \in \B(\linf)$. Then:
\begin{enumerate}
  \item $\sum_{i \in I}B_iA$ converges point-wise strongly, and we have $\big( \sum_{i \in I} B_i\big)A=\sum_{i \in I}B_iA$.
  \item If $A$ is a band operator, then $\sum_{i \in I}AB_i$ converges point-wise strongly, and we have $A\big( \sum_{i \in I} B_i\big)=\sum_{i \in I}AB_i$.
\end{enumerate}
\end{lem}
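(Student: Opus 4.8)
The statement to prove is Lemma \ref{pwc lem1}: given a point-wise strongly convergent sum $\sum_{i\in I} B_i$ on $\linf$ and $A \in \B(\linf)$, the composites $\sum_{i\in I} B_iA$ (always) and $\sum_{i\in I} AB_i$ (when $A$ is a band operator) also converge point-wise strongly, and the obvious distributivity identities hold. The plan is to treat the two parts separately, since they are genuinely different: part (1) is soft and uses only continuity of $A$ and the definition of point-wise strong convergence, while part (2) needs the finite-band structure of $A$ to control where mass can come from.

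For part (1), I would fix $v \in \linf$ and set $w := Av \in \linf$. Then $\sum_{i\in I} B_i w$ converges point-wise to a vector in $\linf$ and $v' \mapsto \sum_i B_i v'$ is bounded, by hypothesis; but $B_i A v = B_i w$, so $\sum_i (B_iA)v = \sum_i B_i w$ converges point-wise to $\big(\sum_i B_i\big)(Av) = \big(\sum_i B_i\big)A\,v$, which lies in $\linf$. Hence $\sum_i B_iA$ converges point-wise strongly to the operator $\big(\sum_i B_i\big)A$, and the identity is immediate. This part is essentially a tautology once one unwinds the definition.

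For part (2), let $A$ be a band operator of propagation at most $r$, and fix $v \in \linf$. The key point is that for a fixed $x \in X$, the value $(AB_iv)(x) = \sum_{y} A_{xy}(B_iv)(y)$ only depends on the finitely many values $(B_iv)(y)$ with $d(x,y)\le r$, i.e.\ on $(B_iv)$ restricted to the finite ball $B(x,r)$ (finiteness uses bounded geometry of $X$). Since $\sum_i B_iv$ converges point-wise, for each such $y$ the series $\sum_i (B_iv)(y)$ converges; as $B(x,r)$ is finite, we may interchange the (finite) sum over $y \in B(x,r)$ with the sum over $i$, giving $\sum_i (AB_iv)(x) = \sum_{y\in B(x,r)} A_{xy}\big(\sum_i (B_iv)(y)\big) = \sum_y A_{xy}\big(\sum_i B_iv\big)(y) = \big(A\sum_i B_i v\big)(x)$. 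This shows the series $\sum_i AB_iv$ converges point-wise to $A\big(\sum_i B_iv\big) \in \linf$; boundedness of $v \mapsto A\big(\sum_i B_iv\big)$ follows from boundedness of $A$ composed with $v \mapsto \sum_i B_iv$. Thus $\sum_i AB_i$ converges point-wise strongly to $A\big(\sum_i B_i\big)$.

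**Main obstacle.** The only subtlety is the interchange of summation in part (2): one is not free to swap $\sum_i$ and $\sum_y$ for a general infinite matrix $A$, since point-wise convergence of $\sum_i B_iv$ gives no uniform control. The band hypothesis is exactly what rescues this — it collapses the inner sum over $y$ to a finite one (over $B(x,r)$, finite by bounded geometry), after which the interchange is trivial. I would flag this explicitly, and note that uniform boundedness of the $\|A_{xy}\|$ (part of the definition of band operator) together with boundedness of the partial sums $v \mapsto \sum_{i\in J}B_i v$ is what one would invoke if one preferred a dominated-convergence style argument instead; but the finite-ball reduction is cleaner and is the route I would take.
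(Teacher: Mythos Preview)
Your proof is correct. For part (1) you match the paper exactly: the paper simply records that it ``holds directly by definition'', which is precisely your unwinding with $w=Av$.

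For part (2) you take a route that is formally different from the paper's. The paper invokes Lemma~\ref{dec of BO} to write the band operator $A$ as a finite sum $\sum_{k=1}^{N} f_k V_k$ of multiplication operators composed with partial translations, and then checks the claim for each of these two building blocks separately (both being one-line verifications: for a multiplication operator $f$ one has $(fB_iv)(x)=f(x)(B_iv)(x)$, and for a partial translation $V$ one has $(VB_iv)(x)=(B_iv)(t^{-1}(x))$, so in each case point-wise convergence transfers immediately). You instead work directly with the matrix coefficients, using finite propagation together with bounded geometry to see that $(Au)(x)$ depends only on $u|_{B(x,r)}$, a finite set of coordinates, so that the interchange $\sum_i \sum_{y\in B(x,r)} = \sum_{y\in B(x,r)} \sum_i$ is a finite-sum triviality. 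Both arguments exploit exactly the same locality of band operators; yours is slightly more self-contained (no appeal to the decomposition lemma), while the paper's is slightly more modular (it reduces to the two generators of $\BD$).
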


(1) holds directly by definition and for (2), we only need to verify in the case that $A$ is a multiplication operator or a partial translation by Lemma \ref{dec of BO}. Both of them are straightforward, hence we omit the proof.

Now we are in the position to construct operators via blocks. To unify the statements, we take the liberty of calling a \emph{metric $p$-partition of unity} for $p\in \{0,\infty\}$ instead of a metric $1$-partition of unity.

\begin{lem}\label{cstrct op}
For $p \in \Dom$, let $\{\phi_i\}_{i\in I}$ be a metric $p$-partition of unity on $X$, and $\{\psi_i\}_{i \in I}$ be a dual family of $\{\phi_i\}_{i \in I}$ when $p\in \{0,1,\infty\}$. Let $J \subseteq I$ and given a collection of bounded linear operators $\{B_i\}_{i \in J}$ on $\lp$ such that $M:=\sup_{i}\|B_i\|$ is finite.
\begin{enumerate}
  \item When $p< \infty$, consider the following:
    \begin{enumerate}
       \item $\sum_{i \in J} \phi_i^{p/q}B_i\phi_i$ if $p \in (1,\infty)$;
       \item $\sum_{i \in J} \phi_i B_i \psi_i$ if $p=0$;
       \item $\sum_{i \in J} \psi_i B_i \phi_i$ if $p=1$.
    \end{enumerate}
    Each of them converges strongly to a band operator of norm at most $M$ on $\lp$.
  \item When $p=\infty$, consider $\sum_{i \in J} \phi_i B_i \psi_i$. It converges point-wise strongly to a band operator of norm at most $M$ on $\linf$.
\end{enumerate}
\end{lem}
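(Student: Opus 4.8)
The plan is to treat the five series in parallel as far as possible, since they all have the shape $\sum_{i\in J}L_iB_iR_i$ where $L_i,R_i$ are (powers of) the partition-of-unity functions $\phi_i$ and their duals $\psi_i$, chosen so that the ``$p$-sum'' of the left multipliers and the ``$q$-sum'' of the right multipliers are each bounded by $1$ pointwise. First I would record the finite-partial-sum estimate: for any finite $J_0\subseteq J$ and any $\xi\in\lp$ (respectively $\xi\in\linf$ with finite support when $p=\infty$, to be upgraded later), estimate $\|\sum_{i\in J_0}L_iB_iR_i\xi\|$ by grouping the $x$-coordinates and using the local finiteness (at most $N$ of the $\phi_i(x)$, at most $N'$ of the $\psi_i(x)$ are nonzero) together with H\"older's inequality in the index $i$: $\|\sum_{i\in J_0}L_i B_i R_i\xi\|\le M\,\|\xi\|$. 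Concretely, for $p\in(1,\infty)$ one writes $\|\sum_i\phi_i^{p/q}B_i\phi_i\xi\|^p\le\sum_i\phi_i(x)^p$-weighted sums and applies H\"older with exponents $q,p$; for $p=1$ the function $\psi_i$ is $\equiv 1$ on $\supp\phi_i$ so $\psi_iB_i\phi_i\xi$ is controlled by $\sum_i\|B_i\phi_i\xi\|_1=\sum_i\sum_x\|(B_i\phi_i\xi)(x)\|$, which rearranges to $\le M\sum_x\|\xi(x)\|\cdot(\text{number of }i\text{ with }\phi_i(x)\ne 0)^{?}$ — one must be careful here, and the correct bookkeeping is to apply $\|B_i\|\le M$ to $\phi_i\xi$ and use $\sum_i\phi_i(x)=1$; for $p=0$ and $p=\infty$ the supremum over $x$ and the fact that at most $N$ of the $\phi_i(x)$ are nonzero does the job, with $\psi_i$ absorbing the ``spreading'' caused by $B_i$.

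Next I would establish the band property of the limit. Since each $\phi_i,\psi_i$ has support of diameter at most some uniform $D$ (for $\psi_i$: diameter at most $D+2R$), the operator $L_iB_iR_i$ has the property that its $xy$-matrix coefficient vanishes unless both $x$ and $y$ lie within distance $D$ (or $D+2R$) of $\supp\phi_i$; hence, by the triangle inequality, $(L_iB_iR_i)_{xy}=0$ whenever $d(x,y)>2D+2R$. Because at each $x$ only finitely many (at most $N$ or $N'$) terms are nonzero, the sum $\sum_{i\in J}(L_iB_iR_i)_{xy}$ is a \emph{finite} sum for each fixed $(x,y)$, and it still vanishes for $d(x,y)>2D+2R$; this gives a well-defined $X$-by-$X$ matrix of uniformly bounded finite propagation. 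Combined with the norm bound $M$ from the first step, Lemma~\ref{matrix to op for finite p} (for $p\in\dom$) resp.\ Lemma~\ref{op to matrix coeff}/Lemma~\ref{cstrct op}'s hypotheses (for $p=\infty$) identify this matrix with a band operator of norm at most $M$. For $p<\infty$, strong convergence of the partial sums then follows from density of finitely supported vectors: on such vectors the partial sums are eventually constant (only finitely many $i$ meet the support), and the uniform bound $M$ plus an $\varepsilon/3$ argument extends strong convergence to all of $\lp$.

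The genuinely new case is $p=\infty$, where finitely supported vectors are not dense, so ``strong convergence'' must be replaced by ``point-wise strong convergence''. Here I would argue directly from the definition: for $v\in\linf$ and fixed $x$, the sum $\sum_{i\in J}(\phi_iB_i\psi_iv)(x)=\sum_{i\in J}\phi_i(x)(B_i(\psi_iv))(x)$ has at most $N$ nonzero terms (those $i$ with $\phi_i(x)\ne 0$), hence converges trivially; call the resulting function $Av$. That $Av\in\linf$ with $\|Av\|\le M\|v\|$ is exactly the finite-partial-sum estimate of the first step applied with $J_0=\{i:\phi_i(x)\ne 0\}$ and then taking $\sup_x$, using $\sum_i\phi_i(x)=1$ and $\|\psi_iv\|\le\|v\|$. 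Linearity and boundedness of $v\mapsto Av$ are then immediate, so $\sum_{i\in J}\phi_iB_i\psi_i$ converges point-wise strongly; the band property of $A$ is the matrix computation above, which is insensitive to which $\ell^p$ we work in.

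\textbf{Main obstacle.} I expect the delicate point to be the norm estimate $\|\sum_{i\in J_0}L_iB_iR_i\|\le M$ uniformly in $J_0$, i.e.\ getting the H\"older bookkeeping exactly right so that the constant is $M$ and not $M$ times a power of $N$ — in particular verifying that the asymmetric choices $\phi_i^{p/q}B_i\phi_i$ (for $1<p<\infty$), $\phi_iB_i\psi_i$ (for $p=0,\infty$), and $\psi_iB_i\phi_i$ (for $p=1$) are precisely the ones that make the two sides of H\"older's inequality combine to the constant-$1$ weight $\sum_i\phi_i(x)^p=1$. The role of the dual family in the cases $p\in\{0,1,\infty\}$ is to supply a right (or left) multiplier that is identically $1$ on $\supp\phi_i$ — so that $\phi_i=\phi_i\psi_i$, making the relevant factor disappear from the estimate — while still having uniformly bounded support so the propagation stays finite; checking that this substitution is legitimate and does not spoil either the norm bound or the band property is where most of the care goes.
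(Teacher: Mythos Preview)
Your proposal is correct and follows essentially the same approach as the paper: pointwise estimates exploiting the local finiteness of the partition and the identity $\sum_i\phi_i(x)^p=1$, density of finitely supported vectors for $p<\infty$, and the direct pointwise argument for $p=\infty$. The one noteworthy difference is the $p=1$ case: the paper bounds $\|\sum_{i\in J}\psi_iB_i\phi_i v\|_1$ by testing against an arbitrary unit $w\in\ell^\infty_{E^*}(X)\cong(\lo)^*$, whereas your direct triangle-inequality computation
\[
\Big\|\sum_{i\in J}\psi_iB_i\phi_i v\Big\|_1 \le \sum_{i\in J}\|B_i\phi_i v\|_1 \le M\sum_{i\in J}\sum_x\phi_i(x)\|v(x)\|_E = M\sum_x\|v(x)\|_E\sum_{i\in J}\phi_i(x)\le M\|v\|_1
\]
is shorter and avoids duality altogether; the paper's dual-pairing route is presumably chosen because it parallels the argument needed in the next lemma (with commutators), but for this lemma your version is cleaner.
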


\begin{proof}
Case (1)(a) is from \cite[Lemma 6.3]{vspakula2017metric}, hence omitted. We start with Case (1)(b). For any $v\in \lz$ with finite support, only finitely many of the terms in $\sum_{i \in J} \phi_i B_i \psi_iv$ is non-zero, hence produces a well-defined vector in $\lz$. For any $x\in X$, we have
\begin{eqnarray}\label{EQ1}
  \big\| \big(\sum_{i \in J} \phi_i B_i \psi_iv\big)(x) \big\|_E &\leq & \sum_{i \in J}\phi_i(x)\cdot\big\|\big(B_i\psi_i v\big)(x)\big\|_E
   \leq  \sum_{i \in J}\phi_i(x)\cdot\|B_i\psi_i v\|_\infty \nonumber\\
   &\leq & \sum_{i \in J}\phi_i(x)\cdot M \cdot \|v\|_\infty
   \leq  M \cdot \|v\|_\infty.
\end{eqnarray}
Due to density of finitely supported vectors in $\lz$, \eqref{EQ1} holds for all $v\in \lz$.

We move on to Case (1)(c): Again, for any $v \in \lo$ with finite support, $\sum_{i \in J} \psi_i B_i \phi_iv$ is a finite sum. For any unit vector $w$ in $\ell^\infty_{E^*}(X) \cong \lo^*$, we have
\begin{eqnarray*}
  \big| \big\langle \sum_{i \in J} \psi_i B_i \phi_iv, w \big\rangle \big| &\leq & \sum_{i\in J} |\langle B_i\phi_iv, \psi_i w\rangle|
   \leq  \sum_{i\in J} \|B_i\| \cdot \|\phi_iv\|_1 \cdot \|\psi_i w\|_\infty \\
   &\leq & M \cdot \big(\sum_{x\in X} \sum_{i\in J} \phi_i(x) \|v(x)\|_E \big) \cdot \|w\|_\infty
   \leq M \cdot \|v\|_1,
\end{eqnarray*}
which implies that $\| \sum_{i \in J} \psi_i B_i \phi_iv \|_1 \leq M~\|v\|_1$. Due to density of finitely supported vectors in $\lo$, the above estimates hold for all $v\in \lo$.

Finally we deal with Case (2). For any $v\in \linf$ and any $x\in X$, we have
$$\big(\sum_{i \in J} \phi_i B_i \psi_iv\big)(x) = \sum_{i \in J}\phi_i(x)\cdot\big(B_i\psi_i v\big)(x)$$
which is a finite sum since $\phi_i(x)$ is non-zero for only finitely many $i \in J$. Furthermore, (\ref{EQ1}) still holds for such $v$, hence the result holds.
\end{proof}

\begin{lem}\label{cstrct op right}
Let $p \in \Dom$, $A$ be a band operator on $\lp$ with propagation at most $r$, $N=\sup_{x\in X} \sharp B(x,r)$ and $\epsilon>0$. Let $\{\phi_i\}_{i\in I}$ be a metric $p$-partition of unity on $X$, and $\{\psi_i\}_{i \in I}$ be a dual family of $\{\phi_i\}_{i \in I}$ when $p \in \{0,1,\infty\}$. Let $J \subseteq I$ and given a collection of bounded operators $\{B_i\}_{i \in J}$ on $\lp$ such that $M:=\sup_{i}\|B_i\|$ is finite.
\begin{enumerate}
  \item When $p<\infty$, consider the following:
    \begin{enumerate}
       \item $\sum_{i \in J} \phi_i^{p/q}B_i[\phi_i,A]$, if $p \in (1,\infty)$ and suppose $\{\phi_i\}_{i\in I}$ has $(r,\epsilon)$-variation;
       \item $\sum_{i \in J} \phi_i B_i [\psi_i,A]$, if $p=0$ and suppose $\{\psi_i\}_{i \in I}$ is $\epsilon/r$-Lipschitz;
       \item $\sum_{i \in J} \psi_i B_i [\phi_i,A]$, if $p=1$ and suppose $\{\phi_i\}_{i\in I}$ has $(r,\epsilon)$-variation.
    \end{enumerate}
    Each of them converges strongly to a band operator of norm $\leq\epsilon NM\|A\|$ on $\lp$.
  \item When $p=\infty$, suppose $\{\psi_i\}_{i \in I}$ is $\epsilon/r$-Lipschitz and consider $\sum_{i \in J} \phi_i B_i [\psi_i,A]$. It converges point-wise strongly to a band operator of norm $\leq\epsilon NM\|A\|$ on $\linf$.
\end{enumerate}
\end{lem}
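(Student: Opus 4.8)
The plan is to deduce Lemma \ref{cstrct op right} from Lemma \ref{cstrct op} by rewriting each sum in terms of the operators $B_i' := B_i[\psi_i,A]$ (resp. $B_i[\phi_i,A]$, resp. $B_i[\phi_i,A]$ in the three finite cases), and then bounding the new coefficient $\sup_i \|B_i'\|$ in place of $M$. The only real work is the norm estimate $\sup_i \|B_i[\psi_i,A]\| \leq \epsilon NM\|A\|$ (and its analogues), after which convergence and the band property come for free: indeed $\sum_{i\in J}\phi_i B_i[\psi_i,A]\psi_i$ is, up to the harmless insertion of a $\psi_i$ on the right (which acts as identity wherever $\phi_i$ does not vanish, so the operator is unchanged), exactly a sum of the type handled by Lemma \ref{cstrct op} with $B_i$ replaced by the bounded operators $B_i[\psi_i,A]$, whence strong (resp. point-wise strong) convergence to a band operator of norm at most $\sup_i\|B_i[\psi_i,A]\|$.

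First I would record that for any Lipschitz function $\phi$ on $X$ and any band operator $A$ of propagation at most $r$, the commutator $[\phi,A]$ has matrix coefficient $([\phi,A])_{xy} = (\phi(x)-\phi(y))A_{xy}$, which vanishes unless $d(x,y)\leq r$. Thus if $\phi$ is $L$-Lipschitz with $Lr \leq \epsilon$, then $\|([\phi,A])_{xy}\| \leq \epsilon \|A_{xy}\|$ whenever it is non-zero. Applying Lemma \ref{dec of BO}-style row/column counting (each row and each column of $[\phi,A]$ has at most $N = \sup_x \sharp B(x,r)$ non-zero entries, each of norm $\leq \epsilon\|A\|$), one gets $\|[\phi,A]\|_{\B(\lp)} \leq \epsilon N\|A\|$ for every $p \in \Dom$. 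Hence $\|B_i[\psi_i,A]\| \leq \|B_i\|\cdot\|[\psi_i,A]\| \leq \epsilon NM\|A\|$ for each $i$, taking $\phi = \psi_i$ and $L = \epsilon/r$ in cases (1)(b) and (2). For cases (1)(a) and (1)(c) the same argument applies with $\phi = \phi_i$: here the $(r,\epsilon)$-variation hypothesis gives $\sum_i|\phi_i(x)-\phi_i(y)|^p < \epsilon^p$ for $d(x,y)\leq r$, which in particular forces $|\phi_i(x)-\phi_i(y)| < \epsilon$ termwise, so again $\|([\phi_i,A])_{xy}\| \leq \epsilon\|A_{xy}\|$ and $\|[\phi_i,A]\| \leq \epsilon N\|A\|$.

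With the uniform bound $\sup_i\|B_i[\psi_i,A]\| \leq \epsilon NM\|A\|$ in hand, I would invoke Lemma \ref{cstrct op} directly: in case (1)(b), $\sum_{i\in J}\phi_i (B_i[\psi_i,A])\psi_i$ converges strongly to a band operator of norm at most $\epsilon NM\|A\|$ on $\lz$, and since $\psi_i\equiv 1$ on $\supp(\phi_i)$ we have $\phi_i(B_i[\psi_i,A])\psi_i = \phi_i B_i[\psi_i,A]$ as operators, giving exactly the claimed statement; cases (1)(a), (1)(c) and (2) are identical, using the respective parts of Lemma \ref{cstrct op} (for $p=\infty$, point-wise strong convergence, noting $[\psi_i,A]$ is itself a band operator so part (2) of Lemma \ref{pwc lem1} is available). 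The main obstacle is genuinely just the commutator norm estimate $\|[\phi,A]\|\leq \epsilon N\|A\|$; once one sees that the commutator kills the diagonal and is supported within propagation $r$, so that the small-variation or Lipschitz hypothesis controls each surviving entry, everything else is bookkeeping that has already been done in Lemma \ref{cstrct op}. One subtlety worth a sentence is that in case (2) one should check $[\psi_i,A]$ is a band operator (it is: propagation $\leq r$, entries uniformly bounded by $\epsilon\|A\|$) so that it, rather than merely a bounded operator, may legitimately be fed into the $p=\infty$ part of Lemma \ref{cstrct op}; but in fact Lemma \ref{cstrct op}(2) only requires the $B_i$ to be bounded with $\sup_i\|B_i\| < \infty$, so even this is automatic.
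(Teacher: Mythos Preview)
Your reduction to Lemma~\ref{cstrct op} contains a genuine gap: the claim that ``inserting $\psi_i$ on the right is harmless because $\psi_i\equiv 1$ on $\supp(\phi_i)$'' confuses left and right multiplication. The identity $\psi_i\equiv 1$ on $\supp(\phi_i)$ gives $\phi_i\psi_i=\phi_i$, i.e.\ absorption on the \emph{left}; it says nothing about the input side. Concretely, $\phi_i B_i[\psi_i,A]\psi_i=\phi_i B_i[\psi_i,A]$ would require $[\psi_i,A](\Id-\psi_i)=0$, i.e.\ that the column support of $[\psi_i,A]$ lies in $\{\psi_i=1\}$. But the $xy$-entry $(\psi_i(x)-\psi_i(y))A_{xy}$ can be nonzero whenever $\psi_i$ is nonconstant on an $r$-ball, so the column support is merely contained in $\mathcal{N}_r(\supp\psi_i)$, not in $\{\psi_i=1\}$.

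For cases (1)(b) and (2) this is repairable: choose a further dual family $\tilde\psi_i$ with $\tilde\psi_i\equiv 1$ on $\mathcal{N}_r(\supp\psi_i)$; then $[\psi_i,A]\tilde\psi_i=[\psi_i,A]$ and Lemma~\ref{cstrct op} applies with $\tilde\psi_i$ in place of $\psi_i$, giving exactly your bound. For case (1)(c), however, the right-hand factor in Lemma~\ref{cstrct op}(1)(c) is the partition function $\phi_i$ itself, which is \emph{not} a plateau, and there is no choice of a new $1$-partition that is identically $1$ on the column support of $[\phi_i,A]$. Replacing $\phi_i$ by a characteristic function $\chi_{\mathcal{N}_r(\supp\phi_i)}$ introduces an extra multiplicity factor in the $\ell^1$ estimate and no longer yields the stated bound $\epsilon NM\|A\|$. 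The paper's proof avoids this by estimating directly via the decomposition $A=\sum_{k=1}^N f_kV_k$ and, crucially for $p=1$, using the \emph{summed} variation $\sum_i|\phi_i(x)-\phi_i(y)|<\epsilon$ inside the duality pairing, rather than the termwise bound $|\phi_i(x)-\phi_i(y)|<\epsilon$ that your operator-norm estimate $\|[\phi_i,A]\|\leq \epsilon N\|A\|$ extracts. That summed inequality is precisely what makes the $\ell^1$ case go through without extra constants, and it is lost in your individual-commutator approach.
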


\begin{proof}
By Lemma \ref{dec of BO}, $A$ has the form of $\sum_{k=1}^N f_k V_k$ where each $f_k:X \to \B(E)$ is a bounded function with norm at most $\|A\|$, and $V_k$ is a partial translation operator in $\BD$ defined by $t_k:D_k \to R_k$ with propagation at most $r$. For any function $\varphi: X \to \R$, $v \in \lp$ and $x\in X$, we have
\begin{equation}\label{EQ5}
\big( [\varphi,V_k]v \big)(x)=
\begin{cases}
  ~\big(\varphi(x)-\varphi(t_k^{-1}(x))\big)v(t_k^{-1}(x)), & x\in R_k; \\
  ~0, & \mbox{otherwise}.
\end{cases}
\end{equation}

Case (1)(a) is from \cite[Lemma 6.4]{vspakula2017metric}, hence omitted. Starting with Case (1)(b): for any finitely supported $v\in \lz$, $\sum_{i \in J} \phi_i B_i [\psi_i,A]v$ is a finite sum. For any $x\in X$,
\begin{eqnarray}\label{EQ2}
  \big\| \big(\sum_{i \in J} \phi_i B_i [\psi_i,A]v\big)(x) \big\|_E &=& \big\| \sum_{i \in J} \phi_i(x) \big(B_i [\psi_i,A]v\big)(x) \big\|_E
  \leq  \sum_{i \in J}\phi_i(x)\cdot\big\|B_i[\psi_i,A] v\big\|_\infty \nonumber \\
  &\leq & \sum_{i \in J}\phi_i(x)\cdot M \cdot \big(\sum_{k=1}^N \|A\| \cdot \|[\psi_i,V_k]v\|_\infty \big) \nonumber\\
  &\leq & M \cdot \big(\sum_{k=1}^N \|A\| \cdot \|[\psi_i,V_k]v\|_\infty \big).
\end{eqnarray}
Since each $\psi_i$ is $\epsilon/r$-Lipschitz and $d(x,t_k^{-1}(x)) \leq r$, we have $\big\| [\psi_i,V_k]v\big\|_\infty \leq \epsilon \|v\|_\infty$ by (\ref{EQ5}). Combining with (\ref{EQ2}), we have
\begin{equation}\label{EQ4}
\big\| \sum_{i \in J} \phi_i B_i [\psi_i,A]v\big\|_\infty \leq M\cdot \big( \sum_{k=1}^N \epsilon \cdot\|A\| \cdot \|v\|_\infty \big) \leq \epsilon MN~\|A\|~\|v\|_\infty.
\end{equation}
Due to density of finitely supported vectors in $\lz$, (\ref{EQ4}) holds for all $v\in \lz$.

We move on to Case (1)(c): Again, for any $v \in \lo$ with finite support, $\sum_{i \in J} \psi_i B_i [\phi_i,A]v$ is a finite sum. For any unit vector $w$ in $\ell^\infty_{E^*}(X) \cong \lo^*$, we have
\begin{eqnarray}\label{EQ3}
  \big| \big\langle \sum_{i \in J} \psi_i B_i [\phi_i,A]v, w \big\rangle \big| &\leq & \sum_{i\in J} \big|\big\langle [\phi_i,A]v, B_i^*\psi_i w\big\rangle\big|
   \leq  \sum_{i\in J} \|[\phi_i,A]v\|_1 \cdot \|B_i^*\| \cdot \|\psi_i w\|_\infty \nonumber\\
   &\leq & M \cdot \big(\sum_{i\in J} \|[\phi_i,A]v\|_1 \big)\cdot \|w\|_\infty
    \leq  M \cdot \sum_{i\in J} \big(\sum_{k=1}^N \|f_k[\phi_i,V_k]v\|_1\big) \nonumber\\
   & \leq & M \cdot\|A\|\cdot \sum_{k=1}^N \big(\sum_{i\in J} \|[\phi_i,V_k]v\|_1\big).
\end{eqnarray}
Note that $\{\phi_i\}_{i\in J}$ has $(r,\epsilon)$-variation and $d(x,t_k^{-1}(x)) \leq r$, hence by (\ref{EQ5}):
$$\sum_{i\in J}\|[\phi_i,V_k]v\|_1=\sum_{x\in R_k}\|v(t_k^{-1}(x))\|_E \cdot \big( \sum_{i \in J} |\phi_i(x)-\phi_i(t_k^{-1}(x))|\big) \leq \epsilon \|v\|_1$$
for each $k$. Combining with (\ref{EQ3}), we have
\begin{equation}\label{EQ6}
\big\| \sum_{i \in J} \psi_i B_i [\phi_i,A]v \big\| \leq M \cdot\|A\|\cdot \sum_{k=1}^N \epsilon \|v\|_1 \leq \epsilon MN~\|A\|~\|v\|_1.
\end{equation}
Due to density of finitely supported vectors in $\lo$, (\ref{EQ6}) holds for all $v\in \lo$.

Finally we deal with Case (2). For any $v\in \linf$ and any $x\in X$, we have
$$\big(\sum_{i \in J} \phi_i B_i [\psi_i,A]v\big)(x) = \sum_{i \in J}\phi_i(x)\cdot\big(B_i[\psi_i,A] v\big)(x)$$
which is a finite sum since $\phi_i(x)$ is non-zero for only finitely many $i \in J$. Furthermore, (\ref{EQ2}) and (\ref{EQ4}) still hold for such $v$, hence the result holds.
\end{proof}

On the other hand, we have the following lemma, whose proof is quite similar to the above, hence omitted.

\begin{lem}\label{cstrct op left}
Let $p \in \Dom$, $A$ be a band operator on $\lp$ with propagation at most $r$, $N=\sup_{x\in X} \sharp B(x,r)$ and $\epsilon>0$. For $p \in (1,\infty)$, let $\{\phi_i\}_{i\in I}$ be a metric $q$-partition of unity on $X$, where $q$ is the conjugate exponent of $p$; and for $p \in \{0,1,\infty\}$, let $\{\phi_i\}_{i\in I}$ be a metric $p$-partition of unity on $X$, and $\{\psi_i\}_{i \in I}$ be a dual family of $\{\phi_i\}_{i \in I}$. Let $J \subseteq I$ and given a collection of bounded operators $\{B_i\}_{i \in J}$ on $\lp$ such that $M:=\sup_{i}\|B_i\|$ is finite.
\begin{enumerate}
  \item When $p<\infty$, consider the following:
    \begin{enumerate}
       \item $\sum_{i \in J} [\phi_i,A] B_i \phi_i^{q/p}$, if $p \in (1,\infty)$ and suppose $\{\phi_i\}_{i\in I}$ has $(r,\epsilon)$-variation;
       \item $\sum_{i \in J} [\phi_i,A] B_i \psi_i$, if $p=0$ and suppose $\{\phi_i\}_{i\in I}$ has $(r,\epsilon)$-variation;
       \item $\sum_{i \in J} [\psi_i,A] B_i \phi_i$, if $p=1$ and suppose $\{\psi_i\}_{i \in I}$ is $\epsilon/r$-Lipschitz.
    \end{enumerate}
    Each of them converges strongly to a band operator of norm $\leq\epsilon NM\|A\|$ on $\lp$.
  \item When $p=\infty$, suppose $\{\phi_i\}_{i\in I}$ has $(r,\epsilon)$-variation and consider $\sum_{i \in J} [\phi_i,A] B_i \psi_i$. It converges point-wise strongly to a band operator of norm $\leq\epsilon NM\|A\|$ on $\linf$.
\end{enumerate}
\end{lem}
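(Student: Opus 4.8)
The plan is to transcribe the proof of Lemma~\ref{cstrct op right}, interchanging the roles of the ``inner'' and ``outer'' factors so that the commutator acts on the output side rather than the input side. First I would apply Lemma~\ref{dec of BO} to write $A=\sum_{k=1}^{N}f_k V_k$ with $\|f_k\|\le\|A\|$ and each $V_k$ a partial translation of propagation $\le r$ implemented by a bijection $t_k\colon D_k\to R_k$. Since each $f_k$ is a multiplication operator it commutes with any function $\varphi\colon X\to\R$, so $[\varphi,A]=\sum_{k=1}^{N}f_k[\varphi,V_k]$, and $[\varphi,V_k]$ is given explicitly by (\ref{EQ5}). This reduces every estimate to controlling the differences $\phi_i(x)-\phi_i(t_k^{-1}(x))$ (respectively $\psi_i(x)-\psi_i(t_k^{-1}(x))$) for points at distance $\le r$, which is exactly what the $(r,\epsilon)$-variation of $\{\phi_i\}$ (respectively the $\epsilon/r$-Lipschitz condition on $\{\psi_i\}$) supplies.

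I would then run the four cases in the order of Lemma~\ref{cstrct op right}. Case~(1)(a), for $p\in(1,\infty)$, is handled exactly as in \cite{vspakula2017metric}. For $p=0$ (Case~(1)(b)): a finitely supported $v\in\lz$ makes $\sum_{i\in J}[\phi_i,A]B_i\psi_i v$ a finite sum of vectors in $\lz$ --- a commutator of a multiplication operator with a band operator lies in $\Lz$, which preserves $\lz$ by Lemma~\ref{inv} --- and evaluating at $x\in X$ with $\|(B_i\psi_i v)(t_k^{-1}(x))\|_E\le M\|v\|_\infty$ and $\sum_i|\phi_i(x)-\phi_i(t_k^{-1}(x))|<\epsilon$ gives $\|\sum_{i\in J}[\phi_i,A]B_i\psi_i v\|_\infty\le\epsilon NM\|A\|\,\|v\|_\infty$; density of finitely supported vectors in $\lz$ extends this to all of $\lz$. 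For $p=1$ (Case~(1)(c)): pairing against a unit vector $w\in\ell^\infty_{E^*}(X)\cong\lo^{*}$, bound $\big|\big\langle\sum_{i\in J}[\psi_i,A]B_i\phi_i v,w\big\rangle\big|\le\sum_i\|[\psi_i,A]B_i\phi_i v\|_1\,\|w\|_\infty$, then use $[\psi_i,A]=\sum_k f_k[\psi_i,V_k]$ and the Lipschitz bound $|\psi_i(x)-\psi_i(t_k^{-1}(x))|\le\epsilon$ to get $\|[\psi_i,A]B_i\phi_i v\|_1\le\epsilon NM\|A\|\,\|\phi_i v\|_1$, and finally sum over $i$ using $\sum_i\|\phi_i v\|_1=\sum_{x}\|v(x)\|_E\sum_i\phi_i(x)=\|v\|_1$; density again extends the estimate. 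Case~(2) is the point-wise-strong analogue of Case~(1)(b): for arbitrary $v\in\linf$ and fixed $x$, only the finitely many $i$ with $\phi_i$ not identically zero on $B(x,r)$ contribute to $\big(\sum_{i\in J}[\phi_i,A]B_i\psi_i v\big)(x)$, so the series converges point-wise, and the same point-wise estimate bounds the resulting operator by $\epsilon NM\|A\|$.

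Finally I would check that each limit is a band operator and that the convergence is as stated. The $(x,y)$-matrix coefficient of $[\phi_i,A]B_i\psi_i$ equals $\psi_i(y)\cdot R_x\big([\phi_i,A]B_i\big)E_y$; since $[\phi_i,A]$ has propagation $\le r$ with nonzero rows confined to $\Nd_r(\supp\phi_i)$, while $\psi_i(y)\ne0$ forces $y\in\supp\psi_i\subseteq\Nd_R(\supp\phi_i)$, every nonzero entry lies in $\Nd_{R+r}(\supp\phi_i)\times\Nd_{R+r}(\supp\phi_i)$, a set of uniformly bounded diameter; together with the uniformly bounded supports and bounded overlap of $\{\phi_i\}$ this forces a uniform propagation bound on the sum over $i$ (the computation for $[\psi_i,A]B_i\phi_i$ is identical). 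For $p<\infty$ the partial sums are uniformly bounded by $\epsilon NM\|A\|$ and stabilise on every finitely supported vector, hence converge in the strong operator topology; for $p=\infty$ the point-wise estimate above is precisely the asserted point-wise strong convergence.

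The one genuinely delicate point I expect is this last band-operator claim: one must see that the otherwise arbitrary (non-band) operators $B_i$ cannot spoil the propagation bound. The resolution is that the outer factors --- the $\phi_i$ sitting inside the commutator on the left and the $\psi_i$ (or $\phi_i$) on the right --- confine \emph{both} matrix indices to a ball of radius bounded uniformly in $i$, so $B_i$ only ever enters through a ``window'' of bounded size. Once this is understood, the rest is a routine adaptation of the proof of Lemma~\ref{cstrct op right} with left and right exchanged.
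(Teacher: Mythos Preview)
Your proposal is correct and is precisely the adaptation the paper has in mind: the paper omits the proof entirely, stating only that it is ``quite similar to the above'' (i.e., to Lemma~\ref{cstrct op right}), and your case-by-case transcription---decomposing $A=\sum_k f_k V_k$ via Lemma~\ref{dec of BO}, using the commutator formula~(\ref{EQ5}), and then running the $(r,\epsilon)$-variation or Lipschitz estimate on the output side---is exactly that. Your closing remark about the ``window'' created by the outer factors is the right way to see that the $B_i$ cannot spoil the band property.
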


\subsection{Density of rich band operators}\label{density section}

Having established the technical lemmas above, we are now ready to prove Theorem \ref{density thm}. First let us fix a metric $1$-partition of unity $\{\phi_i\}_{i \in I}$ on $X$. For example, one may take an arbitrary disjoint bounded cover $\{U_i\}_{i \in I}$ of $X$, and $\phi_i$ to be the characteristic function of $U_i$. For each $n \in \N$, take $\{\psi_i^{(n)}\}_{i\in I}$ to be a $1/n$-Lipschitz dual family of $\{\phi_i\}_{i\in I}$.

\begin{lem}\label{app}
For $p \in \{0,1,\infty\}$ and $n\in \N$, define an operator $M_n: \Ap \to \Ap$ by
\begin{enumerate}
  \item $A \mapsto \sum_{i\in I}\phi_iA\psi_i^{(n)}$ if $p=0$ or $\infty$;
  \item $A \mapsto \sum_{i\in I}\psi_i^{(n)}A\phi_i$ if $p=1$.
\end{enumerate}
Then each $M_n$ is a well-defined linear operator of norm at most one. Moreover, $M_n(A)$ is a band operator and $M_n(A) \to A$ in norm as $n \to \infty$ for each $A \in \Ap$.
\end{lem}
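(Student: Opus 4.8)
The plan is to obtain everything from the block-construction Lemmas \ref{cstrct op}, \ref{cstrct op right} and \ref{cstrct op left}, applied with the constant family $B_i = A$ for well-definedness and $B_i = \Id$ for the convergence; throughout fix $n$ and abbreviate $\psi_i := \psi_i^{(n)}$. For $A \in \Ap$ one has $\sup_i \|A\| = \|A\| < \infty$, so Lemma \ref{cstrct op}, applied with $J = I$ and $B_i = A$ in case (1)(b) when $p = 0$, case (1)(c) when $p = 1$, and case (2) when $p = \infty$, shows that the defining series converges strongly (point-wise strongly if $p = \infty$) to a band operator of norm at most $\|A\|$. Thus $M_n$ is well defined, its image consists of band operators in $\Ap$, and $\|M_n\| \le 1$; linearity is immediate by evaluating the series termwise on a fixed vector (and a fixed coordinate when $p = \infty$).

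It remains to prove $M_n(A) \to A$ in norm. Since $\|M_n\| \le 1$ and band operators are dense in $\Ap$, a three-$\epsilon$ argument reduces this to the case where $A$ is a band operator, of propagation at most $r$, say, with $N := \sup_x \sharp B(x, r)$. The crux is the identity
\begin{equation*}
A - M_n(A) =
\begin{cases}
\ \sum_{i \in I} \phi_i [\psi_i, A], & p \in \{0, \infty\}, \\
\ -\sum_{i \in I} [\psi_i, A]\phi_i, & p = 1,
\end{cases}
\end{equation*}
which I would establish as follows. As $\{\phi_i\}$ is a metric $1$-partition of unity, $\sum_i \phi_i$ converges (strongly, or point-wise strongly when $p = \infty$) to $\Id$, so by Lemma \ref{pwc lem1}(1) together with its evident finite-$p$ analogue, $A = \sum_i \phi_i A$ (used for $p \in \{0, \infty\}$) and $A = \sum_i A \phi_i$ (used for $p = 1$). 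Since $\psi_i \equiv 1$ on $\supp(\phi_i)$ we have $\phi_i\psi_i = \phi_i = \psi_i \phi_i$, hence termwise $\phi_i A - \phi_i A \psi_i = \phi_i[\psi_i, A]$ and $A\phi_i - \psi_i A\phi_i = -[\psi_i, A]\phi_i$; subtracting the relevant pair of convergent series gives the displayed identity. Finally, each $\psi_i = \psi_i^{(n)}$ is $(1/n)$-Lipschitz, so applying Lemma \ref{cstrct op right}(1)(b), Lemma \ref{cstrct op right}(2), respectively Lemma \ref{cstrct op left}(1)(c), with $B_i = \Id$ (so $M = 1$) and $\epsilon = r/n$, we conclude that $A - M_n(A)$ is a band operator of norm at most $\epsilon N M \|A\| = r N \|A\| / n \to 0$.

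I expect the only delicate point to be the bookkeeping of the mode of convergence when $p = \infty$: there the series above converge merely point-wise strongly rather than strongly, so the termwise subtraction yielding the displayed identity must be carried out after evaluating both sides at an arbitrary $v \in \linf$ and an arbitrary $x \in X$, where it reduces to elementary facts about convergent series of vectors in $E$. The remaining ingredients --- linearity of $M_n$, the three-$\epsilon$ reduction to band operators, and verifying that the limit operator supplied by Lemma \ref{cstrct op right}/\ref{cstrct op left} is indeed $A - M_n(A)$ --- are routine.
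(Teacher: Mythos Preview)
Your proposal is correct and follows essentially the same approach as the paper: well-definedness and the norm bound via Lemma \ref{cstrct op} with $B_i = A$, reduction to band operators using $\|M_n\|\le 1$, and the decomposition $A - M_n(A) = \sum_i \phi_i[\psi_i,A]$ (resp.\ $-\sum_i[\psi_i,A]\phi_i$) bounded by Lemma \ref{cstrct op right}/\ref{cstrct op left} with $B_i=\Id$ and $\epsilon=r/n$. The paper's proof is organized identically, writing $M_n(A) = \sum_i \phi_i\psi_i A + \sum_i \phi_i[A,\psi_i]$ (and analogously for $p=1$) rather than subtracting, but this is purely cosmetic; your handling of the point-wise strong convergence for $p=\infty$ via Lemma \ref{pwc lem1} also matches the paper's.
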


\begin{proof}
By Lemma \ref{cstrct op}, $M_n$ is well-defined and has norm at most one. Clearly, $M_n(A)$ is a band operator for all $A\in \Ap$. For the convergence statement, we treat them separately.

$\bullet ~p=0$ or $\infty$: for each $n$ and any \emph{band} operator $A$,
$$M_n(A)=\sum_{i\in I}\phi_iA\psi_i^{(n)}=\sum_{i\in I} \phi_i\psi_i^{(n)}A + \sum_{i \in I} \phi_i[A,\psi_i^{(n)}]$$
where all sums converge strongly when $p=0$, and point-wise strongly when $p=\infty$ by Lemma \ref{cstrct op} and \ref{cstrct op right}. Since $\{\psi_i^{(n)}\}_{i \in I}$ is a dual family of $\{\phi_i\}_{i \in I}$, we have $\sum_{i\in I} \phi_i\psi_i^{(n)}A=\sum_{i\in I} \phi_iA$, which converges strongly to $A$ when $p=0$, and converges point-wise strongly to $A$ when $p=\infty$ by Lemma \ref{pwc lem1}. For the second term, it has norm at most $\|A\|N/n$ for some fixed $N$ by Lemma \ref{cstrct op right}, hence tends to $0$. Finally note that $\|M_n\| \leq 1$ for all $n$, so the result also holds for band-dominated operators.

$\bullet ~p=1$: for each $n$ and any \emph{band} operator $A$,
$$M_n(A)=\sum_{i\in I}\psi_i^{(n)}A\phi_i=\sum_{i\in I} A\psi_i^{(n)}\phi_i - \sum_{i \in I}[A,\psi_i^{(n)}] \phi_i$$
where all sums converge strongly by Lemma \ref{cstrct op} and \ref{cstrct op left}. Again we have $$\sum_{i\in I} A\psi_i^{(n)}\phi_i=\sum_{i\in I} A\phi_i=A.$$
For the second term, it has norm at most $\|A\|N/n$ for some fixed $N$ by Lemma \ref{cstrct op left}, hence tends to $0$. Finally note that $\|M_n\| \leq 1$ for all $n$, so the result holds for band-dominated operators as well.
\end{proof}

Now the proof of Theorem \ref{density thm} follows from the same argument as that of \cite[Theorem 6.6]{vspakula2017metric}, while replacing \cite[Corollary 6.5]{vspakula2017metric} with Lemma \ref{app} we just established above, hence omitted. Also notice that \cite[Corollary 6.5]{vspakula2017metric} is the only place where Property A is used in proving \cite[Theorem 6.6]{vspakula2017metric}, hence it is not necessary for Theorem \ref{density thm} since Lemma \ref{app} holds generally.

\section{The Main Theorem}\label{main thm section}

We are now in the position to state our main theorem, which characterises the properties of $\P$-Fredholmness and invertibility at infinity for a rich band-dominated operator in terms of its operator spectrum.

\begin{thm}\label{main thm}
Let $X$ be a space with Property A, $E$ a Banach space and $p \in \Dom$. Let $A$ be a rich band-dominated operator on $\lp$. Then the following are equivalent:
\begin{enumerate}
  \item $A$ is invertible at infinity;
  \item $A$ is $\P$-Fredholm;
  \item $A$ is invertible in $\Ap$ modulo $\Kp$;
  \item all the limit operators $\Phi_\omega(A)$ are invertible, and $\sup_{\omega \in \partial X}\|\Phi_\omega(A)^{-1}\|$ is finite;
  \item all the limit operators $\Phi_\omega(A)$ are invertible.
\end{enumerate}
\end{thm}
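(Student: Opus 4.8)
The plan is to follow the architecture of the $p\in(1,\infty)$ proof from \cite{vspakula2017metric}, pushing each piece through the extreme cases with the tools built above, and to organise the equivalences so that the genuinely hard direction is isolated. First, the easy implications: $(2)\Rightarrow(1)$ and $(3)\Rightarrow(2)$ are immediate from the definitions (a $\P$-Fredholm parametrix is in particular a parametrix, and an inverse modulo $\Kp$ in $\Ap$ lies in $\Lp$ since $\Ap\subseteq\Lp$). For $(1)\Rightarrow(4)$ and $(1)\Rightarrow(5)$, I would show that each limit operator inherits invertibility from a parametrix: if $AB-\Id, BA-\Id\in\Kp$, then applying $\Phi_\omega$ — which is a contractive homomorphism on rich operators by Proposition~\ref{limit op homo} — kills $\P$-compact operators in the sense that $\Phi_\omega$ of a $\P$-compact operator is zero (this needs the standard fact that $\P$-compacts are limits of finite-rank cutdowns $P_F T P_F$, whose translates vanish in the limit along any $\omega\in\partial X$). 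One must first check $B$ can be taken rich: this is where the density theorem~\ref{density thm} and a limit-operator-calculus argument enter, exactly as in \cite{vspakula2017metric}, reducing to richness of a parametrix via uniform approximation by rich band operators. Then $\Phi_\omega(A)\Phi_\omega(B)=\Phi_\omega(B)\Phi_\omega(A)=\Id$, giving invertibility with $\|\Phi_\omega(A)^{-1}\|\le\|B\|$ uniformly, so $(1)\Rightarrow(4)$, and $(4)\Rightarrow(5)$ is trivial.

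The substantive content is $(5)\Rightarrow(3)$ (hence $(5)\Rightarrow(2)\Rightarrow(1)$, closing the cycle, with $(3)\Rightarrow(4)$ absorbed along the way). Here I would construct a global parametrix for $A$ in $\Ap$ by patching together local inverses of the limit operators. The scheme: using Property A, fix a metric $p$-partition of unity $\{\phi_i\}$ with $(r,\epsilon)$-variation (for appropriate $r,\epsilon$ depending on the propagation of a band-operator approximant to $A$ and on the uniform bound on the inverses), together with an $\epsilon/r$-Lipschitz dual family $\{\psi_i\}$. Over each support block, $A$ looks — via the local coordinates of Proposition~\ref{local isom} and \ref{local isom op} — like a cutdown of some limit operator $\Phi_\omega(A)$, which is invertible; pulling back the inverse gives local parametrices $B_i$ with $\sup_i\|B_i\|<\infty$ (this is where uniform boundedness of the $\|\Phi_\omega(A)^{-1}\|$ is used, and where one must first upgrade $(5)$ to uniform boundedness — the Špakula–Willett argument for this uses a compactness/ultrafilter argument on $\partial X$ together with the lower semicontinuity of $\|\cdot^{-1}\|$, and should transfer verbatim). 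Then assemble $B:=\sum_i\psi_i B_i\phi_i$ (for $p=1$), $\sum_i\phi_i B_i\psi_i$ (for $p=0,\infty$), which converges (point-wise) strongly to a band operator by Lemma~\ref{cstrct op}. Computing $AB-\Id$ and $BA-\Id$ produces, after the standard bookkeeping, a sum of a term of the form $\sum_i (\text{stuff}) B_i [\phi_i\text{ or }\psi_i, A]$ controlled in norm by $\epsilon NM\|A\|$ via Lemmas~\ref{cstrct op right} and \ref{cstrct op left}, plus a $\P$-compact error coming from the blocks where the local model fails — i.e., $AB-\Id\in\Kp$ up to an arbitrarily small norm perturbation, so $AB-\Id\in\Kp$ after a Neumann-series correction. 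This gives $(5)\Rightarrow(3)$.

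The main obstacle I anticipate is the $p=\infty$ case of the parametrix assembly: the series defining $B$ and the products $AB$, $BA$ converge only point-wise strongly, not in norm, and finitely supported vectors are not dense in $\linf$, so the usual density reductions in the norm estimates are unavailable. The remedy, as foreshadowed in the introduction, is the double-predual trick: an operator $T\in\Sinf$ is determined by its restriction to $c_0(X)=\linfz$ (Lemma~\ref{op to matrix coeff}, Lemma~\ref{inv}), and $\|T\|=\sup_F\|P_FTP_F\|$, so it suffices to establish the parametrix identities after compressing to finite blocks, where everything is honestly finite-dimensional and the commutant/point-wise-strong convergence of Lemma~\ref{pwc lem1} legitimises interchanging $A$ with the infinite sum. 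A secondary subtlety is checking that all the operators produced — $B$, $AB-\Id$, the error terms — genuinely land in $\Ap$ (not merely $\Lp$) and, where needed, are rich; for $B$ this follows since it is a strong limit of band operators of bounded norm with uniformly bounded propagation, hence band, and richness propagates through the construction because the $B_i$ are pulled back from limit operators. I would handle the $p\in\{0,1\}$ cases first (where norm density of finitely supported vectors survives and the argument is closest to \cite{vspakula2017metric}), then treat $p=\infty$ by the $c_0$-restriction reduction.
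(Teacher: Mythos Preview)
Your $(5)\Rightarrow(3)$ parametrix assembly is essentially the paper's approach and is fine. The gap is in $(1)\Rightarrow(4)$. You propose to apply $\Phi_\omega$ to the parametrix equations $AB=\Id+K_1$, $BA=\Id+K_2$; but $\Phi_\omega$ is only defined on \emph{rich band-dominated} operators, while condition (1) gives merely $B\in\B(\lp)$. Your proposed fix --- ``check $B$ can be taken rich via the density theorem, exactly as in \cite{vspakula2017metric}'' --- is neither correct nor what \cite{vspakula2017metric} does. Theorem~\ref{density thm} says rich band operators are dense among rich band-dominated operators; it gives no handle on an arbitrary bounded $B$, and neither paper contains a limit-operator-calculus route to richness (or even band-domination) of the parametrix.

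The paper's actual route is different and avoids this issue entirely. It first proves $(1)\Leftrightarrow(2)$ directly by a commutator argument: writing $[f,B]=B[A,f]B+B\tilde f K_1-K_2\tilde f B$ and controlling each piece shows $B$ lies in the quasi-local class $\mathcal{C}\subseteq\Lp$ (Definition~\ref{class C}, Lemma~\ref{class C lem2}). Then $(2)\Rightarrow(4)$ is proved \emph{without ever applying $\Phi_\omega$ to $B$}: Proposition~\ref{bdd below} shows each $\Phi_\omega(A)$ is bounded below with constant $1/\|B\|$ by using the local coordinates of Lemma~\ref{norm app} to transfer a finitely supported unit vector from $X(\omega)$ to $X$, far enough out that $K_2$ acts with small norm. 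Surjectivity of $\Phi_\omega(A)$ then comes from duality --- adjoints for $p<\infty$ (Lemma~\ref{adj limit op}), and for $p=\infty$ by restriction to $c_0(X)$ (Proposition~\ref{restriction inv}, Corollary~\ref{res limit}). This is where the double-predual trick actually enters, not in the parametrix assembly as you anticipated; the paper explicitly flags that deducing (4) directly from (1) is problematic for $p=\infty$, which is precisely why this detour exists. A secondary point: $(5)\Rightarrow(4)$ does not transfer verbatim from \cite{vspakula2017metric} either --- the lower-norm-localisation step there uses Property~A, and the paper replaces it for $p\in\{0,1,\infty\}$ with a uniform commutator estimate (Proposition~\ref{local norm}) that avoids Property~A, then handles $p\in\{0,1\}$ by passing to $p=\infty$ via extension (Lemma~\ref{ext lem}) and adjoints.
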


Note that for $p\in (1,\infty)$, the above theorem is exactly \cite[Theorem 5.1]{vspakula2017metric}. Our major work here is to fill the gaps of $p \in \{0,1,\infty\}$. Meanwhile, it is somewhat surprising that in these extreme cases, some parts of the theorem holds without the assumption of Property A, for example ``(4) $\Leftrightarrow$ (5)".

The proof occupies the rest of the paper, which is divided into several parts. In Section \ref{commt tech}, we deal with the equivalence between (1) and (2); from Section \ref{2 to 4 finite} to \ref{construct para}, we establish the equivalence between (2), (3) and (4); and finally, in Section \ref{unif bdd}, we prove ``(4) $\Leftrightarrow$ (5)".

Careful readers might have noticed that in the case of $p \in (1,\infty)$ \cite{vspakula2017metric}, ``(1) $\Leftrightarrow$ (2)" was proved implicitly via showing ``(1) $\Rightarrow$ (4) $\Rightarrow$ (3)". While in the case of $p=\infty$ as we will see in Section \ref{dual space}, it would be difficult to deduce (4) directly from (1). Hence, we provide a direct proof for ``(1) $\Leftrightarrow$ (2)" in Section \ref{commt tech}, which works for all $p \in \Dom$ as well.

\subsection{Commutant technique}\label{commt tech}

We prove ``(1) $\Leftrightarrow$ (2)" using a commutant technique, inspired by the classical limit operator theory for $\mathbb{Z}^n$ and part of the results in \cite{spakula2017relative}. Let us start with the following class of operators on $\lp$.
\begin{defn}\label{class C}
For $p \in \Dom$, we define the class $\mathcal{C}$ of bounded linear operators on $\lp$ as follows: $T \in \mathcal C$ \emph{if and only if} for any $\epsilon>0$, there exists some $L>0$ such that for any $L$-Lipschitz function $f \in C_b(X)_1$, we have $\|[T,f]\|< \epsilon$.
\end{defn}

We remark that elements in $\mathcal C$ are exactly quasi-local operators (see \cite{spakula2017relative}). The following lemma is direct from definition, hence the proof is omitted.
\begin{lem}\label{class C lem1}
For any $p \in \Dom$, $\mathcal C$ is a closed subalgebra in $\B(\lp)$ and $\BD \subseteq \mathcal C$. Hence $\Ap \subseteq \mathcal C$.
\end{lem}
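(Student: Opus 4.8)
The plan is to verify the two assertions of Lemma \ref{class C lem1} in the natural order: first that $\mathcal{C}$ is a closed subalgebra, then that $\BD \subseteq \mathcal{C}$ (the final inclusion $\Ap \subseteq \mathcal{C}$ being immediate from these two, since $\Ap$ is by definition the norm-closure of $\BD$). Throughout, fix $p \in \Dom$ and recall that $C_b(X)_1$ denotes the unit ball of $C_b(X)$.

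\emph{Step 1: $\mathcal{C}$ is a linear subspace.} That $0 \in \mathcal{C}$ is trivial, and scalar multiples are handled by rescaling the $\epsilon$. For $S, T \in \mathcal{C}$ and $\epsilon > 0$, pick $L_S$ witnessing $\epsilon/2$ for $S$ and $L_T$ witnessing $\epsilon/2$ for $T$; then $L := \max\{L_S, L_T\}$ works for $S+T$, using $[S+T,f] = [S,f] + [T,f]$ and the triangle inequality.

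\emph{Step 2: $\mathcal{C}$ is closed under multiplication.} Use the Leibniz identity $[ST,f] = S[T,f] + [S,f]T$. Given $S,T \in \mathcal{C}$ and $\epsilon > 0$, we may assume $S, T \neq 0$; choose $L_T$ witnessing $\epsilon/(2\|S\|)$ for $T$ and $L_S$ witnessing $\epsilon/(2\|T\|)$ for $S$, and take $L = \max\{L_S, L_T\}$. For any $L$-Lipschitz $f \in C_b(X)_1$ we then get $\|[ST,f]\| \le \|S\|\,\|[T,f]\| + \|[S,f]\|\,\|T\| < \epsilon$.

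\emph{Step 3: $\mathcal{C}$ is norm-closed.} Suppose $T_n \to T$ in $\B(\lp)$ with each $T_n \in \mathcal{C}$, and let $\epsilon > 0$. Pick $n$ with $\|T_n - T\| < \epsilon/3$; for an $L$-Lipschitz $f \in C_b(X)_1$ with $L$ chosen to witness $\epsilon/3$ for $T_n$, the bound $\|[T,f]\| \le \|[T - T_n, f]\| + \|[T_n,f]\| \le 2\|T-T_n\| + \|[T_n,f]\| < \epsilon$ (using $\|f\| \le 1$) shows $T \in \mathcal{C}$.

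\emph{Step 4: $\BD \subseteq \mathcal{C}$.} By Lemma \ref{dec of BO} it suffices to treat multiplication operators and partial translation operators, since $\BD$ is generated by these and $\mathcal{C}$ is a subalgebra by Steps 1--3. A multiplication operator $\rho(g)$ commutes with every $f \in C_b(X)$, so $[\rho(g), f] = 0$ and there is nothing to prove. For a partial translation operator $V$ given by $t: D \to R$ with $c := \sup_{x \in D} d(x, t(x)) < \infty$, one computes from \eqref{EQ5}-type formulas that for $f \in C_b(X)$, $([V,f]v)(y) = (f(y) - f(t^{-1}(y)))\, v(t^{-1}(y))$ for $y \in R$ and $0$ otherwise; hence $\|[V,f]\| \le \sup_{x \in D} |f(t(x)) - f(x)| \le Lc$ whenever $f$ is $L$-Lipschitz. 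Given $\epsilon > 0$, choosing $L < \epsilon/c$ (or $L$ arbitrary if $c = 0$) yields $\|[V,f]\| < \epsilon$, so $V \in \mathcal{C}$.

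The argument involves no real obstacle; the only point requiring a little care is making the $\epsilon$--$L$ quantifiers match up correctly in the product and closure steps, and recalling the explicit matrix description of $[V,f]$ for a partial translation in Step 4. Since the proof is this routine, the paper is justified in omitting it, as the statement indicates.
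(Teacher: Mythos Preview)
Your proposal is correct in approach and is exactly the routine verification the paper has in mind (the paper omits the proof entirely). One small slip: in Steps~1 and~2 you set $L = \max\{L_S, L_T\}$, but since an $L$-Lipschitz function is automatically $L'$-Lipschitz for every $L' \ge L$, you need $L = \min\{L_S, L_T\}$ so that every $L$-Lipschitz $f \in C_b(X)_1$ is simultaneously $L_S$-Lipschitz and $L_T$-Lipschitz. (Your Step~4, where you correctly take $L < \epsilon/c$, shows you have the right picture of the quantifiers.) With that correction the argument goes through verbatim.
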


\begin{rem}
Conversely, we would like to mention that very recently after the paper is finished, \v{S}pakula and the author \cite{SZ2018} showed that $\Ap=\mathcal C$ either if $X$ has Property A and $p \in (1,\infty)$, or without any assumption on $X$ when $p\in \{0,1,\infty\}$, partially using the tools developed in Section \ref{partition of unity} to approach the latter. While we do not need appeal to that result in this paper.
\end{rem}

\begin{lem}\label{class C lem2}
For any $p \in \Dom$, $\mathcal C$ is contained in $\Lp$.
\end{lem}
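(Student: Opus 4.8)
The plan is to check the definition of $\Lp$ head-on: fix $T \in \mathcal C$ and a finite set $F' \in \F$, and show $\|P_{F'}TQ_F\| \to 0$ and $\|Q_FTP_{F'}\| \to 0$ as $F$ runs through $\F$. The whole point is to replace the finite-rank information carried by $P_{F'}$ by a \emph{Lipschitz} cutoff, on which quasi-locality of $T$ can bite. So, given $\epsilon>0$, let $L>0$ be the constant from Definition \ref{class C} for which $\|[T,f]\|<\epsilon$ for every $L$-Lipschitz $f\in C_b(X)_1$.

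Next I would build a bump function adapted to $F'$, namely $f(x):=\max\{0,\,1-L\,d(x,F')\}$. It takes values in $[0,1]$ and is $L$-Lipschitz (truncation at $0$ does not increase the Lipschitz constant), hence $f\in C_b(X)_1$; moreover $f\equiv 1$ on $F'$, and $\supp(f)\subseteq \Nd_{1/L}(F')=:F''$, which lies in $\F$ because $X$ has bounded geometry (a finite union of balls of a fixed radius is finite). Two bookkeeping identities then follow: since $\chi_{F'}f=\chi_{F'}$ we get $P_{F'}\rho(f)=\rho(f)P_{F'}=P_{F'}$; and since $\supp(f)\subseteq F''$ we get $\rho(f)Q_F=Q_F\rho(f)=0$ whenever $F\supseteq F''$.

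With these in hand the estimate is immediate. For $F\supseteq F''$, insert $\rho(f)$ and move it past $T$:
$$P_{F'}TQ_F=P_{F'}\rho(f)TQ_F=P_{F'}T\rho(f)Q_F+P_{F'}[\rho(f),T]Q_F=P_{F'}[\rho(f),T]Q_F,$$
because $\rho(f)Q_F=0$; hence $\|P_{F'}TQ_F\|\le\|[T,f]\|<\epsilon$. The mirror-image computation, using $P_{F'}=\rho(f)P_{F'}$ and $Q_F\rho(f)=0$, gives $Q_FTP_{F'}=Q_F[T,\rho(f)]P_{F'}$, so $\|Q_FTP_{F'}\|<\epsilon$. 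Letting $\epsilon\to 0$ shows both nets tend to $0$, i.e. $T\in\Lp$. There is no genuine obstacle here; the only steps deserving a line of care are the finiteness of $\supp(f)$ (this is where bounded geometry enters) and the absorption identity $P_{F'}\rho(f)=P_{F'}$, which comes from $f\equiv 1$ on $F'$.
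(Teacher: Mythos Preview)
Your argument is correct and follows essentially the same route as the paper: choose an $L$-Lipschitz bump function that is identically $1$ on the fixed finite set $F'$ and supported in a finite neighbourhood, then use the commutator bound from the definition of $\mathcal C$ to control $P_{F'}TQ_F$ (and symmetrically $Q_FTP_{F'}$). The only differences are cosmetic---you write down the explicit formula $f(x)=\max\{0,1-L\,d(x,F')\}$ and treat both halves of the $\Lp$ condition, whereas the paper merely asserts the existence of such a $g$ and leaves the second half implicit.
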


\begin{proof}
Fix $T \in \mathcal C$ and a finite subset $F \subseteq X$. For any $\epsilon>0$, there exists some $L>0$ such that $\|[T,f]\|<\epsilon$ for any $L$-Lipschitz function $f \in C_b(X)_1$. Now for any finite $G \subseteq X$ with $d(F,G^c)>1/L$, there exists an $L$-Lipschitz function $g \in C_b(X)$ with range in $[0,1]$ and satisfying $g|_F \equiv 1$ and $g|_{G^c}\equiv 0$. Hence $\|[T,g]\|< \epsilon$. Consequently,
$$\|P_FTQ_G\|=\|P_FgTQ_G\| \leq \|P_FTgQ_G\| + \|P_F[T,g]Q_G\| \leq \|[T,g]\| < \epsilon.$$
We finish the proof.
\end{proof}

\begin{proof}[Proof of Theorem \ref{main thm}, ``(1) $\Leftrightarrow$ (2)"]
Clearly, (2) implies (1). For the other direction, assume that there exists some $B \in \B(\lp)$ such that $AB=\Id+K_1$ and $BA=\Id+K_2$ for some $K_1,K_2 \in \Kp$. By Lemma \ref{class C lem1}, $A \in \Ap \subseteq \mathcal C$. We claim that $B \in \mathcal C$ as well, hence $B \in \Lp$ by Lemma \ref{class C lem2}.

Indeed for any $\epsilon>0$, there exists some $L_1>0$ such that $\|[A,f]\|<\epsilon/(3\|B\|^2)$ for any $L_1$-Lipschitz function $f \in C_b(X)_1$. On the other hand, since $K_1,K_2 \in \Kp$, there exists some finite subset $F_0 \subseteq X$ such that
\begin{equation}\label{EQ18}
\|Q_{F_0}K_1\| < \frac{\epsilon}{12\|B\|} \andx \|K_2Q_{F_0}\| < \frac{\epsilon}{12\|B\|}.
\end{equation}
Choose a point $x_0 \in X$, and take $L_2=\epsilon/[6\mathrm{diam}F_0\cdot\|B\|\cdot(\|K_1\|+\|K_2\|)]$, $L:=\min\{L_1,L_2\}$.

For any $L$-Lipschitz function $f \in C_b(X)_1$, we take $\tilde{f}=f-f(x_0)$.  Then
\begin{equation}\label{EQ11}
[f,B]=[\tilde{f},B]=(BA-K_2)\tilde{f}B-B\tilde{f}(AB-K_1)=B[A,f]B+B\tilde{f}K_1-K_2\tilde{f}B.
\end{equation}
For the above $F_0 \subseteq X$, we have
\begin{equation}\label{EQ12}
B\tilde{f}K_1=B\tilde{f}Q_{F_0}K_1+BP_{F_0}\tilde{f}K_1, \andx K_2\tilde{f}B=K_2Q_{F_0}\tilde{f}B+K_2P_{F_0}\tilde{f}B.
\end{equation}
By (\ref{EQ18}), we obtain
\begin{equation}\label{EQ13}
\|B\tilde{f}Q_{F_0}K_1\| < \epsilon/6 \andx \|K_2Q_{F_0}\tilde{f}B\| < \epsilon/6.
\end{equation}
Furthermore, since $x_0 \in F_0$, we have
\begin{equation*}
\|P_{F_0}\tilde{f}\|\leq L_2\cdot\mathrm{diam}F_0=\frac{\epsilon}{6\|B\|\cdot(\|K_1\|+\|K_2\|)},
\end{equation*}
which implies that
\begin{equation}\label{EQ14}
\|BP_{F_0}\tilde{f}K_1\|\leq \epsilon/6, \andx \|K_2P_{F_0}\tilde{f}B\| \leq \epsilon/6.
\end{equation}
Hence combining (\ref{EQ11}), (\ref{EQ12}), (\ref{EQ13}) and (\ref{EQ14}), we have:
\begin{eqnarray*}
  \|[f,B]\| &\leq & \|B[A,f]B\| + \|B\tilde{f}K_1\| + \|K_2\tilde{f}B\| \\
   &\leq & \|B\|\cdot \frac{\epsilon}{3\|B\|^2} \cdot \|B\| + \|B\tilde{f}Q_{F_0}K_1\| + \|BP_{F_0}\tilde{f}K_1\| + \|K_2Q_{F_0}\tilde{f}B\| + \|K_2P_{F_0}\tilde{f}B\| \\
   & \leq & \frac{\epsilon}{3} + \frac{\epsilon}{6} + \frac{\epsilon}{6} + \frac{\epsilon}{6} + \frac{\epsilon}{6} = \epsilon.
\end{eqnarray*}
Hence we finish the proof.
\end{proof}

From the above proof, we obtain a bonus result as follows.
\begin{cor}\label{closed under inv}
Let $p \in \Dom$, $A,B \in \B(\lp)$ and $A \in \mathcal C$. If $AB-\Id$ and $BA-\Id$ belong to $\Kp$, then $B \in \mathcal{C}$ as well. In particular, $\mathcal{C}$ is closed under taking inverses.
\end{cor}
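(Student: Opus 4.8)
\textbf{Proof proposal for Corollary \ref{closed under inv}.}
The plan is to observe that the displayed computation in the proof of ``(1) $\Leftrightarrow$ (2)'' never used anything about $A$ beyond the three facts $A \in \mathcal{C}$, $AB - \Id = K_1 \in \Kp$, and $BA - \Id = K_2 \in \Kp$; in particular the specific identities $AB = \Id + K_1$, $BA = \Id + K_2$ were the only place $A$'s band-domination entered, and those are exactly the hypotheses here. So the first step is simply to repeat verbatim the estimate: fix $\epsilon > 0$, use $A \in \mathcal{C}$ to get $L_1$ with $\|[A,f]\| < \epsilon/(3\|B\|^2)$ for $L_1$-Lipschitz $f \in C_b(X)_1$, use $K_1, K_2 \in \Kp$ to get a finite $F_0$ with the bounds in \eqref{EQ18}, pick $x_0 \in F_0$, set $L_2$ as in the original proof, and $L = \min\{L_1, L_2\}$. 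Then for any $L$-Lipschitz $f \in C_b(X)_1$ the identity \eqref{EQ11} holds (it only rearranges $AB - K_1 = \Id = BA - K_2$), the splitting \eqref{EQ12} holds for purely formal reasons, and \eqref{EQ13}, \eqref{EQ14} follow from the choice of $F_0$, $L_2$; combining gives $\|[f,B]\| \le \epsilon$. Hence $B \in \mathcal{C}$.

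The ``in particular'' clause then follows immediately: if $A \in \mathcal{C}$ is invertible in $\B(\lp)$, take $B = A^{-1}$, so $AB - \Id = BA - \Id = 0 \in \Kp$, and the first part gives $A^{-1} \in \mathcal{C}$. The honest thing to say is that there is essentially no new content and no real obstacle here — this corollary is extracted for free from the proof just given, and the only thing to be careful about is to phrase the argument so that it is transparent that nowhere did one need $B$ to be a parametrix \emph{in} $\Lp$ or $\Ap$, merely a two-sided inverse modulo $\Kp$ in $\B(\lp)$. So in practice I would not re-typeset the whole estimate but rather write a short paragraph stating ``inspecting the proof of the implication (1) $\Rightarrow$ (2) above, one sees that the only properties of $A$ used are $A \in \mathcal{C}$ and that $AB - \Id$, $BA - \Id$ lie in $\Kp$; the conclusion there was precisely $B \in \mathcal{C}$,'' and then add the one-line deduction of closedness under inverses.
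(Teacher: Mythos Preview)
Your proposal is correct and matches the paper's approach exactly: the paper introduces this corollary with the sentence ``From the above proof, we obtain a bonus result as follows'' and gives no separate argument, so the intended proof is precisely your observation that the estimate in the proof of ``(1) $\Leftrightarrow$ (2)'' used only $A \in \mathcal{C}$ and $AB-\Id, BA-\Id \in \Kp$. Your handling of the ``in particular'' clause by specialising to $B=A^{-1}$ with $K_1=K_2=0$ is also exactly what is intended.
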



\subsection{``(2) $\Rightarrow$ (4)" for finite $p$}\label{2 to 4 finite}
We move on to prove ``(2) $\Rightarrow$ (4)" without the assumption of Property A. First, in this subsection, we deal with the case of finite $p$. The idea follows partially from \cite{vspakula2017metric} together with the commutant technique developed above. However, things become complicated when $p=\infty$ due to lack of characterisation of the dual space of $\linf$. Hence we leave it to the next subsection after new tools are introduced.

Let us start with the following lemma. The proof is almost the same as that of \cite[Lemma 5.3]{vspakula2017metric}, together with Theorem \ref{density thm} and Proposition \ref{limit op homo}, hence omitted.

\begin{lem}\label{norm app}
For $p \in \Dom$, let $A$ be a band-dominated operator on $\lp$ rich at $\omega \in \partial X$. For any finitely supported unit vector $v \in \ell^p_E(X(\omega))$, finite subset $G \subseteq X$ and $\epsilon>0$, there exists a unit vector $w \in \lp$ such that
$$\big| \|Aw\|-\|\Phi_\omega(A)v\| \big| < \epsilon$$
and $\supp(w) \cap G = \emptyset$.
\end{lem}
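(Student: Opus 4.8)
\textbf{Proof proposal for Lemma \ref{norm app}.}

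The plan is to adapt the argument of \cite[Lemma 5.3]{vspakula2017metric} to all $p \in \Dom$, using the density theorem (Theorem \ref{density thm}) to reduce to band operators and the local isometry result (Proposition \ref{local isom op}) to transport the vector $v$ from a finite piece of the limit space $X(\omega)$ into $X$ far away from $G$. First I would handle the case of a \emph{band} operator $A$, say of propagation at most $r$. Let $F = \supp(v) \subseteq X(\omega)$ and enlarge it to $F' = \Nd_r(F)$ so that $\Phi_\omega(A)$ cannot ``see'' outside $F'$ when applied to $v$; concretely, $\|\Phi_\omega(A)v\| = \|P_{F'}\Phi_\omega(A)P_F v\|$ since $\Phi_\omega(A)$ is a band operator of propagation at most $r$ by Proposition \ref{limit op homo}(2). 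Now apply Proposition \ref{local isom op} to $F'$ and $\epsilon$ to get a local coordinate system $\{f_y : F' \to G(y)\}_{y \in Y}$ with $\omega(Y)=1$ and isometries $U_y$ satisfying $\|U_y^{-1} P_{G(y)} A P_{G(y)} U_y - P_{F'}\Phi_\omega(A)P_{F'}\| < \epsilon$. Since $\omega(Y)=1$ and $G$ is finite while $X$ has bounded geometry, the set $\{y \in Y : d(y,G) > r + \diam(F')\}$ still has $\omega$-measure one, hence is nonempty; pick such a $y_0$ and set $w := U_{y_0}(P_F v) \in \ell^p_E(G(y_0)) \subseteq \lp$, extended by zero. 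Then $w$ is a unit vector with $\supp(w) \subseteq G(y_0)$ disjoint from $G$, and
$$\big| \|Aw\| - \|\Phi_\omega(A)v\| \big| = \big| \|P_{G(y_0)} A P_{G(y_0)} w\| - \|P_{F'}\Phi_\omega(A)P_F v\| \big| \leq \epsilon,$$
where the first equality uses that $A$ has propagation $\leq r$ and $d(G(y_0), X\setminus G(y_0)) $ is controlled, and the inequality uses the reverse triangle inequality together with the operator-norm estimate from Proposition \ref{local isom op} applied to the unit vector $U_{y_0}^{-1}(\cdot)$.

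For a general band-dominated operator $A$ rich at $\omega$, I would use Theorem \ref{density thm} to pick a band operator $A_0$ rich at $\omega$ with $\|A - A_0\| < \epsilon/3$; then $\|\Phi_\omega(A) - \Phi_\omega(A_0)\| < \epsilon/3$ by Proposition \ref{limit op homo}(1). Apply the band-operator case to $A_0$, $v$, $G$ and $\epsilon/3$ to obtain a unit vector $w$ with $\supp(w) \cap G = \emptyset$ and $\big| \|A_0 w\| - \|\Phi_\omega(A_0) v\| \big| < \epsilon/3$. A triangle-inequality chain then gives $\big| \|Aw\| - \|\Phi_\omega(A)v\| \big| < \epsilon$, completing the proof for all $p \in \Dom$ (the argument never uses density of finitely supported vectors, only that $v$ itself is finitely supported, so $p=\infty$ causes no trouble here).

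The main obstacle I anticipate is purely bookkeeping: making sure the radius inflations ($F \to F' = \Nd_r(F)$, and the distance threshold $d(y_0, G) > r + \diam F'$) are chosen so that both $\|Aw\|$ and $\|\Phi_\omega(A)v\|$ genuinely localize to the relevant finite blocks, and that the isometry $U_{y_0}$ intertwines the block cutdowns exactly as in Proposition \ref{local isom op} — in particular checking $A w$ is supported inside $G(y_0)$ because $w$ is supported in the ``interior'' $f_{y_0}(F)$ and $A$ has propagation $\leq r$ while $G(y_0) \supseteq \Nd_r(f_{y_0}(F))$ when $F'$ was taken to be the $r$-neighbourhood of $F$. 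None of this is deep, which is why \cite{vspakula2017metric} and the present paper omit it; the only genuinely new point over \cite[Lemma 5.3]{vspakula2017metric} is invoking Theorem \ref{density thm} and Proposition \ref{limit op homo} in place of their $p\in(1,\infty)$ counterparts, and observing that these now hold for $p \in \{0,1,\infty\}$.
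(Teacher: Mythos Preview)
Your proposal is correct and follows precisely the route the paper indicates: it adapts \cite[Lemma 5.3]{vspakula2017metric} by invoking Theorem \ref{density thm} to reduce to rich band operators and Proposition \ref{limit op homo} for the norm and propagation control of $\Phi_\omega$, then transports $v$ via the local coordinates of Proposition \ref{local isom op}. The bookkeeping caveat you flag (choosing $F'$ as a ball so that $G(y_0)$ is a ball and hence genuinely contains $\Nd_r(f_{y_0}(F))$) is the only point requiring care, and you have identified it correctly.
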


\begin{prop}\label{bdd below}
Let $p \in \Dom$, and $A$ be a rich band-dominated operator on $\lp$ which is invertible at infinity. Then the operator spectrum $\sigma_{op}(A)$ is \emph{uniformly bounded below}, i.e., there exists some $M>0$ such that for any $\omega \in \partial X$ and $v \in \ell^p_E(X(\omega))$,
$$\|\Phi_\omega(A)v\| \geq M\|v\|. $$
\end{prop}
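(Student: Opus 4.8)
\textbf{Proof proposal for Proposition \ref{bdd below}.}

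The plan is to argue by contradiction, exploiting the fact that invertibility at infinity gives us an (honest, though possibly only bounded) inverse $B$ modulo $\Kp$, and then transferring the quantitative lower bound $\|Av\| \gtrsim \|v\|$ for vectors supported far out to the limit operators via Lemma \ref{norm app}. First I would establish the ``far-out'' lower bound for $A$ itself: since $A$ is invertible at infinity, there is $B \in \B(\lp)$ with $BA = \Id + K$ for some $K \in \Kp$. Because $\lim_{F \in \F}\|KQ_F\| = 0$, there is a finite set $F_0 \subseteq X$ with $\|KQ_{F_0}\| < 1/2$. Then for any $w \in \lp$ with $\supp(w) \cap F_0 = \emptyset$ we have $w = Q_{F_0}w$, so $\|w\| = \|BAw - Kw\| \leq \|B\|\,\|Aw\| + \|KQ_{F_0}\|\,\|w\| \leq \|B\|\,\|Aw\| + \tfrac12\|w\|$, whence $\|Aw\| \geq \frac{1}{2\|B\|}\|w\|$. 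Set $M := \frac{1}{4\|B\|}$ (the extra factor of $2$ absorbs the $\epsilon$ below).

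Next I would fix $\omega \in \partial X$ and show $\|\Phi_\omega(A)v\| \geq M\|v\|$ for all $v \in \ell^p_E(X(\omega))$. By Corollary \ref{bd for 01infty} (and Corollary \ref{p finite} for $p \in \dom$), $\Phi_\omega(A)$ is a genuine bounded operator on $\ell^p_E(X(\omega))$, so by continuity and homogeneity it suffices to treat finitely supported unit vectors $v$. Given such a $v$ and any $\epsilon > 0$ (say $\epsilon = M$), apply Lemma \ref{norm app} with $G = F_0$: there is a unit vector $w \in \lp$ with $\supp(w) \cap F_0 = \emptyset$ and $\big|\|Aw\| - \|\Phi_\omega(A)v\|\big| < \epsilon$. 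Combining with the far-out bound, $\|\Phi_\omega(A)v\| > \|Aw\| - \epsilon \geq \frac{1}{2\|B\|} - \epsilon = 2M - M = M = M\|v\|$. Since $M$ does not depend on $\omega$, this is exactly the claimed uniform lower bound.

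There is one subtlety to handle carefully when $p = \infty$: Lemma \ref{norm app} is stated for $p \in \Dom$, so one must make sure its proof (which the excerpt attributes to the analogue of \cite[Lemma 5.3]{vspakula2017metric}, using Theorem \ref{density thm} and Proposition \ref{limit op homo}) genuinely covers $p = \infty$ — in particular that $\Phi_\omega(A)$ is realised as an operator on $\ell^\infty_E(X(\omega))$, which is precisely what Proposition \ref{impl of limit} provides. Assuming that, the argument above is uniform in $p$. The only real content is the contradiction-free direct estimate in the first paragraph; the main obstacle, such as it is, lies not here but in having Lemma \ref{norm app} available for $p = \infty$, and in the restriction to finitely supported $v$ being harmless — which it is, because finitely supported vectors are norm-dense in $\ell^0_E$ and $\ell^1_E$, and for $\ell^\infty_E$ one uses that $\Phi_\omega(A)$ is sequentially continuous (lies in $\mathcal{S}^\infty_E(X(\omega))$) together with $\|\Phi_\omega(A)\| = \sup_{F}\|P_F\Phi_\omega(A)P_F\|$ from Lemma \ref{op to matrix coeff} to reduce the lower bound on all of $\ell^\infty_E(X(\omega))$ to its finitely supported unit vectors.
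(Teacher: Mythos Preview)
Your argument for finitely supported $v$ is essentially identical to the paper's (you fix $F_0$ once and get $M = 1/(4\|B\|)$, while the paper lets $\epsilon \to 0$ and obtains the sharper $1/\|B\|$, but this is cosmetic). For $p \in \dom$ the passage to arbitrary $v$ by density is also the same as in the paper.

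The genuine gap is the $p=\infty$ step. You assert that sequential continuity of $\Phi_\omega(A)$ together with the identity $\|\Phi_\omega(A)\| = \sup_F \|P_F\Phi_\omega(A)P_F\|$ from Lemma \ref{op to matrix coeff} lets you reduce the \emph{lower} bound to finitely supported vectors. But that identity concerns the operator norm (a supremum), not the lower norm (an infimum); there is no analogous statement $\nu(T) = \inf_F \nu(P_FTP_F)$, and sequential continuity says nothing about this either. Concretely, for a general unit $v\in\ell^\infty_E(X(\omega))$ the truncations $P_Fv$ do not converge to $v$ in norm, so the inequality $\|\Phi_\omega(A)P_Fv\|\geq M\|P_Fv\|$ does not pass to the limit, and a sharp cutoff $P_F$ may shift where $\|\Phi_\omega(A)P_Fv\|_\infty$ is attained to the boundary of $F$, where it need not agree with $\Phi_\omega(A)v$.

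The paper fills exactly this gap with the commutant technique: since $\Phi_\omega(A)\in\mathcal{A}^\infty_E(X(\omega))\subseteq\mathcal{C}$ (Proposition \ref{limit op homo}(4) and Lemma \ref{class C lem1}), for any $\epsilon>0$ there is an $L$ so that $\|[\Phi_\omega(A),f]\|<\epsilon$ for every $L$-Lipschitz $f\in C_b(X(\omega))_1$. Given a unit $v$, pick $\alpha$ with $\|v(\alpha)\|_E>1-\epsilon$ and an $L$-Lipschitz bump $f$ with $f(\alpha)=1$ and bounded support; then $fv$ is finitely supported with $\|fv\|_\infty>1-\epsilon$, and
\[
\|\Phi_\omega(A)v\|_\infty \geq \|f\Phi_\omega(A)v\|_\infty \geq \|\Phi_\omega(A)(fv)\|_\infty - \|[\Phi_\omega(A),f]\|\,\|v\|_\infty \geq M(1-\epsilon)-\epsilon.
\]
Letting $\epsilon\to 0$ gives the bound for all $v$. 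So the reduction you want is true, but it requires the Lipschitz-cutoff/commutator argument, not Lemma \ref{op to matrix coeff}.
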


\begin{proof}
Since $A$ is invertible at infinity, there exists a bounded operator $B$ on $\lp$ such that $K_1:=AB-\Id$ and $K_2:=BA-\Id$ are in $\Kp$. We claim that for any $\omega \in \partial X$ and finitely supported $v\in \ell^p_E(X(\omega))$,
\begin{equation}\label{EQ16}
\|\Phi_\omega(A)v\| \geq \frac{\|v\|}{\|B\|}.
\end{equation}

To prove the claim, we fix a $\omega \in \partial X$, a finitely supported $v\in \ell^p_E(X(\omega))$ and an $\epsilon>0$. Since $K_2\in \Kp$, there exists some finite $G\subseteq X$ such that $\|K_2Q_G\|<\epsilon$. By Lemma \ref{norm app}, there exists a unit vector $w \in \lp$ such that
$$\big| \|Aw\|-\|\Phi_\omega(A)v\| \big| < \epsilon$$
and $\supp(w) \cap G = \emptyset$. Hence we have $\|K_2w\|< \epsilon$, and
$$\|B\|\cdot \|Aw\| \geq \|BAw\| = \|(1-K_2)w\| \geq \|w\|-\|K_2w\|\geq 1-\epsilon.$$
Combining them together, we have
$$\|\Phi_\omega(A)v\| \geq \|Aw\|-\epsilon \geq \frac{1-\epsilon}{\|B\|} -\epsilon.$$
Letting $\epsilon \to 0$, we obtain (\ref{EQ16}) as required and the claim holds.

Now for $p<\infty$, (\ref{EQ16}) holds for all vectors in $\ell^p_E(X(\omega))$ since finitely supported vectors are dense in $\ell^p_E(X(\omega))$.

For $p=\infty$, we fix a vector $v \in \ell^\infty_E(X(\omega))$ and an $\epsilon>0$. There exists some $\alpha \in X(\omega)$ such that $\|v(\alpha)\|_E>\|v\|_\infty-\epsilon$. Applying Proposition \ref{limit op homo} and Lemma \ref{class C lem1}, $\Phi_\omega(A) \in \mathcal{A}^\infty_E(X(\omega)) \subseteq \mathcal C$. Hence there exists some $L>0$ such that for any $L$-Lipschitz function $f \in C_b(X(\omega))_1$, we have $\|[\Phi_\omega(A),f]\|<\epsilon$. Take an $L$-Lipschitz function $f$ with range in $[0,1]$ and supported in the $1/L$-neighbourhood of $\alpha$ with $f(\alpha)=1$. Note that $fv$ is finitely supported, hence (\ref{EQ16}) holds for $fv$. Furthermore, $$\|fv\|_\infty \geq \|f(\alpha)v(\alpha)\|_E \geq \|v\|_\infty-\epsilon.$$
Combining them together, we have
\begin{eqnarray*}
\|\Phi_\omega(A)v\|_\infty &\geq& \|f\Phi_\omega(A)v\|_\infty \geq \|\Phi_\omega(A)fv\|_\infty - \|[\Phi_\omega(A),f]v\|_\infty \\[0.2cm]
& \geq & \frac{\|fv\|_\infty}{\|B\|} - \epsilon\|v\|_\infty \geq \frac{\|v\|_\infty-\epsilon}{\|B\|} - \epsilon\|v\|_\infty.
\end{eqnarray*}
Taking $\epsilon \to 0$, (\ref{EQ16}) holds for any $v\in \ell^\infty_E(X(\omega))$ as required. We finish the proof.
\end{proof}

We need the following auxiliary lemma concerning adjoints of limit operators. The proof is straightforward, hence omitted. Notice that for $p=0$, we set $q=1$ as its \emph{conjugate exponent}; and for $p\in [1,\infty)$, we set its \emph{conjugate exponent} to be $q \in (1,\infty]$ satisfying $1/p+1/q=1$.

\begin{lem}\label{adj limit op}
Let $p\in \dom$ and $q$ be the conjugate exponent. Let $A$ be a band-dominated operator on $\lp$, rich at $\omega$. Then $A^*$ is a band-dominated operator on $\ell_{E^*}^q(X)$, rich at $\omega$. Furthermore, we have
$$\Phi_\omega(A)^*=\Phi_\omega(A^*),$$
which implies that if $A$ is rich, then
$$\sigma_{op}(A^*)=\{B^*: B \in \sigma_{op}(A)\}.$$
\end{lem}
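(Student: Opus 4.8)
\textbf{Proof proposal for Lemma \ref{adj limit op}.}

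The plan is to verify the three assertions in order, reducing everything to the level of matrix coefficients. First I would record the basic identity relating matrix coefficients of an operator and its adjoint. For a band operator $A$ on $\lp$ with entries $A_{xy} \in \B(E)$, the transpose matrix $[A_{yx}^*]_{x,y\in X}$ (with $A_{yx}^* \in \B(E^*)$ the Banach-space adjoint) has finite propagation equal to $\ppg(A)$ and uniformly bounded entries, so by the Corollary after Lemma \ref{dec of BO} it defines a band operator on $\ell^q_{E^*}(X)$; a direct computation on finitely supported vectors shows this operator is exactly $A^*$, hence $(A^*)_{xy} = (A_{yx})^*$ and $\ppg(A^*)=\ppg(A)$. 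Passing to norm limits, $A \in \Ap$ implies $A^* \in \mathcal{A}^q_{E^*}(X)$, and the matrix-coefficient identity $(A^*)_{xy}=(A_{yx})^*$ persists since taking $xy$-coefficients is norm-continuous.

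Next I would handle richness. Fix a non-principal ultrafilter $\omega$ and two partial translations $t,s$ compatible with $\omega$. By the identity above,
$$(A^*)_{t(x)s(x)} = \big(A_{s(x)t(x)}\big)^*.$$
Since $A$ is rich at $\omega$, the limit $\lim_{x\to\omega} A_{s(x)t(x)}$ exists in norm in $\B(E)$ (applying richness to the pair $s,t$). Because the adjoint map $\B(E)\to\B(E^*)$, $S\mapsto S^*$, is an isometry and hence norm-continuous, $\lim_{x\to\omega}(A^*)_{t(x)s(x)}$ exists in $\B(E^*)$. As $t,s$ were an arbitrary compatible pair, $A^*$ is rich at $\omega$; and if $A$ is rich, this holds for all $\omega\in\partial X$, so $A^*$ is rich.

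Finally, for the formula $\Phi_\omega(A)^* = \Phi_\omega(A^*)$, I would fix a compatible family $\{t_\alpha\}_{\alpha\in X(\omega)}$ and compare matrix coefficients over $X(\omega)$, which suffices since for $p\in\dom$ operators on $\ell^p$-type spaces are determined by their matrix coefficients (Section \ref{matrix coeff}), and likewise on $\ell^q_{E^*}(X(\omega))$ — note $q\in(1,\infty]$, so when $q=\infty$ one uses instead that $\Phi_\omega(A^*)$ and $\Phi_\omega(A)^*$ both lie in $\mathcal{S}^\infty_{E^*}(X(\omega))$ via Corollary \ref{bd for 01infty} and Lemma \ref{op to matrix coeff}. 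For $\alpha,\beta\in X(\omega)$,
$$\big(\Phi_\omega(A)^*\big)_{\alpha\beta} = \big(\Phi_\omega(A)_{\beta\alpha}\big)^* = \Big(\lim_{x\to\omega} A_{t_\beta(x)t_\alpha(x)}\Big)^* = \lim_{x\to\omega}\big(A_{t_\beta(x)t_\alpha(x)}\big)^* = \lim_{x\to\omega}(A^*)_{t_\alpha(x)t_\beta(x)} = \Phi_\omega(A^*)_{\alpha\beta},$$
using the first-displayed identity, norm-continuity of adjunction, and the definition of the limit operator. Hence the two operators agree. The statement about operator spectra is then immediate: if $A$ is rich, $\sigma_{op}(A^*)=\{\Phi_\omega(A^*):\omega\in\partial X\}=\{\Phi_\omega(A)^*:\omega\in\partial X\}=\{B^*:B\in\sigma_{op}(A)\}$. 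The only mildly delicate point is the case $q=\infty$ (i.e. $p=1$), where uniqueness of an operator given its matrix coefficients is not automatic and one must route through $\mathcal{S}^\infty_{E^*}$ and Lemma \ref{op to matrix coeff}; everything else is a routine transcription.
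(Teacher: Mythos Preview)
Your proof is correct and follows the natural approach; the paper itself omits the proof as ``straightforward'', so there is nothing substantive to compare. One minor slip: you write ``note $q\in(1,\infty]$'', but for $p=0$ the conjugate exponent is $q=1$ (as the paper specifies just before this lemma), so the range should be $[1,\infty]$. This does not affect your argument, since $q=1$ is covered by the finite-$q$ case where matrix coefficients determine operators uniquely; only $q=\infty$ (i.e.\ $p=1$) requires the detour through $\mathcal{S}^\infty_{E^*}$ that you correctly supply.
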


\begin{proof}[Proof of Theorem \ref{main thm}, ``(2) $\Rightarrow$ (4), $p<\infty$"]
Let $A$ be a $\P$-Fredholm rich operator in $\Ap$ for $p<\infty$, then trivially $A$ is invertible at infinity. By Proposition \ref{bdd below}, the operator spectrum $\sigma_{op}(A)$ is uniformly bounded below.

On the other hand, since $p\in \dom$, we take $q \in [1,\infty]$ to be its conjugate exponent. Consider the adjoint $A^*$, which is a rich band-dominated operator on $\ell_{E^*}^q(X)$ by Lemma \ref{adj limit op}. Since $A$ is invertible at infinity, there exists a bounded operator $B$ on $\lp$ such that $K_1:=AB-\Id$ and $K_2:=BA-\Id$ are in $\Kp$, which implies that $K_1^*=B^*A^*-\Id$ and $K_2^*=A^*B^*-\Id$ are in $\mathcal{K}^q_{E^*}(X)$. Hence, $A^*$ is invertible at infinity as well. Applying Proposition $\ref{bdd below}$ to $A^*$, the operator spectrum $\sigma_{op}(A^*)$ is uniformly bounded below, which implies $\{\Phi_\omega(A)^*\}_{\omega \in \partial X}$ is uniformly bounded below by Lemma \ref{adj limit op}. Therefore, condition (4) holds and we finish the proof.
\end{proof}

\begin{rem}
The above proof does not work for $p=\infty$, since the dual of $\linf$ is no longer an $\ell^p$-type space. Hence we cannot refer to Proposition \ref{bdd below} any more. New techniques are required, which are introduced in the next subsection.
\end{rem}

\subsection{Dual space arguments for $p=\infty$}\label{dual space}
Now we focus on the case of $p=\infty$ and finish the proof of ``(2) $\Rightarrow$ (4)" completely. The key ingredient here is the dual space argument, which showed its power in the classical limit operator theory. The idea is that properties of operators on $\ell^\infty$-spaces can be characterised via those of its ``\emph{double predual}" on $c_0$-spaces.

Recall that from Lemma \ref{Y0}, $\lz=\linfz$ is the closure of $\bigcup_{F \in \F}P_F(\linf)$ in $\linf$. For an operator $A \in \Linf$, $\lz$ is $A$-invariant by Lemma \ref{inv}. Denote its restriction $A|_{\lz}$ by $A_0 \in \B(\lz)$. We have the following elementary observation.
\begin{lem}\label{restriction}
For $A \in \Linf, \Ainf$ or $\Kinf$, its restriction $A_0 \in \Lz, \Az$ or $\Kz$ respectively, and $\|A\|=\|A_0\|$. Furthermore, if $A \in \Ainf$ is rich at $\omega\in \partial X$, so is $A_0$. And we have
$$\Phi_\omega(A)\big|_{\ell^0_E(X(\omega))} = \Phi_\omega(A_0).$$
\end{lem}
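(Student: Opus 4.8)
The strategy is to verify the three assertions in turn, each by unwinding definitions and using the already-established facts that $\lz = \linfz$ is $A$-invariant for $A \in \Linf$ (Lemma \ref{inv}) together with the norm formula $\|A\| = \sup_{F \in \F}\|P_F A P_F\|$ from Lemma \ref{op to matrix coeff}. First I would record that the restriction map $A \mapsto A_0$ is a well-defined algebra homomorphism $\Linf \to \B(\lz)$: well-definedness is exactly Lemma \ref{inv}, and it is clearly linear and multiplicative. For the norm equality $\|A\| = \|A_0\|$: the inequality $\|A_0\| \le \|A\|$ is trivial, and since each $P_F(\linf)$ is contained in $\lz$, we get $\|P_F A P_F\| = \|P_F A_0 P_F\| \le \|A_0\|$ for every $F \in \F$; taking the supremum over $F$ and invoking Lemma \ref{op to matrix coeff} gives $\|A\| \le \|A_0\|$. (Strictly, Lemma \ref{op to matrix coeff} is stated for $\Sinf$, but $\Linf \subseteq \Sinf$, so it applies.)

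Next I would check that the three subclasses are respected. For $A \in \Linf$: the operators $P_F, Q_F$ on $\linf$ restrict to the corresponding operators on $\lz$ (they are multiplication operators and $\lz$ is invariant under all of $\BD$), so $P_{F'} A Q_F$ restricts to $P_{F'} A_0 Q_F$, and since these have equal norms by the argument above (or directly, since restriction is norm-nonincreasing and the relevant vectors lie in $\lz$ — note $Q_F(\lz) \subseteq \lz$), the defining limit conditions for $\Linf$ transfer verbatim to $\Lz$; similarly for $Q_F A_0 P_{F'}$. The same bookkeeping handles $\Kinf \to \Kz$. For $\Ainf \to \Az$: if $A_n \in \BD$ with $A_n \to A$ in $\B(\linf)$, then $(A_n)_0 \in \mathbb{C}[X;E]$ acting on $\lz$, and $\|(A_n)_0 - A_0\| \le \|A_n - A\| \to 0$, so $A_0 \in \Az$. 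A small point to confirm: a band operator on $\linf$ restricts to a band operator on $\lz$ with the same matrix coefficients and propagation, which is immediate from the explicit matrix description.

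Finally, for the statement about limit operators: suppose $A \in \Ainf$ is rich at $\omega$. Richness is a condition purely on the matrix coefficients $A_{xy} = R_x A E_y$, and since $E_y$ has range in $\bigcup_F P_F(\linf) \subseteq \lz$, we have $(A_0)_{xy} = A_{xy}$ for all $x, y$; hence $A_0$ is rich at $\omega$ and $\Phi_\omega(A_0)_{\alpha\beta} = \lim_{x \to \omega} (A_0)_{t_\alpha(x) t_\beta(x)} = \lim_{x \to \omega} A_{t_\alpha(x) t_\beta(x)} = \Phi_\omega(A)_{\alpha\beta}$ for all $\alpha, \beta \in X(\omega)$. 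By Proposition \ref{impl of limit}, $\Phi_\omega(A)$ is realised as an operator in $\Sinf(X(\omega))$ (in fact $\Ainf(X(\omega))$), and by Corollary \ref{p finite} applied to $p = 0$, $\Phi_\omega(A_0)$ is realised as a bounded operator on $\ell^0_E(X(\omega))$; both are determined by the same matrix coefficients, so $\Phi_\omega(A_0) = \Phi_\omega(A)|_{\ell^0_E(X(\omega))}$ by the uniqueness clause of Proposition \ref{impl of limit} (equivalently, because $\ell^0_E(X(\omega)) = \ell^\infty_{E,0}(X(\omega))$ is $\Phi_\omega(A)$-invariant by Lemma \ref{inv}, and an operator on $\ell^0$ is determined by its matrix coefficients).

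The only genuinely non-routine point is the last identification: one must be careful that "the operator $\Phi_\omega(A)$ restricted to $\ell^0_E(X(\omega))$" and "the operator $\Phi_\omega(A_0)$ built from $A_0$ via Corollary \ref{p finite}" literally coincide, rather than merely agree on finitely supported vectors. This is handled by the invariance of $\ell^0_E(X(\omega))$ under $\Phi_\omega(A)$ (Lemma \ref{inv}, since $\Phi_\omega(A) \in \Ainf(X(\omega)) \subseteq \Linf(X(\omega))$) together with the density of finitely supported vectors in $\ell^0_E(X(\omega))$, on which the two operators agree because their matrix coefficients agree. Everything else is a routine diagram-chase through the definitions.
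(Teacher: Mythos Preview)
Your proposal is correct and careful. The paper itself omits the proof entirely, labelling the lemma an ``elementary observation'', so there is no argument to compare against; your write-up supplies exactly the routine verification one would expect, and correctly isolates the one point requiring a sentence of justification (that the two realisations of the limit operator on $\ell^0_E(X(\omega))$ coincide, handled via invariance and density).
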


The following proposition is taken from \cite{chandler2011limit}. Although the setting there is $X=\mathbb{Z}^N$, the proof applies to any general space $X$, hence omitted.
\begin{prop}[Corollary 6.20, \cite{chandler2011limit}]\label{restriction inv}
For $A \in \Linf$, it holds that $A$ is invertible \emph{if and only if} $A_0$ is invertible. In this case, we have $A_0^{-1}=(A^{-1})|_{\lz}$.
\end{prop}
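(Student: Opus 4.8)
The plan is to prove the two implications separately, with the only external inputs being that $\Linf$ is closed under taking inverses (the remark following Lemma~\ref{ideal}, i.e.\ \cite[Proposition~1.1.9]{rabinovich2012limit}) and that every operator in $\Linf$ leaves $\lz=\linfz$ invariant (Lemma~\ref{inv}). The forward direction is short: if $A$ is invertible in $\B(\linf)$, then $A^{-1}\in\Linf$, so $A^{-1}$ also leaves $\lz$ invariant; since $A$ does too, restricting the identities $AA^{-1}=A^{-1}A=\Id$ to $\lz$ shows that $A_0=A|_{\lz}$ is invertible with inverse $(A^{-1})|_{\lz}$, which is exactly the displayed formula.

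For the reverse direction I would argue constructively. Assume $A_0\in\B(\lz)$ is invertible and put $B_0:=A_0^{-1}$. By Lemma~\ref{restriction} we have $A_0\in\Lz$, so closedness under inverses gives $B_0\in\Lz\subseteq\Sz$. The crux is to promote $B_0$ to a bounded operator $B$ on $\linf$ carrying the same matrix coefficients. The mechanism is that, taking $F'=\{x\}$ in the definition of $\Sz$, the operator norms $\|R_xB_0Q_F\|$ tend to $0$ along $\F$; comparing with the finitely supported vectors in $\lz$, this forces the unordered series $\sum_{y\in X}(B_0)_{xy}\,v(y)$ to converge in $E$ for every $v\in\linf$, with $\big\|\sum_y(B_0)_{xy}v(y)\big\|_E\le\|B_0\|\,\|v\|_\infty$. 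Hence $(Bv)(x):=\sum_y(B_0)_{xy}v(y)$ defines $B\in\B(\linf)$ with $\|B\|\le\|B_0\|$ and $B|_{\lz}=B_0$; moreover the two defining conditions of $\Lz$ for $B_0$ transfer verbatim, so in fact $B\in\Linf\subseteq\Sinf$.

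It then remains to see that $B$ inverts $A$. For $y\in X$ and $e\in E$ the vector $BE_ye$ is simply $B_0E_ye\in\lz$ (only one term contributes), so $A(BE_ye)=A_0(B_0E_ye)=E_ye$, and likewise $B(AE_ye)=B_0(A_0E_ye)=E_ye$; reading off $xy$-matrix coefficients yields $(AB)_{xy}=(BA)_{xy}=\Id_{xy}$ for all $x,y$. Since $AB$ and $BA$ lie in $\Sinf$ (it is an algebra) and have the same matrix coefficients as $\Id\in\Sinf$, Lemma~\ref{op to matrix coeff} upgrades this to $AB=BA=\Id$. Thus $A$ is invertible with $A^{-1}=B$, and $(A^{-1})|_{\lz}=B_0=A_0^{-1}$.

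The hard part is the extension step in the reverse direction: over a general Banach space $E$ a bounded operator on $\lz$ need not extend to $\linf$ through its matrix at all, so the argument genuinely relies on the membership $B_0\in\Lz$ — equivalently, on the uniform operator-norm decay of its row tails $R_xB_0Q_F$ — to get both the (unordered) convergence and the norm bound. Once that is in place, the remaining verifications are routine manipulations with matrix coefficients, for which the rigidity provided by Lemma~\ref{op to matrix coeff} (uniqueness of operators in $\Sinf$ from their matrices) is exactly what is needed. This is in substance the argument of \cite[Corollary~6.20]{chandler2011limit}, recast so as to bypass explicit biadjoints and double preduals.
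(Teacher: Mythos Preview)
Your proposal is correct. The paper itself omits the proof entirely, deferring to \cite[Corollary~6.20]{chandler2011limit}, and what you have written is indeed in substance that argument: restrict $A^{-1}$ for the forward direction, and for the converse extend $B_0:=A_0^{-1}\in\Lz$ to $\linf$ via its matrix and then invoke matrix-uniqueness in $\Sinf$.

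One remark on presentation: your extension step---building $B\in\Linf$ from $B_0\in\Lz$ through row-tail decay---is exactly the content of Lemma~\ref{ext lem} (i.e.\ \cite[Lemma~3.18]{chandler2011limit}), which the paper records a few lines later. So rather than redoing that construction inline, you could simply invoke that lemma to obtain the extension $B=\widehat{B_0}\in\Linf$, and then finish as you do: $AB$ and $BA$ lie in $\Linf\subseteq\Sinf$, agree with $\Id$ on all matrix coefficients (since they restrict to $A_0B_0=B_0A_0=\Id$ on $\lz$), hence equal $\Id$ by Lemma~\ref{op to matrix coeff}. The only point that perhaps deserves one more word is the claim that the $\Lz$ conditions ``transfer verbatim'' to give $B\in\Linf$: for $\|Q_FBP_{F'}\|$ this is immediate since $BP_{F'}=B_0P_{F'}$, while for $\|P_{F'}BQ_F\|$ one approximates $Q_Fv$ by $Q_FP_Gv\in\lz$ and passes to the limit---a one-line check, but not literally ``verbatim''.
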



Consequently, we have the following result for operator spectra under restriction.
\begin{cor}\label{res limit}
Let $A \in \Ainf$ be a rich band-dominated operator, and $A_0\in \Az$ be its restriction on $\lz$. Then:
\begin{equation}\label{EQ17}
\sigma_{op}(A_0)=\big\{\Phi_\omega(A)\big|_{\ell^0_E(X(\omega))}: \omega \in \partial X \big\}.
\end{equation}
In particular, the invertibility of all limit operators of $A_0$ with uniform boundedness of their inverses is equivalent to the same property for the limit operators of $A$.
\end{cor}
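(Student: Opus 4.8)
The plan is to read off both assertions from Lemma~\ref{restriction} and Proposition~\ref{restriction inv}, applied pointwise in $\omega \in \partial X$, together with the fact that $\Linf$ is closed under taking inverses.

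First I would verify the displayed identity \eqref{EQ17}. Since $A \in \Ainf$ is rich, it is rich at every $\omega \in \partial X$; Lemma~\ref{restriction} then says that $A_0$ is rich at each such $\omega$ and that $\Phi_\omega(A_0) = \Phi_\omega(A)\big|_{\ell^0_E(X(\omega))}$. Running over all $\omega \in \partial X$ and unwinding the definition of the operator spectrum gives $\sigma_{op}(A_0) = \{\Phi_\omega(A_0) : \omega \in \partial X\} = \{\Phi_\omega(A)|_{\ell^0_E(X(\omega))} : \omega \in \partial X\}$, which is exactly \eqref{EQ17}.

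For the ``in particular'' clause, fix $\omega \in \partial X$. By Corollary~\ref{bd for 01infty}, $\Phi_\omega(A) \in \mathcal{A}^\infty_E(X(\omega)) \subseteq \mathcal{L}^\infty_E(X(\omega))$, so Proposition~\ref{restriction inv} applies on the space $\ell^\infty_E(X(\omega))$: the operator $\Phi_\omega(A)$ is invertible if and only if its restriction $\Phi_\omega(A)\big|_{\ell^0_E(X(\omega))} = \Phi_\omega(A_0)$ is invertible, and then $\Phi_\omega(A_0)^{-1} = \big(\Phi_\omega(A)^{-1}\big)\big|_{\ell^0_E(X(\omega))}$. Since $\mathcal{L}^\infty_E(X(\omega))$ is closed under taking inverses, $\Phi_\omega(A)^{-1}$ again lies in $\mathcal{L}^\infty_E(X(\omega))$, so the norm equality in Lemma~\ref{restriction} yields $\|\Phi_\omega(A_0)^{-1}\| = \|\Phi_\omega(A)^{-1}\|$. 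Taking the supremum over $\omega \in \partial X$, every $\Phi_\omega(A_0)$ is invertible precisely when every $\Phi_\omega(A)$ is, and in that case $\sup_{\omega \in \partial X}\|\Phi_\omega(A_0)^{-1}\| = \sup_{\omega \in \partial X}\|\Phi_\omega(A)^{-1}\|$; in particular one supremum is finite exactly when the other is, which is the asserted equivalence. I do not expect a genuine obstacle here: the argument is essentially bookkeeping, assembling Lemma~\ref{restriction}, Proposition~\ref{restriction inv}, Corollary~\ref{bd for 01infty} and the inversion-closedness of $\Linf$. The only point that needs a moment's attention is checking that both $\Phi_\omega(A)$ and its inverse lie in $\mathcal{L}^\infty_E(X(\omega))$, so that the restriction-to-$c_0$ machinery of Section~\ref{dual space} applies and is norm-preserving.
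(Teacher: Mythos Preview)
Your argument is correct and essentially matches the paper's proof: both derive \eqref{EQ17} from Lemma~\ref{restriction}, then combine Proposition~\ref{restriction inv} with the fact that $\Phi_\omega(A)^{-1}\in\mathcal{L}^\infty_E(X(\omega))$ so that Lemma~\ref{restriction} gives the norm equality. The only cosmetic difference is that you invoke the inversion-closedness of $\Linf$ (the Remark after Lemma~\ref{ideal}, citing \cite[Proposition 1.1.9]{rabinovich2012limit}) and Corollary~\ref{bd for 01infty}, whereas the paper cites Proposition~\ref{limit op homo} and Corollary~\ref{closed under inv} for the same conclusions; either route is fine.
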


\begin{proof}
The proof of (\ref{EQ17}) follows from Lemma \ref{restriction}. For the second statement, we know that $\Phi_\omega(A) \in \mathcal{A}_E^\infty(X(\omega))$ for all $\omega \in \partial X$ by Proposition \ref{limit op homo}. Hence by Proposition \ref{restriction inv}, $\Phi_\omega(A)$ is invertible if and only if $\Phi_\omega(A)|_{\ell^0_E(X(\omega))}$ is, and we have
$$(\Phi_\omega(A)|_{\ell^0_E(X(\omega))})^{-1} = \Phi_\omega(A)^{-1}|_{\ell^0_E(X(\omega))}.$$
By Corollary \ref{closed under inv}, $\Phi_\omega(A)^{-1} \in \mathcal{L}_E^\infty(X(\omega))$. Hence by Lemma \ref{restriction} again, we have
$$\big\| (\Phi_\omega(A)|_{\ell^0_E(X(\omega))})^{-1} \big\| = \big\|\Phi_\omega(A)^{-1}|_{\ell^0_E(X(\omega))}\big\|=\|\Phi_\omega(A)^{-1}\|.$$
So we finish the proof.
\end{proof}

\begin{proof}[Proof of Theorem \ref{main thm}, ``(2) $\Rightarrow$ (4), $p = \infty$"]
Since $A$ is $\P$-Fredholm, there exists some $B \in \Linf$ such that $K_1:=AB-\Id$ and $K_2:=BA-\Id$ are in $\Kinf$. By Lemma \ref{inv}, $\lz$ is invariant under $B$. Denote its restriction on $\lz$ by $B_0$. By Lemma \ref{restriction}, $K_1|_{\lz}$ and $K_2|_{\lz}$ are in $\Kz$. Hence $A_0$ is $\P$-Fredholm as well. Since we already proved ``(2) $\Rightarrow$ (4)" for $p=0$ in Section \ref{2 to 4 finite}, all limit operators of $A_0$ are invertible and their inverses are uniformly bounded. Finally, by Corollary \ref{res limit}, the same holds for $A$. So we finish the proof.
\end{proof}

\subsection{Constructing parametrices}\label{construct para}
Now we move on to prove ``(4) $\Rightarrow$ (3)", following the ideas in \cite{lindner2006infinite} and \cite{vspakula2017metric} and using the tools we developed in Section \ref{partition of unity} instead. First we recall the following result to construct parametrices.

\begin{lem}\label{local parametrices}
Let $p \in \Dom$, and $A$ be a rich band operator on $\lp$ such that all limit operators $\Phi_\omega(A)$ are invertible, and $M:=\sup_{\omega}\|\Phi_\omega(A)^{-1}\|$ is finite. Let $\{V_i\}_{i \in I}$ be a uniformly bounded cover of $X$ with finite multiplicity. Then there exists a finite subset $K$ in $I$ such that for all $i \in I \setminus K$, there exist operators $B_i, C_i$ on $\lp$ with norm at most $M$ and satisfying
\begin{equation}\label{EQ15}
B_iAP_{V_i}=P_{V_i}=P_{V_i}AC_i.
\end{equation}
\end{lem}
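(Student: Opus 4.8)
The plan is to localize the inverses of the limit operators to obtain local parametrices on each patch $V_i$, exploiting Property A only through the fact that the cover has uniformly bounded members and finite multiplicity. First I would reduce to a statement near infinity: the collection of limit operators $\{\Phi_\omega(A)\}_{\omega\in\partial X}$ is uniformly invertible with $\sup_\omega\|\Phi_\omega(A)^{-1}\|=M<\infty$, and $A$ is a genuine (not merely band-dominated) \emph{rich} band operator, so its propagation $r$ is finite and $N:=\sup_{x}\sharp B(x,r)<\infty$. The key local-to-global device is Proposition \ref{local isom op}: for each finite $F\subseteq X(\omega)$ and each $\epsilon>0$, the compression $P_{G(y)}AP_{G(y)}$, transported by the local coordinate isometry $U_y$, is $\epsilon$-close in norm to $P_F\Phi_\omega(A)P_F$ for $\omega$-many $y$. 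Since $\Phi_\omega(A)$ is invertible with inverse of norm $\le M$ and $A$ has propagation $\le r$, the compressions $P_F\Phi_\omega(A)P_F$ for $F=B_{X(\omega)}(\alpha,r)$ are bounded below by something like $(2M)^{-1}$ once the error $\epsilon$ is small (a standard compression-invertibility estimate: finite compressions of an invertible band operator are eventually bounded below, uniformly, with a bound depending only on $M$, $r$ and $N$ — this is exactly the content used in \cite{lindner2006infinite}).

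The argument proper would go by contradiction. Suppose no finite $K\subseteq I$ works; then there is a sequence $i_1,i_2,\ldots$ of distinct indices in $I$ and points $x_n\in V_{i_n}$ such that on each $V_{i_n}$ the compression $P_{V_{i_n}}AP_{\Nd_r(V_{i_n})}$ (or the relevant one-sided compression) fails to admit a left/right inverse of norm $\le M$. Because the $V_i$ are uniformly bounded, say of diameter $\le S$, and the $i_n$ are distinct while the cover has finite multiplicity, the points $x_n$ necessarily leave every finite set, so we may pass to a subsequence converging to some $\omega\in\partial X$; choosing an ultrafilter refining this subsequence, $\omega\in\partial X$. Now apply Proposition \ref{local isom op} with $F$ a ball of radius $r+S$ around $\omega$ in $X(\omega)$: for $\omega$-many $y$ — hence for infinitely many $n$ among our subsequence, since the defining set has full $\omega$-measure — the transported compression $U_y^{-1}P_{G(y)}AP_{G(y)}U_y$ is as close as we like to $P_F\Phi_\omega(A)P_F$, which is uniformly boundedly invertible. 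Pulling the local inverse back through $U_y$ and cutting down by $P_{V_{i_n}}$ (note $V_{i_n}\subseteq G(y)$ once $G(y)$ is the ball of radius $r+S$) produces operators $B_{i_n}, C_{i_n}$ of norm $\le M$ (after absorbing $\epsilon$ into a slightly worse but still uniform bound, or by taking $M$ replaced by $M+1$ as is typical in these statements) satisfying \eqref{EQ15}, contradicting the choice of the $i_n$.

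The main obstacle I anticipate is the \emph{uniform} lower bound on finite compressions of the limit operators: one must show that, for the specific finite subsets $F$ that are metric balls of a fixed radius, $\|P_F\Phi_\omega(A)P_F\xi\|\ge c\|\xi\|$ for a constant $c$ depending only on $M$, $r$, $N$ (not on $\omega$ or on which ball). This requires combining invertibility of $\Phi_\omega(A)$ with its band structure — the inverse of a band operator is band-dominated, so it is itself well-approximated by compressions, and one iterates the compression estimate; this is where one genuinely uses that $A$ (hence $\Phi_\omega(A)$) is a band operator of fixed propagation rather than merely band-dominated. A secondary technical point is bookkeeping the passage from the one-block estimate on $F$ to the two-sided identities $B_iAP_{V_i}=P_{V_i}=P_{V_i}AC_i$: one produces a left inverse from the bounded-below estimate on $P_{\Nd_r(V_i)}AP_{\Nd_r(V_i)}$ and a right inverse from the analogous estimate for $A^*$ (or directly from surjectivity of the finite compression), then restricts, using that $P_{V_i}A=P_{V_i}AP_{\Nd_r(V_i)}$ because $A$ has propagation $\le r$. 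For $p=\infty$ one checks that all the operators produced are genuine bounded operators and land in the right classes — but since everything here is built from finite-rank cutdowns of $A$ and of the finite matrices $\Phi_\omega(A)^{-1}$, Lemma \ref{matrix to op for finite p} (or Lemma \ref{op to matrix coeff}) handles this without the density-of-finite-vectors issue arising.
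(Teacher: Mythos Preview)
The paper omits this proof, deferring to \cite[Lemma~6.8]{vspakula2017metric}; your strategy---contradiction via a boundary ultrafilter and transfer through the local coordinates of Proposition~\ref{local isom op}---is exactly that argument.

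One streamlining: you route through invertibility of the compression $P_F\Phi_\omega(A)P_F$ and then (rightly) worry about a uniform lower bound on such compressions. This is a detour, and a risky one, since finite compressions of invertible band operators need not be invertible (the bilateral shift is the standard counterexample). The direct line is to cut down the \emph{inverse} rather than the operator: on $X(\omega)$ the finite matrix $\widetilde B:=P_V\,\Phi_\omega(A)^{-1}\,P_{\Nd_r(V)}$ already satisfies $\widetilde B\,\Phi_\omega(A)\,P_V=P_V$ exactly (using $\Phi_\omega(A)P_V=P_{\Nd_r(V)}\Phi_\omega(A)P_V$ from finite propagation), with $\|\widetilde B\|\le M$. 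Transfer this by $U_y$ and correct the resulting $O(M\epsilon)$ defect by a Neumann series on $\ell^p_E(V_y)$; no lower-bound estimate on $P_F\Phi_\omega(A)P_F$ is required, and the ``main obstacle'' you flag dissolves. Your residual concern about landing on norm exactly $M$ rather than $M/(1-M\epsilon)$ is legitimate and is the one genuine loose end in this route---but it is immaterial for the application in Proposition~\ref{band para}, where any uniform bound on the $B_i$, $C_i$ suffices.
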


The proof is the same as that of \cite[Lemma 6.8]{vspakula2017metric}, hence omitted. Now we prove ``(4) $\Rightarrow$ (3)" first for rich \emph{band} operators.

\begin{prop}\label{band para}
Let $p \in \Dom$ and $A$ be a rich band operator. Assume that all the limit operators $\Phi_\omega(A)$ are invertible, and $M:=\sup_{\omega}\|\Phi_\omega(A)^{-1}\|$ is finite. Assume the space $X$ has Property A, then there exists an operator $B$ in $\Ap$ which is an inverse for $A$ modulo $\Kp$ and satisfying $\|B\| \leq 2M$.
\end{prop}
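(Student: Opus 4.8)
### Proof proposal for Proposition \ref{band para}

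The plan is to assemble a global parametrix $B$ from the local right-inverses $C_i$ (and left-inverses $B_i$) furnished by Lemma \ref{local parametrices}, patching them together using a metric $p$-partition of unity via the construction lemmas of Section \ref{partition of unity}. First I would fix $r = \ppg(A)$, set $N = \sup_{x} \sharp B(x,r)$, and choose a metric $p$-partition of unity $\{\phi_i\}_{i\in I}$ (for $p\in(1,\infty)$ a $q$-partition of unity, for $p\in\{0,1,\infty\}$ together with an appropriate Lipschitz dual family $\{\psi_i\}$) with $(r,\epsilon)$-variation, where $\epsilon>0$ is a small parameter to be fixed at the end in terms of $M$, $N$ and $\|A\|$. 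Using the uniformly bounded cover $\{V_i = \supp(\phi_i)\}_{i\in I}$ (enlarged slightly so that $\supp(\psi_i)\subseteq V_i$ where relevant), Lemma \ref{local parametrices} gives a finite exceptional set $K\subseteq I$ and operators $B_i, C_i$ with $\|B_i\|,\|C_i\|\le M$ satisfying \eqref{EQ15} for all $i\in I\setminus K$. For $i\in K$ we simply set $B_i = C_i = 0$, which only changes the candidate parametrix by a $\P$-compact operator since $K$ is finite and the supports are bounded.

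Next I would define the candidate right parametrix by the block construction of Lemma \ref{cstrct op} applied to $\{C_i\}$: e.g.\ $B := \sum_{i\in I\setminus K}\phi_i^{p/q} C_i \phi_i$ for $p\in(1,\infty)$, and the analogous $\sum \phi_i C_i \psi_i$ or $\sum \psi_i C_i \phi_i$ in the extreme cases, which converges (point-wise) strongly to a band-dominated — in fact band — operator of norm at most $M$, hence $B\in\Ap$. Then I would compute $AB - \Id$. The point is that $\sum_i \phi_i^{p/q}\phi_i^{?}$ (resp.\ $\sum_i \phi_i\psi_i$, etc.) reconstitutes the identity on the relevant dense subspace, so that modulo the finite-rank correction coming from $K$,
\[
AB - \Id \;=\; \sum_{i} \big(A\phi_i^{p/q}C_i\phi_i - \phi_i^{p/q}\phi_i\,\Id\big) \;=\; \sum_i \phi_i^{p/q}(AC_i - \Id)\phi_i \;+\; \sum_i [A,\phi_i^{p/q}]\,C_i\,\phi_i,
\]
and on $\supp(\phi_i)$ we have $P_{V_i}AC_i = P_{V_i}$ by \eqref{EQ15}, so the first sum vanishes (after inserting $P_{V_i}$ appropriately), while the commutator sum is controlled by Lemma \ref{cstrct op left} (or \ref{cstrct op right}) to have norm at most $\epsilon N M \|A\|$. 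Choosing $\epsilon$ small enough — say $\epsilon N M \|A\| < 1/2$ — makes $AB - \Id$ a sum of a norm-$<1/2$ operator and a $\P$-compact operator. A symmetric argument with $\{B_i\}$ produces $B'\in\Ap$ with $B'A - \Id$ of the same form. A standard Neumann-series / two-sided-inverse-up-to-ideal argument (using that $\Kp$ is a two-sided ideal in $\Ap$, Lemma \ref{ideal}) then upgrades $B$ to an honest inverse of $A$ modulo $\Kp$ inside $\Ap$, with $\|B\|$ bounded by $M/(1 - 1/2) = 2M$.

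The main obstacle I anticipate is bookkeeping the commutator estimate uniformly across the four cases $p\in(1,\infty)$, $p=0$, $p=1$, $p=\infty$: the block construction uses $\phi_i$, $\psi_i$ on different sides depending on $p$, and one must make sure that the side on which $P_{V_i}$ appears in \eqref{EQ15} matches the side on which the cutoff $\phi_i$ (or $\psi_i$) is applied, so that the "diagonal" terms genuinely cancel. For $p=\infty$ there is the additional subtlety that $B$ is only obtained as a point-wise strong limit, so one must check that the identities $AB-\Id = (\text{small}) + (\text{$\P$-compact})$ still hold as genuine operator identities on $\linf$ — this follows from Lemma \ref{pwc lem1} since $A$ is a band operator, but it needs to be invoked explicitly. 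Finally, turning the left parametrix $B'$ and right parametrix $B$ into a single $B$ with $\|B\|\le 2M$ requires a little care: one shows $B - B' \in \Kp$ by the usual argument $B - B' = B(AB' ) - (B A)B' + \ldots$ modulo $\Kp$, and then keeps $B$ (the right parametrix, whose norm we already bounded by $M$, improved to $2M$ after the Neumann correction) as the final operator.
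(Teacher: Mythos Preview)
Your proposal is correct and follows essentially the same strategy as the paper: patch the local parametrices from Lemma \ref{local parametrices} via the block-construction lemmas of Section \ref{partition of unity}, show that the error term is a commutator sum of norm at most $\epsilon NM\|A\|$, and then apply a Neumann-series correction to obtain a two-sided inverse modulo $\Kp$ with norm $\le 2M$. The paper makes one organisational choice you should be aware of: it computes the \emph{left} parametrix first, writing $\big(\sum_{i\in I\setminus K}\phi_iB_i\psi_i\big)A = (\Id+T_0)-\sum_{i\in K}\phi_i$ and then setting $A_L=(\Id+T_0)^{-1}\sum\phi_iB_i\psi_i$ directly, rather than building a raw $B$ and then upgrading; this sidesteps your final paragraph about reconciling $B$ and $B'$. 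Your anticipated obstacles are exactly the right ones and are handled just as you describe---in particular the $p=\infty$ identity uses Lemma \ref{pwc lem1}, and the side on which the cutoff sits must match the side of $P_{V_i}$ in \eqref{EQ15}, which is why the paper applies Lemma \ref{local parametrices} to the cover $\{\supp(\psi_i)\}$ rather than $\{\supp(\phi_i)\}$.
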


\begin{proof}
The case of $p \in (1, \infty)$ follows from \cite[Proposition 6.7]{vspakula2017metric}, hence omitted. We focus on $p\in \{0,1,\infty\}$.

Let $\epsilon=1/(2MN\|A\|)$, where $N=\sup_{x\in X} \sharp B(x,\ppg(A))$. Let $\{\phi_i\}_{i\in I}$ be a metric $1$-partition of unity with $(\ppg(A),\epsilon)$-variation, and $\{\psi_i\}_{i \in I}$ be an $\epsilon/\ppg(A)$-Lipschitz dual family for $\{\phi_i\}_{i \in I}$. Applying Lemma \ref{local parametrices} to the cover $\{\supp(\psi_i)\}_{i \in I}$ of $X$, we obtain a finite subset $K$ in $I$, and operators $B_i,C_i$ satisfying (\ref{EQ15}) for $i \in I \setminus K$. Set $P_i:=P_{\supp(\psi_i)}$. Now we divide the proof into two cases.

$\bullet ~p=0$ or $\infty$: by Lemma \ref{cstrct op}, the sum $\sum_{i \in I\setminus K} \phi_iB_i\psi_i$ converges strongly or point-wise strongly to an operator on $\lp$ with norm at most $M$. Consider
\begin{eqnarray*}
  \big( \sum_{i \in I\setminus K} \phi_iB_i\psi_i \big)A &=& \sum_{i \in I\setminus K} \phi_iB_i\psi_iA =  \sum_{i \in I\setminus K} \phi_iB_iAP_i\psi_i + \underbrace{\sum_{i \in I\setminus K} \phi_iB_i[\psi_i,A]}_{=:T_0} \\[0.2cm]
   &=& \sum_{i \in I\setminus K} \phi_i \psi_i + T_0 = (\Id + T_0) - \sum_{i \in K} \phi_i,
\end{eqnarray*}
where the first equality follows from Lemma \ref{pwc lem1} when $p=\infty$. All the series above converge strongly or point-wise strongly and by Lemma \ref{cstrct op right}, $\|T_0\| \leq 1/2$. Hence $\Id +T_0$ is invertible and $\|(\Id +T_0)^{-1}\| \leq 2$. Furthermore, $(\Id +T_0)^{-1}$ is given by a Neumann series and thus still in $\Ap$. Hence the operator
$$A_L:=(1+T_0)^{-1} \big( \sum_{i \in I\setminus K} \phi_iB_i\psi_i \big)$$
is in $\Ap$ with norm at most $2M$, satisfying $A_L\cdot A-\Id \in \Kp$. For the right inverse, set $T'_0:=\sum_{i \in I\setminus K} [A,\phi_i]C_i\psi_i$ and
$$A_R:=\big( \sum_{i \in I\setminus K} \phi_iC_i\psi_i \big)(1+T_0')^{-1}.$$
Similarly, we have that $A\cdot A_R - \Id \in \Kp$.

$\bullet ~p=1$: Setting
$$T_1:=\sum_{i \in I\setminus K} \psi_iB_i[\phi_i,A], ~~ T_1':=\sum_{i \in I\setminus K} [A,\psi_i]C_i\phi_i,$$
and
$$A_L:=(1+T_1)^{-1} \big( \sum_{i \in I\setminus K} \psi_iB_i\phi_i \big), ~~ A_R:=\big( \sum_{i \in I\setminus K} \psi_iC_i\phi_i \big)(1+T_1')^{-1}.$$
Similarly, by (1)(c) of Lemma \ref{cstrct op}, \ref{cstrct op right} and \ref{cstrct op left}, $A_L,A_R$ fulfill the requirements.
\end{proof}

Now the proof of Theorem \ref{main thm}, ``(4) $\Rightarrow$ (3)" is the same as that of \cite[Theorem 5.1, ``(2) $\Rightarrow$ (1)"]{vspakula2017metric}, replacing \cite[Proposition 6.7]{vspakula2017metric} with Proposition \ref{band para} above, hence omitted. On the other hand, notice that ``(3) $\Rightarrow$ (2)" holds trivially, hence combining with Section \ref{2 to 4 finite} and \ref{dual space}, we obtain the equivalence between (2), (3) and (4) for all $p \in \Dom$.

\subsection{Uniform boundedness}\label{unif bdd}

Finally we deal with the equivalence between (4) and (5). The case of $p \in (1,\infty)$ was proved in \cite[Theorem 7.3]{vspakula2017metric} under the assumption of Property A. Here we focus on $p \in \{0,1,\infty\}$ and show that Property A is not necessary for ``(4) $\Leftrightarrow$ (5)" in this case. Let us start with the following notion.

\begin{defn}[\cite{vspakula2017metric}]
Let $E',E"$ be Banach spaces, and $T:E' \to E"$ be a bounded linear operator. Define the \emph{lower norm} of $T$ to be
$$\nu(T):=\inf\{\|Tv\|_{E"}: v\in E',~~\|v\|_{E'}=1\}.$$
If $E'=\ell^p_E(X)$, $E"=\ell^p_E(Y)$ for spaces $X,Y$ and $s \geq 0$, we define
$$\nu_s(T):=\inf\{\|Tv\|_{E"}: v\in E',~~\|v\|_{E'}=1,~~\mathrm{diam}(\supp(v)) \leq s\}.$$
Furthermore, if $F \subseteq X$ and $A\in \B(\lp)$, denote the restriction of $A$ to $\ell^p_E(F)$ by $A|_F: \ell^p_E(F) \to \lp$.
\end{defn}

\begin{rem}
If $A$ is invertible, then $\nu(A)=1/\|A^{-1}\|$. Also, we have the estimate that $|\nu(A)-\nu(B)| \leq \|A-B\|$ for any operators $A,B$.
\end{rem}

First we deal with the case of $p=\infty$, which is clearly a corollary of the following theorem.
\begin{thm}\label{bdd below thm}
Let $X$ be a space, and $A$ be a rich band-dominated operator on $\linf$. Then there exists an operator $C \in \sigma_{op}(A)$ with $\nu(C)=\inf\{\nu(B):B \in \sigma_{op}(A)\}$.
\end{thm}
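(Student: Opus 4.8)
The plan is to realise the infimum of lower norms over the operator spectrum as a limit along a suitable ultrafilter, using a diagonal/compactness argument on the boundary $\partial X$. First I would set $m:=\inf\{\nu(B):B\in\sigma_{op}(A)\}$ and pick a sequence $\omega_n\in\partial X$ with $\nu(\Phi_{\omega_n}(A))\to m$. Since each $\Phi_{\omega_n}(A)$ is a band-dominated operator on $\ell^\infty_E(X(\omega_n))$ (Proposition \ref{limit op homo}(4)), by definition of the lower norm there is a unit vector $v_n\in\ell^\infty_E(X(\omega_n))$ with $\|\Phi_{\omega_n}(A)v_n\|$ close to $\nu(\Phi_{\omega_n}(A))$, say within $1/n$; using the quasi-locality of $\Phi_{\omega_n}(A)$ (it lies in $\mathcal{C}$ by Lemma \ref{class C lem1}) together with the argument in the proof of Proposition \ref{bdd below}, I may furthermore arrange that $v_n$ is ``almost supported near a single point'' $\alpha_n\in X(\omega_n)$, i.e.\ replacing $v_n$ by $f_n v_n$ for a suitable Lipschitz bump $f_n$ changes $\|\Phi_{\omega_n}(A)v_n\|$ by at most $1/n$. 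Via the local coordinate systems of Proposition \ref{local isom}, transport the local picture of $\Phi_{\omega_n}(A)$ near $\alpha_n$ back into $X$: choose a point $x_n\in X$ and a ball $B(x_n,r_n)$ (with $r_n\to\infty$) on which $P_{B(x_n,r_n)}AP_{B(x_n,r_n)}$ is, up to an error $\to 0$, isometric to the corresponding block of $\Phi_{\omega_n}(A)$, and let $w_n\in\ell^\infty_E(X)$ be the unit vector supported in $B(x_n,r_n)$ corresponding to (the localised) $v_n$. One may also force $d(x_n,\text{(any fixed point)})\to\infty$ by passing to a subsequence.

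Next I would let $\omega\in\partial X$ be any ultrafilter refining the filter generated by the tails $\{x_n:n\ge k\}$ (equivalently, a cluster point of $(x_n)$ in $\partial X$), and claim $C:=\Phi_\omega(A)$ works. The point is that the localised vectors $w_n$ have uniformly bounded-diameter essential support (because the bump $f_n$ has support of diameter $\le 1/(\text{Lipschitz constant})$, which I can keep bounded — here is where I must be slightly careful, choosing the Lipschitz constants so the support diameters stay bounded while the error $1/n$ still goes to $0$, which is possible since only the \emph{product} $L_n\cdot\mathrm{diam}$ is constrained). Then, because $A$ is a band operator (first) and then band-dominated (by approximation, Lemma \ref{app} / density), the quantity $\|Aw_n\|_\infty$ is governed by the matrix entries $A_{yz}$ for $y,z$ ranging over a bounded neighbourhood of the support of $w_n$, hence over a bounded neighbourhood of $x_n$; taking the limit along $\omega$ of these entries produces exactly the matrix entries of $\Phi_\omega(A)$ acting on the transported vector. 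Concretely, one shows $\nu(\Phi_\omega(A))\le \lim_{n\to\omega}\|Aw_n\|_\infty \le m$; combined with the trivial inequality $\nu(\Phi_\omega(A))\ge m$ this gives $\nu(C)=m$.

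For the band-dominated (non-band) case I would reduce to the band case by the density Theorem \ref{density thm}: write $A=\lim_k A_k$ with $A_k$ rich band operators, note $\nu$ is $1$-Lipschitz in operator norm (the Remark before the theorem) and that $\Phi_\omega$ is contractive (Proposition \ref{limit op homo}(1)), so $\nu(\Phi_\omega(A_k))\to\nu(\Phi_\omega(A))$ uniformly in $\omega$, hence $\inf_\omega\nu(\Phi_\omega(A_k))\to\inf_\omega\nu(\Phi_\omega(A))$; a routine compactness/subsequence extraction then upgrades the ``achieved infimum'' from each $A_k$ to $A$. Alternatively — and perhaps more cleanly — I would run the diagonal argument directly for band-dominated $A$, using that $\Phi_\omega(A)\in\mathcal{A}^\infty_E(X(\omega))\subseteq\mathcal{C}$ so that the quasi-locality / localisation step goes through verbatim, and using that for a band-dominated operator the norm of $Aw_n$ with $w_n$ localised is still approximately determined by a bounded window of matrix entries (up to an error controlled by how well $A$ is approximated by band operators, uniformly).

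\textbf{Main obstacle.} The delicate point is the simultaneous control, as $n\to\infty$, of three competing quantities: the error $1/n$ in the lower-norm approximation, the Lipschitz constant $L_n$ of the localising bump $f_n$, and the resulting support diameter $\sim 1/L_n$ of the localised vector. For the band-operator case the support diameter must stay \emph{bounded} (so that after transporting to $X$ the relevant matrix entries lie in a fixed-radius window, making the $\omega$-limit genuinely compute $\Phi_\omega(A)$), yet the commutator estimate $\|[\Phi_{\omega_n}(A),f_n]\|$ needs $L_n$ \emph{small}, i.e.\ support diameter \emph{large}. Resolving this tension requires exploiting that $\Phi_{\omega_n}(A)$ has propagation bounded by $\ppg(A)$ (uniformly in $n$!), so the commutator with an $L$-Lipschitz function is $O(L\cdot\ppg(A))$ and a \emph{fixed} small $L$ (hence fixed bounded support diameter) suffices for all $n$ — this uniformity in $n$ of the propagation is exactly what makes the argument work and is the crux I would want to get right. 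For general band-dominated $A$ one either invokes the density theorem as above, or notes that the tail of the band approximation contributes a uniformly small additive error to all the estimates.
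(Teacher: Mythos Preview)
Your overall plan is the right one and matches the paper's route: localise near-minimising vectors to bounded-diameter supports via Lipschitz bumps, pull back to $X$ along local coordinates, and extract a cluster point $\omega$. The paper isolates the first step as a standalone ``lower norm localisation'' lemma (Proposition~\ref{local norm} and Corollary~\ref{nu local}): for each $\delta>0$ there is $s=s(\delta,M,r)$ with $\nu_s(B|_F)\le\nu(B|_F)+\delta$ uniformly over all $B$ with $\ppg(B)\le r$ and $\|B\|\le M$, and over all $F$. This is exactly your bump argument, and the reason it works for $p=\infty$ without Property~A is the elementary inequality $\|fv\|_\infty\ge f(x_0)\|v(x_0)\|_E$, which lets one localise without losing the norm of the test vector.

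However, your resolution of the ``main obstacle'' does not close the argument. A \emph{fixed} small $L$ gives, after transporting and passing to the cluster point $\omega$, only $\nu(\Phi_\omega(A))\le m+O(L\cdot\ppg(A))$, not $\le m$. Saying that the fixed $L$ ``suffices for all $n$'' correctly captures the uniformity of the commutator estimate in $n$, but you still must remove the additive $O(L)$ term. The missing step is to observe that the cluster point $\omega$ can be chosen \emph{independently of $L$}: since you pick $x_n$ so that $A$ matches $\Phi_{\omega_n}(A)$ on $B(x_n,r_n)$ with $r_n\to\infty$ (a choice that does not involve $L$), the sequence $(x_n)$ --- and hence $\omega$ --- is $L$-free; for any given $L$ one then only uses the inner block of radius $1/L+\ppg(A)\le r_n$. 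With this in hand, let $L\to 0$ to conclude $\nu(\Phi_\omega(A))\le m$. Equivalently (and this is how the paper, following \cite[Section~7.2]{vspakula2017metric}, organises it), one diagonalises: take $\delta_n\to 0$, set $s_n=s(\delta_n,M,r)$ from Corollary~\ref{nu local}, choose $x_n$ good for radius $s_n$, and extract a single $\omega$.

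Finally, your proposed reduction of the band-dominated case to the band case is circular. Knowing a minimiser $\omega_k$ for each band approximant $A_k$ gives only a sequence $\omega_k\in\partial X$ with $\nu(\Phi_{\omega_k}(A))\to m$ --- which is exactly the hypothesis you started from, and the ``routine compactness'' you invoke \emph{is} the content of the theorem (the map $\omega\mapsto\nu(\Phi_\omega(A))$ has no obvious semicontinuity on $\partial X$). The correct route is your second alternative: approximate $A$ by a rich band operator within $\delta/3$ (Theorem~\ref{density thm}), apply Proposition~\ref{local norm} to the approximant, and use the $1$-Lipschitz property of $\nu$ together with contractivity of $\Phi_\omega$ to obtain Corollary~\ref{nu local} uniformly in $\omega$; then run the extraction once.
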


The proof is divided into several pieces. First recall that the class $\mathcal C$ of operators on $\linf$ is defined in Section \ref{commt tech}, such that all band operators sit inside $\mathcal C$. Furthermore, we have the following \emph{uniform} version.

\begin{lem}\label{unif commt}
Let $X$ be a space. For any $M,r \geq 0$ and $\epsilon>0$, there exists some $L\geq0$ such that for any $L$-Lipschitz function $f \in C_b(X)_1$ and any band operator $A$ on $\linf$ with propagation at most $r$ and norm at most $M$, we have $\|[A,f]\|<\epsilon$.
\end{lem}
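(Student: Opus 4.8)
The plan is to reduce the statement to a uniform commutator estimate for partial translation operators, using that the decomposition of a band operator of propagation at most $r$ into multiplication-times-partial-translation pieces (Lemma \ref{dec of BO}) involves only $N:=\sup_{x\in X}\sharp B(x,r)$ summands, a quantity depending on $r$ and $X$ but \emph{not} on the particular operator $A$.

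First I would dispose of the degenerate cases: if $r=0$ then every band operator of propagation at most $r$ is a multiplication operator, which commutes with every $f\in C_b(X)$, so $[A,f]=0$ for any choice of $L$; and if $M=0$ then $A=0$. So assume $r,M>0$, and put $N:=\sup_{x\in X}\sharp B(x,r)$, which is finite by bounded geometry. Given a band operator $A$ on $\linf$ with $\ppg(A)\le r$ and $\|A\|\le M$, each matrix coefficient satisfies $\|A_{xy}\|=\|R_xAE_y\|\le\|A\|\le M$, since $R_x:\linf\to E$ and $E_y:E\to\linf$ are contractions; in particular $\sup_{x,y}\|A_{xy}\|\le M$. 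Hence Lemma \ref{dec of BO} provides multiplication operators $f_1,\dots,f_N$ with $\|f_k\|\le M$ and partial translation operators $V_1,\dots,V_N$, induced by bijections $t_k:D_k\to R_k$ with $\sup_x d(x,t_k(x))\le r$, such that $A=\sum_{k=1}^N f_kV_k$.

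Since $f_k$ and $f$ are both multiplication operators they commute, so $[A,f]=\sum_{k=1}^N f_k[V_k,f]$ and therefore $\|[A,f]\|\le M\sum_{k=1}^N\|[V_k,f]\|$. To bound each term I would invoke the explicit formula $(\ref{EQ5})$ with $\varphi=f$: for $v\in\linf$ and $x\in R_k$ one has $([f,V_k]v)(x)=(f(x)-f(t_k^{-1}(x)))\,v(t_k^{-1}(x))$, and it vanishes for $x\notin R_k$. As $d(x,t_k^{-1}(x))\le r$ and $f$ is $L$-Lipschitz with range in the unit ball, $|f(x)-f(t_k^{-1}(x))|\le Lr$, so $\|[V_k,f]v\|_\infty\le Lr\,\|v\|_\infty$, i.e. $\|[V_k,f]\|\le Lr$. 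Combining, $\|[A,f]\|\le NMLr$ for every admissible $A$ and $f$ simultaneously; taking $L:=\epsilon/(2NMr)$ then forces $\|[A,f]\|\le\epsilon/2<\epsilon$, as required.

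There is no serious obstacle here: the only point that needs care is the bookkeeping observation that in the decomposition $A=\sum_{k=1}^N f_kV_k$ both the number $N$ of terms and the bound $\|f_k\|\le\sup_{x,y}\|A_{xy}\|\le\|A\|\le M$ on the coefficients depend only on $r$ and $M$, not on $A$, so that the per-term Lipschitz estimate $\|[V_k,f]\|\le Lr$ assembles into a bound that is uniform over the entire family of band operators under consideration.
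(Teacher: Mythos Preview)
Your proof is correct and follows essentially the same route as the paper's: decompose $A=\sum_{k=1}^N f_kV_k$ via Lemma \ref{dec of BO}, bound each commutator $[V_k,f]$ by $Lr$ using the explicit formula~\eqref{EQ5}, and assemble to get $\|[A,f]\|\le NMLr$. The only cosmetic differences are that you handle the degenerate cases $r=0$, $M=0$ explicitly, you isolate the commutativity of the scalar multiplication $f$ with the $\B(E)$-valued multiplications $f_k$ as a separate step, and you insert an extra factor of $2$ in the choice of $L$ to force the strict inequality $\|[A,f]\|<\epsilon$ (the paper's choice $L=\epsilon/(rMN)$ nominally yields only $\le\epsilon$).
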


\begin{proof}
Let $N=\sup_{x\in X} \sharp B(x,r)$. By Lemma \ref{dec of BO}, there exist multiplication operators $f_1,\ldots,f_N$ with $\|f_k\| \leq \|A\| \leq M$, and partial translation operators $V_1,\ldots,V_N$ of propagation at most $r$ defined by partial translations $t_k: D_k \to R_k$ such that $A=\sum_{k=1}^N f_kV_k$. Note that for any $k=1,\ldots,N$, $f \in C_b(X)$ and $v \in \linf$, we have
\begin{equation*}
\big([f_kV_k,f]v\big)(x)=
\begin{cases}
  ~\big(f(t_k^{-1}x)-f(x)\big)\cdot f_k(x)\big( v(t_k^{-1}x) \big), & x\in R_k; \\
  ~0, & \mbox{otherwise.}
\end{cases}
\end{equation*}
Hence, taking $L=\epsilon/(rMN)$ and for any $L$-Lipschitz function $f \in C_b(X)_1$, we have
$$\|[A,f]\| \leq \sum_{k=1}^N \|[f_kV_k,f]\| \leq NLrM =\epsilon.$$
So we finish the proof.
\end{proof}

The following result asserts the phenomenon of ``\emph{lower norm localisation}", which should be regarded as a dual version of \emph{operator norm localisation} introduced in \cite{chen2008metric}.
\begin{prop}\label{local norm}
Let $X$ be a space. For any $\delta>0$, $M\geq 0$ and $r \geq 0$, there exists $s=s(\delta, M, r) \geq 0$ such that
$$\nu(A|_F) \leq \nu_s(A|_F) \leq \nu(A|_F) + \delta$$
for any $A \in \B(\linf)$ with propagation at most $r$, norm at most $M$, and any $F \subseteq X$.
\end{prop}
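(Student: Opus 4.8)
The plan is to prove the "lower norm localisation" statement by a contradiction/compactness argument, mimicking the way operator norm localisation is derived from Property~A-free hypotheses, but working directly with the uniform commutator estimate of Lemma~\ref{unif commt} rather than with partitions of unity. The left inequality $\nu(A|_F) \le \nu_s(A|_F)$ is trivial, since the infimum defining $\nu_s$ is taken over a smaller set of vectors; so the content is the right inequality. Fix $\delta>0$, $M\ge 0$, $r\ge 0$. I would first apply Lemma~\ref{unif commt} to obtain an $L\ge 0$ such that $\|[A,f]\|<\delta/3$ for every $L$-Lipschitz $f\in C_b(X)_1$ and every band operator $A$ with propagation at most $r$ and norm at most $M$. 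Then set $s:=s(\delta,M,r)$ to be (a suitable constant multiple of) $1/L$; the precise value will fall out of the estimate below.

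Now given such an $A$, a subset $F\subseteq X$, and any unit vector $v\in\ell^\infty_E(F)$, I would produce a unit vector $v'$ with $\diam(\supp v')\le s$ and $\|Av'\|$ not much larger than $\|Av\|$. Since $v\in\linf$, pick $\alpha\in F$ with $\|v(\alpha)\|_E>\|v\|_\infty-\delta'$ for a small auxiliary $\delta'$ (to be absorbed at the end), and choose an $L$-Lipschitz bump function $f\in C_b(X)_1$, supported in the ball $B(\alpha,1/L)$, with $f(\alpha)=1$ and range in $[0,1]$. Put $v':=fv/\|fv\|_\infty$, which is a unit vector supported in $B(\alpha,1/L)\cap F$, hence of diameter at most $2/L=:s$. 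Using $\|fv\|_\infty\ge\|f(\alpha)v(\alpha)\|_E\ge\|v\|_\infty-\delta'$ and the commutator bound,
\[
\|Av'\|_\infty \;=\; \frac{\|A f v\|_\infty}{\|fv\|_\infty}\;\le\;\frac{\|fAv\|_\infty+\|[A,f]v\|_\infty}{\|v\|_\infty-\delta'}\;\le\;\frac{\|Av\|_\infty+\tfrac{\delta}{3}}{1-\delta'}.
\]
Taking the infimum over unit vectors $v\in\ell^\infty_E(F)$ on the right and choosing $\delta'$ small enough that the distortion from the denominator contributes at most $\delta/2$ over the relevant range, this gives $\nu_s(A|_F)\le \nu(A|_F)+\delta$, which is what we want. (A small technical point: $A$ is assumed only to be a bounded operator of propagation at most $r$ and norm at most $M$, not literally a band \emph{operator} in $\BD$; but "propagation at most $r$" already forces $A\in\BD$ with $\sup_{x,y}\|A_{xy}\|\le M$, so Lemma~\ref{unif commt} applies verbatim, and the decomposition $A=\sum_k f_kV_k$ underlying the displayed commutator identity is available.)

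The main obstacle I anticipate is the honest bookkeeping in the last inequality: one needs the single constant $s$ to depend only on $(\delta,M,r)$ and not on $A$ or $F$, which is exactly what Lemma~\ref{unif commt} buys us, so the real care is in tracking how the auxiliary parameter $\delta'$ (used to approximate the sup-norm of $v$ by a single coordinate, a step that uses the $\ell^\infty$ structure crucially) interacts with the denominator $\|fv\|_\infty$ and with $\|v\|_\infty$ itself; since $\nu(A|_F)\le M$ is a priori bounded, one can choose $\delta'$ uniformly. A secondary subtlety is that the bump function $f$ must genuinely be $L$-Lipschitz on all of $X$ with the stated support; since $X$ is a metric space, the function $x\mapsto\max\{0,\,1-L\,d(x,\alpha)\}$ does the job, and it is supported in $B(\alpha,1/L)$ with $f(\alpha)=1$, so no extra hypothesis on $X$ is needed here.
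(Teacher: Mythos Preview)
Your proposal is correct and follows essentially the same approach as the paper: pick a near-witness $v$ for $\nu(A|_F)$, localise it with an $L$-Lipschitz bump $f$ centred at a coordinate where $\|v(\cdot)\|_E$ nearly attains $\|v\|_\infty$, and use Lemma~\ref{unif commt} to control $\|[A,f]\|$; the paper takes $\epsilon=\delta/4$, $s=2/L$, and chooses the near-maximum point with threshold $1-\delta/(2\nu_s(A|_F))$ so that the inequality $\|Afv\|\ge\nu_s(A|_F)\|fv\|$ yields the result without ever dividing by $\|fv\|$, whereas you normalise $fv$ and absorb the resulting denominator via the a priori bound $\nu(A|_F)\le M$---a cosmetic difference only. (Your opening mention of a ``contradiction/compactness argument'' is a slight misdescription of what you actually do, which is direct.)
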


\begin{proof}
The first inequality is trivial. For the second, fix $\delta > 0$, $M,r \geq 0$ and $F \subseteq X$. By Lemma \ref{unif commt}, there exists $L=L(\delta, r, M)>0$ satisfying the requirement thereby for $\epsilon=\delta/4, M$ and $r$. Set $s=2/L$ and take a unit vector $v \in \ell^\infty_E(F)$ such that $\|Av\| \leq \nu(A|_F)+ \delta/4$, and $x_0 \in F$ such that $\|v(x_0)\|>1-\delta/(2\nu_s(A|_F))$. Also take an $L$-Lipschitz function $f\in C_b(X)_1$, supported in $B(x_0,1/L)$ and $f(x_0)=1$. Hence,
$$\|fv\| \geq \|f(x_0)v(x_0)\| > 1-\frac{\delta}{2\nu_s(A|_F)},$$
and $fv\in \ell^\infty_E(F)$ has support in $B(x_0,1/L) \cap F$. Furthermore, we have
$$\|Av\| \geq \|fAv\| \geq \|Afv\|-\|[A,f]v\| \geq \nu_s(A|_F)\|fv\|-\frac{\delta}{4} \geq \nu_s(A|_F) - \frac{\delta}{2}-\frac{\delta}{4}.$$
Combine them together, we have
$$\nu_s(A|_F) \leq \|Av\|+\frac{3\delta}{4} \leq \nu(A|_F) + \frac{\delta}{4}+\frac{3\delta}{4} = \nu(A|_F) + \delta.$$
So we finish the proof.
\end{proof}

Consequently, we have the following corollary by the same proof as \cite[Corollary 7.10]{vspakula2017metric}, replacing \cite[Proposition 7.6]{vspakula2017metric} with Proposition \ref{local norm} above, hence omitted.

\begin{cor}\label{nu local}
Let $X$ be a space and $A$ be a band-dominated operator on $\linf$. Then for any $\delta>0$, there exists $s>0$ such that
$$\nu(\Phi_\omega(A)|_F) \leq \nu_s(\Phi_\omega(A)|_F) \leq \nu(\Phi_\omega(A)|_F) + \delta$$
for all $\omega \in \partial X$ such that $A$ is rich at $\omega$, and all $F \subseteq X(\omega)$.
\end{cor}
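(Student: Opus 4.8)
The plan is to reduce the band-dominated operator $A$ to a band operator, apply Proposition \ref{local norm} on the limit space $X(\omega)$ (the only place a bound on the propagation is needed), and then absorb the difference by a perturbation estimate, all uniformly in $\omega$. Fix $\delta>0$. Rather than invoking Theorem \ref{density thm} abstractly, I would use the explicit approximants of Lemma \ref{app}: fixing the metric $1$-partition of unity used there, the operators $M_n(A)$ are band operators with $M_n(A)\to A$ in norm, so I can pick $n$ with $\|A-M_n(A)\|<\delta/4$ and set $A':=M_n(A)$, $r:=\ppg(A')$, $M:=\|A'\|\le\|A\|$. The crucial features are that $r$ and $M$ depend only on $A$ and $n$, hence not on $\omega$, and that $A'$ is rich at $\omega$ whenever $A$ is: each matrix coefficient of $A'$ has the form $c(x,y)A_{xy}$ for a bounded scalar function $c$ on $X\times X$, and along any ultrafilter an ultralimit of bounded scalars exists, so $\lim_{x\to\omega}A'_{t(x)s(x)}$ exists whenever $\lim_{x\to\omega}A_{t(x)s(x)}$ does.

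Now fix $\omega\in\partial X$ at which $A$ is rich. Then $A-A'$ is rich at $\omega$ as well, and since $\Phi_\omega$ is linear and contractive (Proposition \ref{limit op homo}(1),(3)) we get $\|\Phi_\omega(A)-\Phi_\omega(A')\|=\|\Phi_\omega(A-A')\|\le\|A-A'\|<\delta/4$, while by Proposition \ref{limit op homo}(2) the operator $S:=\Phi_\omega(A')$ on $\ell^\infty_E(X(\omega))$ is a band operator with $\ppg(S)\le r$ and $\|S\|\le M$. Applying Proposition \ref{local norm} to the space $X(\omega)$ with parameters $\delta/2$, $M$, $r$ is legitimate, since $X(\omega)$ is a space whose bounded-geometry constant at scale $r$ is at most that of $X$; thus the resulting $s=s(\delta/2,M,r)$ can be taken uniformly in $\omega$ (and $\nu_s$ only decreases as $s$ grows, so an oversized $s$ does no harm). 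Writing $T:=\Phi_\omega(A)$ and using $\nu\le\nu_s$ together with the elementary estimates $|\nu(U|_F)-\nu(U'|_F)|\le\|U-U'\|$ and $|\nu_s(U|_F)-\nu_s(U'|_F)|\le\|U-U'\|$, for every $F\subseteq X(\omega)$ we obtain
$$\nu(T|_F)\le\nu_s(T|_F)\le\nu_s(S|_F)+\tfrac{\delta}{4}\le\nu(S|_F)+\tfrac{3\delta}{4}\le\nu(T|_F)+\delta,$$
which is precisely the assertion.

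The only non-formal step is the first one: one needs a band-operator approximant of $A$ that is simultaneously norm-close to $A$, rich at every relevant $\omega$, and of propagation and norm bounded independently of $\omega$. Theorem \ref{density thm} on its own supplies the first two properties but leaves the propagation $\omega$-dependent; the remedy is to read off all three properties from the explicit construction of $M_n$ in Lemma \ref{app}, as above. Everything after that is the routine perturbation bookkeeping of \cite[Corollary 7.10]{vspakula2017metric}, with Proposition \ref{local norm} playing the role of \cite[Proposition 7.6]{vspakula2017metric}.
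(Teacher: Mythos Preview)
Your argument is correct and is precisely the approach the paper intends: it is the proof of \cite[Corollary~7.10]{vspakula2017metric} with Proposition~\ref{local norm} substituted for \cite[Proposition~7.6]{vspakula2017metric}. Your explicit use of the approximants $M_n(A)$ from Lemma~\ref{app}, together with the observation that $(M_n(A))_{xy}=c(x,y)A_{xy}$ for a bounded scalar $c$ (so that richness at $\omega$ is inherited and the propagation bound is $\omega$-independent), is exactly the content hidden in the reference to \cite[Theorem~6.6]{vspakula2017metric}; this is the right way to secure the uniformity of $s$ in $\omega$.
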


Using Corollary \ref{nu local} instead of \cite[Corollary 7.10]{vspakula2017metric}, now the rest of the proof for Theorem \ref{bdd below thm} follows exactly the same as that of \cite[Theorem 7.3]{vspakula2017metric}, occupying \cite[Section 7.2]{vspakula2017metric}. Hence we omit the rest of the proof. Also notice that \cite[Corollary 7.10]{vspakula2017metric} is the only place where Property A is used to prove \cite[Theorem 7.3]{vspakula2017metric}, so it is unnecessary for Theorem \ref{bdd below thm}.

Consequently, we obtain Theorem \ref{main thm}, ``(5) $\Leftrightarrow$ (4), $p=\infty$". Next we deal with the remained case of $p \in \{0,1\}$, and we need the following extension result:
\begin{lem}[Lemma 3.18, \cite{chandler2011limit}]\label{ext lem}
Every $A \in \Sz$ has a unique extension to an operator $\hat{A} \in \Sinf$. It holds that $\|\hat{A}\|=\|A\|$, and if $A \in \Lz$, $\Kz$ or $\Az$, then $\hat{A} \in \Linf$, $\Kinf$ or $\Ainf$, respectively.
\end{lem}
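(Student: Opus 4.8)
The plan is to build $\hat A$ by hand from the matrix coefficients of $A$, and then verify the four assertions in turn. The biduality picture (that $\lz^{**}$ contains $\linf$ as the image of $\linf$ under $E\hookrightarrow E^{**}$) is a useful guide, but the explicit matrix construction is cleaner since it sidesteps the distinction between $E$ and $E^{**}$.

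\emph{Construction of $\hat A$.} Fix $A\in\Sz$. For each $x\in X$ one has $\|R_xAQ_F\|=\|P_{\{x\}}AQ_F\|\le\|P_{F'}AQ_F\|$ whenever $x\in F'\in\F$, so the defining property of $\Sz$ (taken with $F'=\{x\}$) gives $\|R_xAQ_F\|\to 0$ as $F\to X$. Now fix $\xi\in\linf$ and $x\in X$, and for $G\in\F$ set $s_G:=\sum_{y\in G}A_{xy}\xi(y)=(A(\chi_G\xi))(x)$, which is legitimate since $\chi_G\xi\in\lz$. For $G\subseteq G'$ one computes $s_{G'}-s_G=(AQ_G(\chi_{G'}\xi))(x)$, hence $\|s_{G'}-s_G\|_E\le\|R_xAQ_G\|\,\|\xi\|_\infty$, so $(s_G)_{G\in\F}$ is Cauchy. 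Define $(\hat A\xi)(x):=\lim_G s_G=\sum_{y}A_{xy}\xi(y)$. Since $\|s_G\|_E\le\|A(\chi_G\xi)\|_\infty\le\|A\|\,\|\xi\|_\infty$ uniformly in $x$ and $G$, passing to the limit yields $\|\hat A\xi\|_\infty\le\|A\|\,\|\xi\|_\infty$, so $\hat A\in\B(\linf)$ with $\|\hat A\|\le\|A\|$.

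\emph{Extension, norm equality, membership in $\Sinf$, uniqueness.} On a finitely supported vector the series defining $\hat A$ is a finite sum equal to the action of $A$; since such vectors are dense in $\lz$ and both $\hat A|_{\lz}$ and $A$ are bounded, $\hat A|_{\lz}=A$, whence $\|A\|\le\|\hat A\|\le\|A\|$. For membership in $\Sinf$, given $F'\in\F$ and $x\in F'$ the same telescoping estimate gives $\|(\hat AQ_F\xi)(x)\|_E\le\|R_xAQ_F\|\,\|\xi\|_\infty\le\|P_{F'}AQ_F\|\,\|\xi\|_\infty$, so $\|P_{F'}\hat AQ_F\|\le\|P_{F'}AQ_F\|\to 0$. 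Uniqueness is then immediate: any $B\in\Sinf$ restricting to $A$ on $\lz$ has the same matrix coefficients as $\hat A$, and operators in $\Sinf$ are determined by their matrix coefficients by Lemma \ref{op to matrix coeff}, so $B=\hat A$.

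\emph{The refinements.} If $A\in\Kz$, then $(\hat AQ_F\xi)(x)=\lim_G(AQ_F(\chi_G\xi))(x)$ together with $\|AQ_F(\chi_G\xi)\|_\infty\le\|AQ_F\|\,\|\xi\|_\infty$ gives $\|\hat AQ_F\|\le\|AQ_F\|\to 0$, and for $x\notin F$ the bound $\|(\hat A\xi)(x)\|_E\le\|Q_FA\|\,\|\xi\|_\infty$ gives $\|Q_F\hat A\|\le\|Q_FA\|\to 0$; hence $\hat A\in\Kinf$. If $A\in\Lz$, the estimate on $\|P_{F'}\hat AQ_F\|$ is already in hand, and since $P_{F'}\xi$ is finitely supported we have $\hat AP_{F'}\xi=AP_{F'}\xi$, so $\|Q_F\hat AP_{F'}\|\le\|Q_FAP_{F'}\|\to 0$; hence $\hat A\in\Linf$. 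If $A\in\Az$, write $A=\lim_nA_n$ in norm with $A_n\in\BD$; each $A_n$ lies in $\Lz\subseteq\Sz$, the map $B\mapsto\hat B$ is linear and isometric on $\Sz$, and $\widehat{A_n}$ coincides (by uniqueness, having the same matrix coefficients and lying in $\Sinf$) with the band operator $A_n$ regarded on $\linf$; thus $\{\widehat{A_n}\}$ is Cauchy in $\Ainf$, and $\|\widehat{A_n}-\hat A\|=\|A_n-A\|\to 0$ identifies its limit as $\hat A$, so $\hat A\in\Ainf$.

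\emph{Main obstacle.} The only non-formal step is the first one: making sense of $\hat A\xi$ for $\xi\in\linf$ and bounding it by $\|A\|\,\|\xi\|_\infty$. This is exactly where the hypothesis $A\in\Sz$ is indispensable — it is the condition $\|P_{\{x\}}AQ_F\|\to 0$ that forces the row series to converge, and without some such sequential-continuity property a bounded operator on $\lz$ need not extend to $\linf$ at all. Everything afterwards is bookkeeping built on the single estimate $\|R_xAQ_F\|\le\|P_{F'}AQ_F\|$ for $x\in F'$ and the fact that $\hat A$ agrees with $A$ on finitely supported vectors.
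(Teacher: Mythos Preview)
The paper does not supply its own proof of this lemma; it is stated with attribution to \cite[Lemma~3.18]{chandler2011limit} and used as a black box. So there is no in-paper argument to compare against, and the relevant question is simply whether your proof is correct. It is.

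Your explicit matrix construction is clean and self-contained: the key point, as you identify, is that the $\Sz$ condition $\|P_{\{x\}}AQ_F\|\to 0$ is exactly what makes each row series $\sum_y A_{xy}\xi(y)$ Cauchy for arbitrary $\xi\in\linf$, and the uniform bound $\|s_G\|_E\le\|A\|\,\|\xi\|_\infty$ gives the norm inequality. The verification that $\hat A\in\Sinf$, the uniqueness via Lemma~\ref{op to matrix coeff}, and the three refinements all go through as written. The $\Az$ case is handled correctly by observing that the extension map is a linear isometry on $\Sz$ and agrees with the tautological inclusion on band operators.

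The bidual route you allude to (embedding $\linf$ into $\lz^{**}$ via $E\hookrightarrow E^{**}$ and restricting $A^{**}$) is the one typically taken in the literature, but as you note it requires checking that $A^{**}$ actually lands back in $\linf$ rather than $\ell^\infty_{E^{**}}(X)$ --- which again comes down to the sequential continuity encoded in $\Sz$. Your direct construction avoids this detour and makes the role of the hypothesis more transparent.
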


\begin{proof}[Proof of Theorem \ref{main thm}, ``(5) $\Leftrightarrow$ (4)"]
The case of $p \in (1,\infty)$ is from \cite[Theorem 5.1, ``(3) $\Rightarrow$ (2)"]{vspakula2017metric}, and the case of $p=\infty$ is a direct corollary of Theorem \ref{bdd below thm}.

For $p=0$, let $A\in \Az$ be a rich operator satisfying condition (5). By Lemma \ref{ext lem} and Proposition \ref{restriction inv}, we may consider its extension $\hat{A} \in \Ainf$ and condition (5) still holds for $\hat{A}$. Since we already proved the result for $p=\infty$, condition (4) holds for $\hat{A}$. By Corollary \ref{res limit}, we know that (4) also holds for $A$ as well.

Finally, we deal with the case of $p=1$. Assume $A$ is a rich band-dominated operator on $\lo$, and that all $B \in \sigma_{op}(A)$ are invertible. Hence all their adjoints are invertible as well, which compose $\sigma_{op}(A^*)$ by Lemma \ref{adj limit op}. From the result for $p=\infty$ proved above, we have that
$$\sup_{B \in \sigma_{op}(A)}\|B^{-1}\| = \sup_{B \in \sigma_{op}(A)}\|(B^*)^{-1}\| = \sup_{C=B^* \in \sigma_{op}(A^*)} \|C^{-1}\| <\infty.$$
So we finish the proof.
\end{proof}

\bibliographystyle{abbrv}
\bibliography{bibLIMIT}

\end{document}